\definecolor{green}{HTML}{2ECC71}
\definecolor{blue}{HTML}{3498DB}
\definecolor{red}{HTML}{E74C3C}
\definecolor{orange}{HTML}{FD6A02}
\def\@endtheorem{\endtrivlist}
\Crefname{paragraph}{\S}{\SS}
\Crefname{equation}{}{}
\Crefname{enumi}{}{}
\Crefname{conditioni}{Condition}{Conditions}
\Crefname{conditionalti}{Condition}{Conditions}
\newtheorem{theorem}{Theorem}[section]
\newtheorem*{theorem*}{Theorem}
\Crefname{theorem}{Theorem}{Theorems}
\Crefname{theoremintro}{Theorem}{Theorems}
\newtheorem{lemma}[theorem]{Lemma}
\Crefname{lemma}{Lemma}{Lemmas}
\newtheorem{proposition}[theorem]{Proposition}
\Crefname{proposition}{Proposition}{Propositions}
\newtheorem{corollary}[theorem]{Corollary}
\Crefname{corollary}{Corollary}{Corollaries}
\Crefname{conjecture}{Conjecture}{Conjectures}
\newtheorem{principle}{Principle}
\theoremstyle{definition}
\Crefname{example}{Example}{Examples}
\newtheorem*{example*}{Example}
\Crefname{assumption}{Assumption}{Assumptions}
\newtheorem{definition}[theorem]{Definition}
\Crefname{definition}{Definition}{Definitions}
\Crefname{question}{Question}{Questions}
\theoremstyle{remark}
\newtheorem{remark}[theorem]{Remark}
\Crefname{remark}{Remark}{Remarks}
\numberwithin{equation}{section} %
\DeclarePairedDelimiter{\paren}{\lparen}{\rparen}
\DeclarePairedDelimiter{\bracket}{\lbrack}{\rbrack}
\DeclarePairedDelimiter{\set}{\lbrace}{\rbrace}
\DeclarePairedDelimiter{\abs}{\lvert}{\rvert}
\DeclarePairedDelimiter{\norm}{\lVert}{\rVert}
\DeclarePairedDelimiterX{\psh}[2]{\langle}{\rangle}{#1, #2}
\DeclarePairedDelimiterX{\pairing}[2]{\langle}{\rangle}{#1 \vert #2}
\DeclarePairedDelimiter{\floor}{\lfloor}{\rfloor}
\DeclarePairedDelimiterXPP{\Exp}[1]{\exp}{\lparen}{\rparen}{}{#1}
\DeclarePairedDelimiterXPP{\Log}[1]{\log}{\lparen}{\rparen}{}{#1}
\DeclarePairedDelimiterXPP{\Inf}[1]{\inf}{\lbrace}{\rbrace}{}{#1}
\DeclarePairedDelimiterXPP{\Sup}[1]{\sup}{\lbrace}{\rbrace}{}{#1}
\DeclarePairedDelimiterXPP{\Max}[1]{\max}{\lbrace}{\rbrace}{}{#1}
\DeclarePairedDelimiterXPP{\Min}[1]{\min}{\lbrace}{\rbrace}{}{#1}
\renewcommand{\L}{\mathsf{L}}
\newcommand{\E}{\esp}
\DeclareMathOperator{\esp}{\mathbf{E}}
\DeclareMathOperator{\prob}{\mathbf{P}}
\DeclareMathOperator{\var}{\mathbf{Var}}
\DeclareMathOperator{\law}{\mathbf{law}}
\DeclareMathOperator{\cov}{\mathbf{cov}}
\DeclarePairedDelimiterXPP{\Prob}[1]{\prob}[]{}{#1}
\DeclarePairedDelimiterXPP{\Esp}[1]{\esp}[]{}{#1}
\DeclarePairedDelimiterXPP{\Var}[1]{\var}[]{}{#1}
\DeclarePairedDelimiterXPP{\Law}[1]{\law}[]{}{#1}
\DeclarePairedDelimiterXPP{\Cov}[2]{\cov}[]{}{#1, #2}
\author[R.\ Herry]{Ronan HERRY}
\address{IRMAR, Université de Rennes 1}
\email{ronan.herry@univ-rennes.fr}
\urladdr{https://orcid.org/0000-0001-6313-1372}
\author[D.\ Malicet]{Dominique MALICET}
\address{LAMA, Université Gustave Eiffel}
\email{dominique.malicet@univ-eiffel.fr}
\urladdr{https://orcid.org/0000-0003-2768-0125}
\author[G.\ Poly]{Guillaume POLY}
\address{IRMAR, Université de Rennes 1}
\email{guillaume.poly@univ-rennes.fr}
\title[Limit distributions for polynomials]{Limit distributions for polynomials with independent and identically distributed entries}
\begin{document}

\begin{abstract}
  We characterize the limiting distributions of random variables of the form \( P_n\left( (X_i)_{i \ge 1} \right) \), where: \begin{enumerate*}[(i)]
    \item \( (P_n)_{n \ge 1} \) is a sequence of multivariate polynomials, each potentially involving countably many variables;
    \item there exists a constant \( D \ge 1 \) such that for all \( n \ge 1 \), the degree of \( P_n \) is bounded above by \( D \); 
    \item \( (X_i)_{i \ge 1} \) is a sequence of independent and identically distributed random variables, each with zero mean, unit variance, and finite moments of all orders.
  \end{enumerate*}
  More specifically, we prove that the limiting distributions of these random variables can always be represented as the law of \( P_\infty\left( (X_i, G_i)_{i \ge 1} \right) \), where \( P_\infty \) is a polynomial of degree at most \( D \) (potentially involving countably many variables), and \( (G_i)_{i \ge 1} \) is a sequence of independent standard Gaussian random variables, which is independent of \( (X_i)_{i \ge 1} \).

  The characterization of all possible limiting laws of polynomials in independent and identically distributed variables is a long-standing problem, that we trace back at least to Kolmogorov's influential school in Probability in the 1960s.
  The seminal work \cite{Sevastyanov} is the first to solve this problem for $D=2$ and when the $(X_{i})$ are Gaussian.
  There, a diagonalization argument serves as the main analytical tool.
 In contrast, the case of non-Gaussian inputs has been solved only recently:
 \cite{BDM,BMMBernoulli} propose a solution for non-Gaussian quadratic polynomials, \(\sum_{i,j \leq N_n} \alpha_n(i,j) X_i X_j\), where the common law of the $(X_{i})$ is generic but the coefficients \((\alpha_n(i,j))_{i,j \leq N_n}\) form an adjacency matrix.
 This extra assumption enables combinatorial arguments grounded in graph-theoretic techniques. 

We solve this problem in full generality, addressing both Gaussian and non-Gaussian inputs, and with no extra assumption on the coefficients of the polynomials.
In the Gaussian case, our proof builds upon several original tools of independent interest, including a new criterion for central convergence based on the concept of \emph{maximal directional influence}.
Beyond asymptotic normality, this novel notion also enables us to derive quantitative bounds on the degree of the polynomial representing the limiting law.
We further develop techniques regarding asymptotic independence and dimensional reduction.
To conclude for polynomials with non-Gaussian inputs, we combine our findings in the Gaussian case with invariance principles from \cite{MOO}.
\end{abstract}

\maketitle%
\tableofcontents%

\section{Introduction}

\subsection{Main results}  
We establish a complete characterization of the closure, for the topology of convergence in law, of polynomials in independent random variables that are centered, have unit variance, and possess finite moments of all orders.
\subsubsection{Stability in law of polynomials chaoses}
We present below our main result, restricted to the univariate setting.

\begin{theorem}[Stability of polynomial chaoses]\label{th:stabilite-chaos}  
  Let \( d \geq 1 \), and let \((P_n)\) be a sequence of multivariate real polynomials of degree at most \( d \in \mathbb{N} \). Consider \( \vec{X} = (X_i)_{i \geq 1} \), a sequence of independent and identically distributed random variables satisfying \( \Esp{X_1} = 0 \), \( \Esp{X_1^2} = 1 \), and \( \Esp{\abs{X_1}^p} < \infty \) for all \( p \in \mathbb{N} \).
Assume that \( P_n(\vec{X}) \) converges in law to a limit \( \mu \). Then, there exist:  
\begin{itemize}[wide]
    \item \( \vec{G} = (G_i)_{i \geq 1} \), a sequence of independent standard Gaussian random variables, also independent of \( \vec{X} \);
    \item \( P_\infty \), a multivariate polynomial of degree at most \( d \), possibly involving countably many variables,  
\end{itemize}  
such that:  
\begin{equation*}
  \Law*{P_\infty(\vec{X}, \vec{G})} = \mu.
\end{equation*}
\end{theorem}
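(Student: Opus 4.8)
The plan is to reduce the general (non-Gaussian) statement to a Gaussian core statement, via an invariance principle, and then to prove the Gaussian core by a compactness-plus-structure argument. So the first move is to separate the two difficulties: (1) showing that whenever $P_n(\vec X)$ converges, the limiting law admits \emph{some} representation in a fixed number of variables with a polynomial of bounded degree, and (2) upgrading that representation to one of the specific form $P_\infty(\vec X,\vec G)$. For step (1), I would first perform a ``dimensional reduction'': replace $P_n$ by a polynomial $\widetilde P_n$ depending on only finitely many (say $N_n$) coordinates, with $\Esp{(P_n-\widetilde P_n)(\vec X)^2}\to0$, using that each $P_n$ has bounded degree so its high-index contribution can be truncated; then normalize (subtract the mean, and if the variance does not degenerate, rescale). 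The key structural object will be, following the abstract's hint, the notion of \emph{maximal directional influence}: the largest over $i$ of the $L^2$-norm of the ``$i$-th directional derivative'' (discrete or Malliavin, depending on whether the entries are Gaussian) of the normalized polynomial.

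For the Gaussian core, the strategy I would pursue is a dichotomy driven by this influence quantity. If along a subsequence the maximal directional influence tends to $0$, then the new central limit criterion announced in the abstract applies and $\mu$ is Gaussian, which is trivially of the desired form $P_\infty(\vec X,\vec G)=\sum c_iG_i$. If instead the maximal directional influence stays bounded away from $0$, then there is a distinguished variable, say $X_{i_n}$ (after relabelling, $X_1$), that carries a nonnegligible portion of the ``energy''; I would then write $P_n = \sum_{k=0}^{d} X_1^{\,k}\,Q_{n,k}$ with $Q_{n,k}$ not involving $X_1$, extract convergence of the joint law of the finitely many relevant lower-degree pieces $(Q_{n,k}(\vec X))_k$ (each of degree $\le d$ and, crucially, \emph{strictly lower} total degree in the recursion variable sense that powers some induction), and conclude by an induction on $d$ that each limiting $Q_{\infty,k}$ has the claimed form $Q_{\infty,k}(\vec X,\vec G)$; reassembling gives $P_\infty(\vec X,\vec G)=\sum_k X_1^k Q_{\infty,k}(\vec X,\vec G)$. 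The quantitative bound on the degree of $P_\infty$ comes from the fact that each peeling step either terminates in the Gaussian (degree-$1$) case or strictly decreases the degree, so no more than $d$ steps occur.

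The reduction of the general case to the Gaussian one is where I would invoke \cite{MOO}: by the invariance principle, if $P_n$ has low influences then $P_n(\vec X)$ and $P_n(\vec G')$ (with $\vec G'$ an auxiliary i.i.d.\ Gaussian sequence) are close in distribution; on the high-influence part one isolates the distinguished variable $X_1$ \emph{keeping its own law}, and runs the same peeling, so that at the end the ``surviving'' genuinely non-Gaussian inputs are exactly finitely many copies of the $X_i$ while the low-influence remainders have been swapped for Gaussians $G_i$. Making this simultaneous bookkeeping — which coordinates stay as $X_i$ and which become $G_i$, uniformly along the peeling recursion — precise is the delicate part: one needs the invariance principle in a quantitative, multilinear form that can be applied coordinate-by-coordinate and remains valid after conditioning on the distinguished variables.

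The main obstacle, I expect, is the Gaussian core central limit criterion itself — proving that vanishing maximal directional influence forces asymptotic normality of a \emph{fixed-degree but otherwise arbitrary} polynomial chaos, with no spectral/combinatorial structure on the coefficients. Classical fourth-moment-type theorems handle fixed Wiener chaoses, but here the polynomial mixes chaoses of all orders $\le d$ and the coefficients are unconstrained, so the usual contraction-norm estimates do not obviously close; the innovation must be a way of controlling all relevant contractions by the single scalar ``maximal directional influence,'' presumably via a hypercontractivity or interpolation argument combined with an iteration over the degree. A secondary obstacle is ensuring the various compactness extractions (of $N_n$, of the distinguished index, of the laws of the peeled pieces, of the coefficient sequences) can be carried out consistently so that the final $P_\infty$ genuinely involves only countably many variables and has degree $\le d$.

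\medskip

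\noindent
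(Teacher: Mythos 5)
There is a genuine gap at the heart of your Gaussian core, namely the dichotomy you build on. You take the ``maximal directional influence'' to be the largest over coordinates $i$ of the $L^2$-norm of the $i$-th derivative, and claim that if it vanishes then the new CLT criterion gives a Gaussian limit, and that otherwise a single distinguished variable $X_1$ can be peeled off via $P_n=\sum_k X_1^k Q_{n,k}$. Both horns fail. Vanishing coordinate-wise influence does \emph{not} force asymptotic normality for fixed-degree polynomials: already $F_n=\bigl(n^{-1/2}\sum_{k\le n}G_{2k}\bigr)\bigl(n^{-1/2}\sum_{k\le n}G_{2k+1}\bigr)$ has all coordinate influences tending to $0$ but converges to a product of two independent Gaussians; your argument would wrongly conclude the limit is Gaussian (the low-influence MOO hypothesis only lets you swap $\vec X$ for $\vec G$, it does not make the limit normal). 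Symmetrically, failure of normality does not produce a distinguished \emph{coordinate}: in the example above no single variable carries macroscopic energy, so there is nothing to peel in your scheme. The paper's influence $\rho_q(F)=\sup\{\|\Gamma(F,X)\|_{2}:X\in\mathcal{W}_q,\ \|X\|_2\le 1\}$ is a supremum over the whole Wiener chaos of order $q$ (so the influential ``direction'' is itself a possibly infinite polynomial, e.g.\ $n^{-1/2}\sum G_{2k}$ above), and the correct criterion is the vanishing of $\rho_{\lfloor p/2\rfloor}$ --- Remark 3.10 of the paper even shows that $\rho_1\to 0$ is insufficient for degree $\ge 4$, so directions of higher chaotic order are genuinely needed, something absent from your proposal. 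Accordingly, the paper peels off Hermite polynomials $H_l(X_n)$ in an asymptotically Gaussian chaotic direction $X_n\in\mathcal{W}_q$ of strongest influence, not powers of a coordinate.

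A second, related problem is your degree-counting claim that ``no more than $d$ steps occur.'' In the correct peeling the remainder $A_{n,0}$ stays in the top chaos $\mathcal{W}_p$ after each step; what decreases is its norm (by a fixed amount $c_{p,q}\delta$ only while the influence stays bounded below), and in general one must iterate the decomposition an unbounded number of times and then choose a slowly growing truncation level $k_n\to\infty$ (the paper's Lemmas 3.19--3.20 and the analytical Lemma 2.5) so that the remainder becomes asymptotically Gaussian and the infinite series of peeled pieces can be controlled by asymptotic orthogonality and a Wasserstein truncation. Your outer architecture --- reduce to the Gaussian case by the MOO invariance principle, separate high-influence coordinates that keep the law of $X_i$ from low-influence parts that become Gaussian, and induct on the degree --- does match the paper's treatment of the non-Gaussian layer, but without a correct Gaussian stability theorem (which is exactly the hard part you flag as the ``main obstacle'') the proof does not close.
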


As a special case, we investigate polynomials in Gaussian variables, that is \( \Law{X_1} = \gamma \).
In this setting, we derive the following result, which extends a prior result known only for \( d \leq 2 \) (see \cite{Sevastyanov}).

\begin{theorem}[Stability of Wiener chaoses]\label{th:stabilite-wiener}  
Let \( \vec{G} = (G_i) \) be a sequence of independent standard Gaussian random variables. The set  
\begin{equation*}
    \set*{ P(\vec{G}) : P \text{ is a polynomial of degree at most \( d \)} }
\end{equation*}
is closed for the topology of convergence in distribution.
\end{theorem}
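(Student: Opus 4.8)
This is the Gaussian instance of Theorem~\ref{th:stabilite-chaos} (take $\vec X = \vec G$ and merge $(\vec G,\vec G')$ into a single i.i.d. Gaussian sequence), and it is also the analytic core on which the general theorem rests, the non-Gaussian case being reached from it via the invariance principle of \cite{MOO}. I describe the direct argument. Identify a ``polynomial of degree at most $d$ in countably many coordinates of $\vec G$'' with an element of the truncated Wiener chaos $\mathcal C_d := \bigoplus_{k=0}^d \mathcal H_k$, where $\mathcal H_k$ is the $k$-th chaos of the Gaussian Hilbert space generated by $\vec G$; the claim is that the image of $\mathcal C_d$ in the space of probability laws on $\mathbb R$ is closed for convergence in distribution. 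Fix $F_n \in \mathcal C_d$ with $F_n \to \mu$ in law; one must exhibit $F_\infty \in \mathcal C_d$ with $\Law{F_\infty} = \mu$. The case $d = 0$ is trivial, so assume $d \ge 1$.

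\textbf{Step 1 (reduction to a bounded sequence).} After centering I would first establish $\sup_n \var(F_n) < \infty$ along a subsequence. Convergence in law makes $\{F_n\}$ tight, so if $\sigma_n^2 := \var(F_n) \to \infty$ then $F_n/\sigma_n \to 0$ in probability while $F_n/\sigma_n \in \mathcal C_d$ has unit variance; this contradicts a uniform anticoncentration estimate on unit-variance elements of $\mathcal C_d$ (hypercontractivity together with a uniform lower bound on their fourth moments). One may then write $F_n = \sum_{k=0}^d I_k(f_{n,k})$ with the symmetric kernels $f_{n,k}$ bounded in $L^2$, and hypercontractivity gives uniform $L^p$ bounds, so all moments converge along subsequences and $\mu$ has finite moments of every order.

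\textbf{Step 2 (core/remainder decomposition).} Simply extracting weak limits of the kernels fails: $f_{n,2} = e_n \otimes e_n \rightharpoonup 0$ yet $I_2(f_{n,2}) = G_n^2 - 1 \to \chi^2_1 - 1$. The obstruction is a coordinate whose \emph{directional influence} $\mathrm{Inf}_i(F_n) := \esp\bigl[\var\bigl(F_n \mid (G_j)_{j \ne i}\bigr)\bigr] \asymp \sum_k k\,\|f_{n,k}(i,\cdot)\|^2$ does not vanish. Since $\sum_i \mathrm{Inf}_i(F_n) \le d\,\var(F_n) = O(1)$, for every $\varepsilon > 0$ at most $O(1/\varepsilon)$ coordinates carry influence $\ge \varepsilon$, and a diagonal extraction over $\varepsilon \downarrow 0$ isolates a countable set $S$ of persistently influential coordinates. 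I would then split $F_n = C_n + R_n$, with $C_n$ essentially the part of $F_n$ carried by $(G_i)_{i \in S}$ and $R_n$ its complement, arranged so that $\max_i \mathrm{Inf}_i(R_n) \to 0$. On the fixed countable family $(G_i)_{i \in S}$ the competing notions — convergence in law, weak convergence of kernels, and the $L^2$-chaos structure — can be reconciled (the dimensional-reduction step), yielding after a further subsequence $C_n \to C_\infty$ in law with $C_\infty \in \mathcal C_d$. \textbf{The main obstacle} is carrying out this decomposition so that $C_n$ genuinely converges in law without breaking its chaos structure and so that $C_n$ and $R_n$ become \emph{asymptotically independent}: this is exactly where the asymptotic-independence and dimensional-reduction machinery announced in the abstract is needed, and it is the delicate heart of the argument.

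\textbf{Step 3 (the remainder is asymptotically Gaussian, and conclusion).} By the new central-convergence criterion based on maximal directional influence, $\max_i \mathrm{Inf}_i(R_n) \to 0$ forces every subsequential limit law of $R_n$ to be Gaussian; combined with the $L^2$-bound from Step~1, $R_n \to N(0,\sigma^2)$ in law along a subsequence, with $\sigma^2 = \lim \var(R_n) \ge 0$. Using the asymptotic independence of $C_n$ and $R_n$, $F_n = C_n + R_n$ converges in law to $\Law{C_\infty + \sigma G_0}$ with $G_0$ a standard Gaussian independent of $C_\infty$ (drawn from $\vec G$). Hence $\mu = \Law{C_\infty + \sigma G_0}$, and since $C_\infty \in \mathcal C_d$ while $\sigma G_0 \in \mathcal H_1 \subseteq \mathcal C_d$ (here $d \ge 1$ is used), the sum $F_\infty := C_\infty + \sigma G_0$ lies in $\mathcal C_d$; that is, $\mu$ is represented by a polynomial of degree at most $d$ in countably many Gaussian variables. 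This shows $\mu$ belongs to the set, so the set is closed, and the same bookkeeping gives the quantitative statement that the representing polynomial never has degree exceeding $d$.
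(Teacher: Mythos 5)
There is a genuine gap, and it is fatal to the proposed route: you have conflated the paper's \emph{directional influence} with the classical \emph{coordinate} influence of \cite{MOO}. The quantity the paper uses is $\rho_q(F)=\sup\{\|\Gamma(F,X)\|_{L^2}: X\in\mathcal{W}_q,\ \Esp{X^2}=1\}$, a supremum over all unit-norm elements of the $q$-th Wiener chaos, and the normality criterion (\cref{th:wiener:normal-convergence}) requires $\rho_{\lfloor p/2\rfloor}(F_n)\to 0$, i.e.\ testing against \emph{all} directions of chaotic degree up to $\lfloor p/2\rfloor$. Your Step 3 instead invokes ``$\max_i \mathrm{Inf}_i(R_n)\to 0$ forces Gaussian subsequential limits,'' with $\mathrm{Inf}_i$ the coordinate influence. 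That statement is false already in degree $2$: take $R_n=\tfrac{1}{\sqrt{2}\,n}\sum_{i\neq j\le n}G_iG_j$, which has unit variance, every coordinate influence of order $1/n$, and converges in law to $(N^2-1)/\sqrt 2$ with $N\sim\mathcal{N}(0,1)$ — not Gaussian. In this example your set $S$ of persistently influential coordinates is empty, $C_n$ is trivial, and your argument would wrongly conclude that $\mu$ is Gaussian. Coordinate influences are relevant in this paper only for the invariance-principle step from general inputs to Gaussian inputs; for Gaussian inputs they carry no information about asymptotic normality. Moreover, even the stronger requirement $\rho_1(F_n)\to 0$ (supremum over first-chaos directions, which already dominates all coordinate influences) is insufficient for degree $\ge 4$, as the paper notes in \cref{rem-clt-3d} with $F_n=(n^{-1/2}\sum_k H_2(G_{2k}))(n^{-1/2}\sum_k H_2(G_{2k+1}))$.

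Because of this, the core/remainder split of Step 2 cannot be repaired by choosing $S$ more cleverly: the obstruction to normality is not localized on coordinates but on low-degree chaos directions, which is precisely why the paper, when $F_n$ is not asymptotically normal, picks a direction of strongest influence $X_n\in\mathcal{W}_q$ realising $\rho_q(F_n)$ (shown to be asymptotically Gaussian in \cref{th:wiener:decomposition:strongest-influence-gaussian}), factors it out via $F_n=\sum_{l}A_{n,l}H_l(X_n)+A_{n,0}$ with coefficients asymptotically independent of $X_n$ and asymptotically chaotic of lower order (\cref{th:wiener:decomposition:once}), iterates this along a slowly growing number of steps so the remainder becomes asymptotically Gaussian (\cref{th:wiener:decomposition:remainder-gaussian}), and then inducts on the degree, handling the infinite series of pieces by orthogonality, truncation and Wasserstein control. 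Your ``dimensional reduction onto the countable set $S$'' is also essentially circular — the original problem is already about a countable family of coordinates, so convergence in law of the part carried by $(G_i)_{i\in S}$ to an element of $\mathcal{C}_d$ is the very statement being proved, not something weak convergence of kernels on $S$ delivers. In short, Steps 2 and 3 rest on a false central-limit criterion and on an unproved reduction, so the proposal does not establish the theorem.
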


In \cref{th:stabilite-general-multidim}, we actually derive a stronger result than \cref{th:stabilite-chaos} in two ways:
\begin{enumerate}[(i)]
  \item we obtain a multivariate version of the theorem, that is we can consider polynomial random vectors $(P_{j}(\vec{X}))_{j \in \mathbb{N}}$ where each $P_{j}$ is a polynomial of degree at most $d$;
  \item the $X_{i}$'s are not necessarily identically distributed, we only need to assume that they have finite moments and that $\set*{ \Law{X_{i}} : i \in \mathbb{N} }$ is finite.
\end{enumerate}
Similarly, in \cref{th:stabilite-wiener-multidim}, we state the multivariate counterpart of \cref{th:stabilite-wiener}.

\subsubsection{Directional influences}
To prove these two results, we introduce a novel quantity, the \emph{directional influence of order \( k \)}.
For simplicity, in this introduction, we restrict the presentation of this object to homogeneous polynomials without \textit{diagonal terms} with Gaussian entries, and involving finitely many variables, which suffices to illustrate the essence of our results.
Namely, for \( F \), a homogeneous polynomial of degree \( d \) of the form  
\begin{equation*}
F = \sum_{i_1 < \cdots < i_d \leq K} a_{i_1, \ldots, i_d} G_{i_1} \cdots G_{i_d},
\end{equation*}
writing $\partial_{G_{i}}$ for the partial derivate with respect to $G_{i}$, we define the semi-norms:  
\begin{equation*}
\tilde{\rho}_k(F) \coloneq  \Sup*{ \norm*{ \sum_{i \geq 1} \partial_{G_i} F \, \partial_{G_i} X }_{L^{2}} : X = \sum_{i_1 < \cdots < i_d \leq K} b_{i_1, \ldots, i_d} G_{i_1} \cdots G_{i_d},  \norm{X}_{L^{2}} \leq 1 }, \qquad k \in \mathbb{N}.
\end{equation*}
Using this new framework, we establish a necessary and sufficient criterion for central convergence, which plays a pivotal role in our analysis.

\begin{theorem}[Asymptotic normality from directional influences]\label{th:normal-convergence}  
Let \( d \geq 1 \) and consider a sequence
\begin{equation*}
  F_n \coloneq \sum_{i_1 < \cdots < i_d} a_n(i_1, \ldots, i_d) G_{i_1} \cdots G_{i_d}, \qquad n \in \mathbb{N}.
\end{equation*}
Then, the following are equivalent:
\begin{enumerate}[(i)]
  \item $(F_{n})$ is asymptotically normal.
  \item For some $k \in \set*{ \floor*{\frac{d}{2}}, \dots, d-1}$, we have
\begin{equation*}
 \tilde{\rho}_k(F_n) \to 0.
\end{equation*}
\end{enumerate}
\end{theorem}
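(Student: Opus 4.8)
The plan is to prove \cref{th:normal-convergence} by working in the framework of Wiener chaos and the fourth-moment/Malliavin–Stein machinery, with the directional influences $\tilde{\rho}_k$ playing the role of a refined, ``directional'' version of the contraction norms $\|f \otimes_r f\|$. First I would recall that a sequence $(F_n)$ of elements in a fixed chaos $\mathcal{H}_d$ with $\var F_n \to \sigma^2 > 0$ is asymptotically normal if and only if all the intermediate contractions vanish, i.e.\ $\|f_n \otimes_r f_n\| \to 0$ for every $r \in \{1,\dots,d-1\}$ (Nualart–Peccati, plus Nualart–Ortiz-Latorre), and that by standard interpolation one may restrict attention to the single value $r = \lfloor d/2 \rfloor$, or more precisely that the collection $\{\|f_n\otimes_r f_n\| : 1 \le r \le d-1\}$ tending to zero is controlled by just the $r$'s in the upper half $\{\lfloor d/2\rfloor,\dots,d-1\}$. (I would also need the easy, but slightly annoying, normalization issue: if $\var F_n$ does not converge, one passes to subsequences; and if $\var F_n \to 0$ the statement is trivial. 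So WLOG $\var F_n \to \sigma^2 > 0$.)

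The heart of the argument is to identify $\tilde{\rho}_k(F_n)$, up to universal constants, with a contraction-type quantity. The operator $X \mapsto \sum_i \partial_{G_i} F \, \partial_{G_i} X$ is, up to a shift, the Ornstein–Uhlenbeck-type bilinear ``carré du champ'' operator $\Gamma(F,\cdot)$; when $F$ and $X$ both lie in the $d$-th chaos, the $L^2$-norm of $\Gamma(F,X)$ decomposes over chaoses, and its projection onto each chaos $\mathcal{H}_{2d-2p}$ (for $p = 1,\dots,d$) is governed precisely by the contraction $f_n \otimes_p f_n$ paired against the kernel of $X$. Taking the supremum over $\|X\|_{L^2} \le 1$ in the $d$-th chaos therefore extracts, roughly, $\max_{p} \|f_n \otimes_p f_n\|$ restricted to those $p$ for which the resulting object still has a component in $\mathcal{H}_d$ — which is exactly where the constraint $k \ge \lfloor d/2\rfloor$ comes from, since $2d - 2p = d$ forces $p = d/2$ and one needs $p$ in the upper range for the pairing to be non-degenerate. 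Thus I expect to prove a two-sided bound
\[
  c \, \tilde{\rho}_k(F_n)^2 \;\le\; \sum_{r = \text{(some explicit range depending on }k)} \|f_n \otimes_r f_n\|^2 \;\le\; C \, \tilde{\rho}_k(F_n)^2,
\]
from which the equivalence in the theorem follows by combining with the fourth-moment theorem and the interpolation among contractions.

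The main obstacle, I expect, is precisely the bookkeeping of which contractions $f_n\otimes_r f_n$ are ``seen'' by $\tilde{\rho}_k$ for a given $k$, and the verification that the supremum over test kernels $X$ genuinely recovers the full contraction norm rather than some partial projection — in particular that one does not lose information by restricting $X$ to the same chaos $\mathcal{H}_d$ rather than allowing all chaoses. Handling this cleanly likely requires choosing the near-optimal test kernel $X$ explicitly in terms of an iterated contraction of $f_n$ with itself (so that $\Gamma(F_n, X)$ has a large $\mathcal{H}_d$-component proportional to $\|f_n \otimes_r f_n\|^2$), and then checking that this $X$ has the required $L^2$-norm and symmetry. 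A secondary technical point is the passage from the genuinely infinite-dimensional setting (countably many $G_i$) and non-homogeneous polynomials with diagonal terms back to the clean homogeneous finite-variable picture above: this is done by the standard reduction (Hermite-rank decomposition into homogeneous chaos components, plus a truncation/approximation argument), but it must be carried out so that the directional influences behave well under it, which I would isolate as a preliminary lemma.
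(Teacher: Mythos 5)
Your route is genuinely different from the paper's: the paper never introduces contractions, proving the equivalence intrinsically through the carré du champ, the algebraic identity of \cref{th:algebraic-lemma}, and an iterated integration by parts over the derivatives $\partial_{i_1,\dots,i_{r_0}}F_n$ (which is also what gets re-used later for \cref{th:wiener:gamma-asymptotic-chaos} and \cref{th:super-lemme}), whereas you go through the Nualart--Peccati/Nualart--Ortiz-Latorre contraction criterion. The contraction route can be made to work, but your pivotal step has a genuine gap as stated. The claimed two-sided bound $c\,\tilde{\rho}_k(F_n)^2\le\sum_r\norm{f_n\otimes_r f_n}^2\le C\,\tilde{\rho}_k(F_n)^2$ is false: $\tilde{\rho}_k$ is an operator-norm-type quantity while contraction norms are Hilbert--Schmidt-type, so they cannot be comparable with matching exponents. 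Already for $d=2$, where $\tilde{\rho}_1(F_n)\asymp\norm{A_n}_{\mathrm{op}}$ and $\norm{f_n\otimes_1f_n}\asymp\norm{A_n^2}_{\mathrm{HS}}$ for the coefficient matrix $A_n$, a spectrum consisting of one eigenvalue $n^{-1/4}$ and $n$ eigenvalues of order $n^{-1/2}$ (normalised to unit variance) gives $\tilde{\rho}_1(F_n)^2\asymp n^{-1/2}$ but $\norm{f_n\otimes_1f_n}^2\asymp n^{-1}$, killing the left-hand inequality. Moreover, your explanation of the threshold $\lfloor d/2\rfloor$ (``$2d-2p=d$ forces $p=d/2$'') and the sketched single near-optimal kernel with a large $\mathcal{W}_d$-component do not deliver the inequality you actually need for the hard implication (ii)$\Rightarrow$(i): a test kernel in $\mathcal{W}_k$ contributes to $\mathcal{W}_d$ in $\Gamma$ only when $k$ is even, and in the crucial case $k=\lfloor d/2\rfloor$ the self-contractions of $f_n$ you propose to use live in chaoses of order up to $2\lfloor d/2\rfloor>k$, hence are not admissible test kernels.

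What closes the argument along your lines is the following. For the easy direction, $\norm{f_n\otimes_j g}^2=\langle f_n\otimes_{d-j}f_n,\,g\otimes_{k-j}g\rangle\le\norm{f_n\otimes_{d-j}f_n}$ suffices (note the power mismatch, harmless qualitatively). For the hard direction, the reduction to the upper half is not interpolation but the exact identity $\norm{f\otimes_s f}=\norm{f\otimes_{d-s}f}$, so it is enough to control $\norm{f_n\otimes_r f_n}$ for $r\le\lfloor d/2\rfloor$; for such $r$, consider the contraction operator $A_r\colon\mathfrak{H}^{\otimes r}\to\mathfrak{H}^{\otimes(d-r)}$, $g\mapsto f_n\otimes_r g$. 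Since all variables of $g$ are contracted, $f_n\otimes_r g$ is automatically symmetric, so nothing is lost to symmetrisation and $\norm{A_r}_{\mathrm{op}}\le C\rho_r(F_n)\le C\rho_k(F_n)$ by the monotonicity of \cref{th:directional-influence-monotone}; then the operator-norm-to-Hilbert--Schmidt step $\norm{f_n\otimes_r f_n}_{\mathrm{HS}}=\norm{A_rA_r^*}_{\mathrm{HS}}\le\norm{A_r}_{\mathrm{op}}\norm{A_r}_{\mathrm{HS}}=\norm{A_r}_{\mathrm{op}}\norm{f_n}$ gives exactly the needed one-sided bound, and the fourth-moment theorem (after your normalisation/subsequence remark) concludes. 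With these corrections your plan becomes a complete alternative proof of \cref{th:normal-convergence}; it is, however, confined to this criterion, while the paper's intrinsic argument is designed to feed the asymptotic-independence and degree-reduction results that follow.
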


See \cref{th:wiener:normal-convergence} for a precise statement.
With respect to the problem of characterizing all the distributional limits, this criterion enables, starting from a non asymptotically Gaussian polynomials, to factor out polynomials of smaller degrees, allowing for an inductive argument.
Numerous alternative criteria for central convergence exist, most notably:
\begin{enumerate}[(i)]
  \item the celebrated Fourth-Moment Theorem of \citeauthor{NualartPecattiCLT}~\cite{NualartPecattiCLT};
  \item a criterion based Malliavin calculus by \Citeauthor{NualartOrtizLatorre}~\cite{NualartOrtizLatorre};
\item the Second Order Poincaré inequalities developed by \Citeauthor{ChatterjeeSecondOrder}~\cite{ChatterjeeSecondOrder}, and \Citeauthor{NPRPoincare}~\cite{NPRPoincare}.
\end{enumerate}
All the results above give necessary and sufficient conditions for asymptotic normality of homogeneous polynomials without diagonal terms.
Arguably, our criterion is a new contribution in the vast literature of central limit criteria for Gaussian polynomials.
However, directional influences offer new insights in two directions.
\begin{enumerate}[(i)]
  \item For Gaussian limits, we obtain the criterion from \cref{th:normal-convergence}. 
    This plays a crucial role in our proof, we do not know how to derive from the classical criteria.
  \item Beyond Gaussian limits, the semi-norms \( \tilde{\rho}_k \) encode precise information about the distributional convergence of a sequence of polynomials \( (F_n)_{n \geq 1} \), as demonstrated by the following result.
\end{enumerate}

\begin{theorem}\label{th:super-lemme}
  With $(F_{n})$ as above, let \( s \in \set*{ 1,\dots, d-1 } \) and suppose \( \tilde{\rho}_s(F_n) \to 0 \), where
  \begin{equation*}
    F_n \coloneq \sum_{i_1 < \cdots < i_d} a_n(i_1, \ldots, i_d) G_{i_1} \cdots G_{i_d}, \qquad n \in \mathbb{N}.
  \end{equation*}
  Further, assume \( (F_n)_{n \geq 1} \) converges in distribution to \( \mu \). Then there exists a multivariate polynomial \( Q \) such that:  
\begin{equation*}
\Law*{Q(\vec{G})} = \mu, \quad \text{and} \quad \deg(Q) \leq \left\lfloor \frac{d}{s+1} \right\rfloor.
\end{equation*}
\end{theorem}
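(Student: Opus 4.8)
The plan is to argue by induction on the degree $d$, after a reduction to an $L^2$-bounded situation and a split according to the size of $s$. Since $F_n$ lives in the fixed polynomial chaos of order $d$, its convergence in law forces $\sup_n\norm{F_n}_{L^p}<\infty$ for every $p\ge 1$ (hypercontractivity); passing to a subsequence we may assume $\norm{F_n}_{L^2}$ converges, and if it converges to $0$ then $\mu=\delta_0$ and $Q\equiv 0$ works, so assume not. Write $F_n=I_d(f_n)$ for the symmetric kernel $f_n$, so that the hypothesis reads: the directional influence $\tilde\rho_s(F_n)$ of order $s$ tends to $0$. If $s\ge\floor{d/2}$ then $\floor{d/(s+1)}=1$ (as $s\le d-1$), and \cref{th:normal-convergence} applies verbatim: $F_n$ is asymptotically normal, $\mu$ is Gaussian, and $Q$ can be taken linear. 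This settles the base of the induction and the ``large $s$'' regime, so from now on assume $1\le s<\floor{d/2}$.

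The heart is a structural claim: vanishing directional influence of order $s$ forces $F_n$ to \emph{factor}, up to an $L^2$-error tending to $0$, through finitely many chaoses of strictly smaller degree. Precisely, I would show one can write $F_n=R_n\paren*{H_1^{(n)},\dots,H_M^{(n)}}+\varepsilon_n$ with $\norm{\varepsilon_n}_{L^2}\to 0$, where $R_n$ is a polynomial, $M\le\floor{d/(s+1)}$, and $H_1^{(n)},\dots,H_M^{(n)}$ are homogeneous Gaussian chaoses, each of degree at least $s+1$ (hence of degree strictly below $d$), carried by pairwise asymptotically independent subspaces of the Gaussian space, and each still satisfying $\tilde\rho_s\paren*{H_j^{(n)}}\to 0$. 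The bound $M\le\floor{d/(s+1)}$, and the bound on $\deg R_n$, are dictated by homogeneity: the monomials in the $H_j^{(n)}$ that contribute to the degree-$d$ part of $F_n$ involve blocks whose degrees sum to at most $d$, and each block costs degree at least $s+1$. The transfer of the hypothesis to each block goes through the multiplication formula, a contraction of a product of chaoses being controlled by the contractions of the factors; this is where the paper's ``asymptotic independence'' and ``dimensional reduction'' techniques enter — to realize the blocks on asymptotically independent subspaces, and to keep $\varepsilon_n$ uniformly small.

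Granting the decomposition, one concludes as follows. Pass to a subsequence so that $\norm{H_j^{(n)}}_{L^2}\to\sigma_j$ and, discarding indices with $\sigma_j=0$, so that every surviving block is asymptotically nondegenerate and converges in law. Since $\deg H_j^{(n)}<d$ and $\tilde\rho_s\paren*{H_j^{(n)}}\to 0$, the inductive hypothesis gives polynomials $Q_j$ with $\Law*{\lim_n H_j^{(n)}}=\Law*{Q_j(\vec G)}$ and $\deg Q_j\le\floor*{\deg H_j^{(n)}/(s+1)}$; by the asymptotic independence results the family $\paren*{H_1^{(n)},\dots,H_M^{(n)}}$ converges jointly to independent copies of the $Q_j(\vec G)$ on disjoint blocks of Gaussians. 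The coefficients of $R_n$ are bounded (from $\sup_n\norm{F_n}_{L^2}<\infty$ and the nondegeneracy of the surviving blocks), so along a further subsequence $R_n\to R_\infty$ coefficientwise; hypercontractivity gives uniform $L^p$-bounds, hence $F_n=R_n(\vec H^{(n)})+\varepsilon_n$ converges in law to $R_\infty$ evaluated at the independent $Q_j(\vec G)$. This limit is a polynomial in countably many independent Gaussians of degree at most $\sum_j\deg Q_j\le\sum_j\floor*{\deg H_j^{(n)}/(s+1)}\le\floor{(\sum_j\deg H_j^{(n)})/(s+1)}\le\floor{d/(s+1)}$, giving the required $Q$.

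I expect the decomposition step to be by far the main obstacle. Extracting from the soft input $\tilde\rho_s(F_n)\to 0$ an \emph{explicit} representation of $F_n$ as a bounded-degree polynomial in asymptotically independent, lower-degree chaoses — with the number of blocks pinned exactly at $\floor{d/(s+1)}$ and an $L^2$-remainder uniformly small in $n$ — requires a genuine dimensional reduction and tight bookkeeping of chaos degrees through the multiplication formula, so that each block indeed has degree at least $s+1$ and can legitimately be fed back, either into \cref{th:normal-convergence} (when its degree is at most $2s+1$, so that $s\ge\floor*{\deg H_j^{(n)}/2}$) or into the inductive hypothesis. Showing that $\tilde\rho_s(F_n)\to 0$ genuinely \emph{forces} such a splitting — that an ``indecomposable'' degree-$d$ chaos cannot have vanishing order-$s$ directional influence when $s<\floor{d/2}$ — is the crux.
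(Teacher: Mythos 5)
Your reduction of the regime \( s \ge \lfloor d/2\rfloor \) to \cref{th:normal-convergence} is correct, but everything else hinges on the structural claim that \( \tilde\rho_s(F_n)\to 0 \) forces \( F_n = R_n\bigl(H^{(n)}_1,\dots,H^{(n)}_M\bigr) + \varepsilon_n \) with \( \norm{\varepsilon_n}_{L^2}\to 0 \), blocks of degree at least \( s+1 \) each inheriting \( \tilde\rho_s\to 0 \), and — crucially — a \emph{total} number of blocks \( M \le \lfloor d/(s+1)\rfloor \). You leave this unproved (you call it the crux), and as stated it is false. Take \( d=4 \), \( s=1 \), fix \( m \ge 2 \), and set \( F_n \coloneq m^{-1/2}\sum_{j=1}^m A_{n,j}B_{n,j} \), where \( A_{n,j} \coloneq n^{-1/2}\sum_{k=1}^n G^{(j)}_{2k-1}G^{(j)}_{2k} \) and \( B_{n,j} \) is built the same way over further pairwise disjoint coordinates. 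The computation of \cref{rem-clt-3d} gives \( \tilde\rho_1(F_n)\to 0 \), while \( F_n \) converges in law to \( m^{-1/2}\sum_{j=1}^m N_jN_j' \) with \( (N_j,N_j') \) i.i.d.\ standard Gaussians. In your scheme with \( M\le 2 \), each block has degree \( 2 \) or \( 3 \) and \( \tilde\rho_1\to 0 \), hence is asymptotically Gaussian by \cref{th:wiener:normal-convergence}, so the limit would be a polynomial of degree at most \( 2 \) in at most two independent Gaussians; such a variable has at most two nonzero eigenvalues in its quadratic part, whereas the true limit is \( \tfrac{1}{2\sqrt m}\sum_j\bigl(U_j^2 - V_j^2\bigr) \) with \( 2m\ge 4 \) nonzero eigenvalues, and these laws differ. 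So the number of factored-out pieces cannot be bounded by \( \lfloor d/(s+1)\rfloor \): it must be allowed to grow with \( n \), which in turn requires the orthogonality of the pieces, a truncation/exchange-of-limits argument (as in \cref{th:analytical-lemma}), and a proof that the remainder of the iteration is eventually negligible or asymptotically Gaussian — none of which your outline supplies.

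The paper's proof runs differently, and the difference is exactly what produces the degree bound. The induction is on \( d-s \), not on \( d \), and what gets factored out are not blocks inheriting \( \tilde\rho_s\to 0 \) but \emph{directions of strongest influence} \( X_{n,i}\in\mathcal{W}_{q_i} \): these are asymptotically Gaussian (\cref{th:wiener:decomposition:strongest-influence-gaussian}), and \( q_i\ge s+1 \) because the vanishing of \( \rho_r \), \( r\le s \), propagates to the successive remainders (\cref{th:wiener:decomposition:influence}). Iterating \cref{th:wiener:decomposition:once} as in \cref{th:decomposition:iteration} gives \( F_n=\sum_{i\le r_n}\sum_k A^{(i)}_{k,n}H_k(X_{n,i})+R_{n,r_n} \) with \( r_n\to\infty \) and \( \rho_{s+1}(R_{n,r_n})\to 0 \) (\cref{th:wiener:decomposition:remainder-gaussian}). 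The bound \( \lfloor d/(s+1)\rfloor \) then comes from a degree \emph{collapse} in the limit: since \( X_{\infty,i}\in\mathcal{W}_1 \), a term \( A^{(i)}_{k,n}H_k(X_{n,i}) \), of degree \( d \) for every finite \( n \), has a limit of degree at most \( k+\lfloor (d-kq_i)/(s+1)\rfloor\le\lfloor d/(s+1)\rfloor \) because \( q_i\ge s+1 \); the coefficient is handled by the induction hypothesis when \( d-kq_i\ge s+1 \) and is shown to vanish when \( d-kq_i\le s \), and the remainder is handled at level \( s+1 \). Your bookkeeping \( \sum_j\lfloor d_j/(s+1)\rfloor\le\lfloor d/(s+1)\rfloor \) would only be available after the missing decomposition, so the proposal has no route to the stated bound.
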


\begin{remark}
  For $s$ as in \cref{th:normal-convergence}, we have $\floor*{\frac{d}{s+1}} \leq 1$, and thus $Q(\vec{G})$ is Gaussian.
  Thus, \cref{th:super-lemme} contains \cref{th:normal-convergence}.
\end{remark}

\subsection{Motivation}

Early fundamental results by the founding figures of the theory of probability, such as \Citeauthor{Chebyshev}~\cite{Chebyshev}, \Citeauthor{Lindeberg}~\cite{Lindeberg}, \Citeauthor{Kolmogorov}~\cite{Kolmogorov}, \Citeauthor{Levy}~\cite{Levy}, and \Citeauthor{Feller}~\cite{Feller} give a complete characterization of the limits in law of the sum of $n$ independent random variables, as $n \to \infty$.
We refer to the monographs \cite{LevyBook,GnedenkoKolmogorov} for thorough introductions to the subject and more references.

Polynomials evaluated in random variables are central in several branches of probability theory: they allow to model complex stochastic behaviour, and decomposing a random field on a polynomial basis is a common strategy to study a probabilistic model.
Such decomposition is known under various names: Walsh decomposition for boolean functions \cite{Walsh}, Wiener's \emph{polynomial chaos} \cite{WienerChaos}, and its discrete counterpart \cite{WienerDiscreteChaos}, Itô's \emph{multiple integrals decomposition} for Gaussian fields \cite{ItoWienerIntegrals} and for Poisson fields \cite{ItoPoissonIntegrals}, or yet Hoeffding's \emph{ANOVA decomposition} \cite{Hoeffding}.
To illustrate the importance of this type of decomposition, here is a short non-exhaustive compilation of works, pertaining to different areas of mathematics, fruitfully exploiting polynomials with random entries.
For conciseness, we restrict to papers from less than ten years ago (that is after 2015): \cite{MarinucciPeccatiRossiWigman,BhattacharyaDiaconisMukerjee,DeServedio,KindlerKirshnerODonnell,CaravennaSunZygourasKPZ,EskenazisIvanisvili,AdamczakLatalaMeller,ArmentanoAzaisDalmaoLeon,JaramilloNourdinPeccati,CaravennaSunZygourasHeatFlow,AdamczakPivovarovSimanjuntak,HairerRenormalisation,GloriaQi}.
We also mention that polynomial decomposition plays an important roles in computational mathematics, see for instance the influential papers \cite{XiuKarniadakis,BlatmanSudret}.

In view of the importance of polynomials with random inputs, following the resolution of the linear case, subsequent works have intensively studied probabilistic properties of such polynomials.
We refer to the two excellent surveys by \citeauthor{BogachevSurvey}, and the references therein, for more details on polynomials evaluated in generic random variables \cite{BogachevPolynomials}, and in independent Gaussian variables \cite{BogachevSurvey}.

The primary motivation of this work is to address the long-standing problem of characterizing the limiting distributions of polynomials, at least when the inputs admit finite moments.
As with any characterization result, this advances the field not only by providing precise insights on the behavior of random polynomials, but also by developing original tools to tame their complexity.
In particular, our novel analysis based on the directional influences allows us to break down a polynomials in pieces of lower degree.
Each of these smaller pieces can be further dissected until we obtain a decomposition for which we have an explicit control on each of the terms.
We believe that such a procedure could turn useful in other problems related to polynomials in random variables, as it enables an efficient dimension reduction.

\subsection{Related works}

\subsubsection{Characterization of the limits}

The problem we solve in this paper is explicitly formulated by \Citeauthor{KolmogorovQuestion}~\cite{KolmogorovQuestion} during a seminar, at least in the case of Gaussian inputs and degree $2$, but could have appeared earlier.
Quickly thereafter \cite{Sevastyanov} presents a solution for degree $2$ Gaussian polynomials, that is the case $d=2$ of our \cref{th:stabilite-wiener}.
This solution later reappears in \cite{Arcones}.
Despite being mentioned in different sources \cite[p.~85]{Janson}, \cite[Problem~4, p.~740]{BogachevPolynomials}, \cite[Question~3, p.~562]{BogachevSurvey}, \cite[Conclusion]{BDM}, before the present work, this question has only received very partial answers.
\begin{itemize}[wide]
  \item \cite[Cor.~2]{AgafontsevBogachev} provides a form of almost sure stability for Gaussian polynomials of a certain type.
  \item Relying on a diagonalisation argument, \cite[Thm.~1.2]{BogachevKosovNourdinPoly} proves a version of \cref{th:stabilite-wiener} for $d =2$ and for multivariate vectors.
  \item \cite{BMMBernoulli,BDM} gives a characterisation of the limits of generic degree $2$ polynomials under the additional constraints that the coefficients of the polynomials are in $\{0,1\}$, allowing for graph-theoretic argument.
\end{itemize}
Apart from these works, we are not aware of any prior conclusive results addressing the stability of the laws of polynomials with random inputs.

Aside from the aforementioned characterisation of the limits, polynomials in random variables have been intensively studied in many directions \cite{BogachevSurvey,BogachevPolynomials}.
In this section, we highlight two particular directions of research that are indirectly connected to the question of the stability of distributions of polynomials.

\subsubsection{Central limit theorems}
An important line of research focuses on generalizing the central limit theorem to non-linear functionals, particularly polynomial ones.
In this context, the generalization typically involves identifying sufficient — and sometimes necessary — conditions to guarantee asymptotic normality, along with quantifying the rate of convergence in appropriate probabilistic distances.
Criteria ensuring central convergence serve as the backbone of asymptotic results for polynomials in independent and identically distributed random variables and play a pivotal role in our approach.
A crucial ingredient of our proof is the recursive decomposition of a polynomial into lower-degree ones that exhibit asymptotic Gaussian behavior.

Let us quote, in a very non-exhaustive way, some seminal contributions regarding non-linear central limit theorems.
The interested reader can also consult the references therein.
\begin{itemize}[wide]
  \item In his seminal works, \Citeauthor{deJongQuadratic} gives sufficient conditions for central limit theorems for quadratic forms \cite{deJongQuadratic} and for multilinear polynomials \cite{deJongPolylinear}.
  \item \Citeauthor{NualartPecattiCLT}~\cite{NualartPecattiCLT}, and \Citeauthor{PeccatiTudor}~\cite{PeccatiTudor}, characterize central limit theorems for homogeneous Gaussian polynomials, also known as \emph{Wiener chaoses}, in term of the convergence of the fourth moment.
  Still, in the Gaussian setting, \Citeauthor{NualartOrtizLatorre}~\cite{NualartOrtizLatorre} express condition for asymptotic normality in terms of operators from Malliavin calculus.
 Later on, \Citeauthor{NourdinPeccatiStein}~\cite{NourdinPeccatiStein} use similar ideas from Malliavin calculus, and combine them with Stein's method, to quantity normal convergence in Kolmogorov distance and total variation distance.
 See \cite{NourdinPeccatiBlueBook,AzmoodehPeccatiYang} for further developments in this line of research.
  \item \Citeauthor{ChatterjeeNewMethod}~\cite{ChatterjeeNewMethod} builds upon the celebrated Stein's method to obtain a new criterion for asymptotic normality in terms of a variance bound.
  In the same spirit, \Citeauthor{ChatterjeeSecondOrder}~\cite{ChatterjeeSecondOrder}, and \Citeauthor{NPRPoincare}~\cite{NPRPoincare} provide criteria for asymptotic convergence of non-linear functionals of a Gaussian fields, that can actually be used beyond the polynomial setting.
\item \Citeauthor{DoeblerPeccatiPoisson}~\cite{DoeblerPeccatiPoisson} extends the fourth-moment theorem to case of polynomials with Poisson entries, also known as the Poisson--Wiener chaoses.
  We stress that our \cref{th:stabilite-chaos} does not apply in the full generality of Poisson chaoses.
  Contrarily to the Gaussian case, a Poisson distribution of mean $\lambda$ cannot be expressed as a polynomial transform of Poisson distribution with mean $1$.
  Thus, even the Poisson chaos of degree one, cannot realised as linear form in a independent and identically distributed sequence.
  It is a question of high interest to determine the closure in law of Poisson chaoses.
\end{itemize}

\subsubsection{Invariance principles}
A second popular line of research concerns the so called \emph{invariance principles}.
Informally speaking, invariance principles compare the law of a polynomial $P(\vec{X})$, where $\vec{X} = (X_{i})$ is a sequence of independent random variables with a common law that is rather generic, to that of $P(\vec{G})$ where $\vec{G}$ is a sequence of independent standard Gaussian.
Such invariance principles allow to generalize results and tools from the Gaussian framework to a more singular one.
In particular, they play a prominent role in our proof of the stability theorem for polynomials with generic inputs.

As above, we present a short and non-exhaustive selection of emblematic results in the field.
\begin{itemize}[wide]
  \item 
\Citeauthor{RotarQuadratic} gives sufficient conditions for invariance principles for quadratic forms in \cite{RotarQuadratic}, and for multilinear polynomials in \cite{RotarPolylinear}.
  Later, \Citeauthor{GotzeTikhomirov}~\cite{GotzeTikhomirov} quantify Rotar's invariance principle.%
\item \Citeauthor{ChatterjeeLindeberg}~\cite{ChatterjeeSimpleInvariance,ChatterjeeLindeberg} establishes a new invariance principle beyond the polynomial setting, encompassing smooth functionals.
\item \Citeauthor{MOO}~\cite{MOO} proves an invariance principle for polynomials, with an explicit quantification in the degree and the so-called \emph{influence}.
  This principle allows for the resolution of two important conjectures related to boolean functions theory.
\Citeauthor{NPR}~\cite{NPR} builds upon the aforementioned invariance principle, and derive a central limit theorems for a polynomials with general random independent entries from the particular case of Gaussian entries.
\end{itemize}

\subsection{Outline of the proof in the Gaussian case}

\subsubsection{A new criterion for asymptotic normality}
As previously mentioned, our proof of \cref{th:stabilite-wiener} relies on the following principle, which characterizes the asymptotic normality of Gaussian polynomials.
See \cref{th:normal-convergence} and \cref{th:wiener:normal-convergence} for more precise statements.

\begin{principle}\label{principle}
  Let $\vec{G}$ be an infinite vector of independent standard Gaussian variables, and let $(P_n)$ be a sequence of multilinear polynomials of degree $d \geq 2$ without diagonal terms (that is, with no repeated indices).
  Define \( F_n \coloneq P_n(\vec{G}) \), which is assumed to be centered with unit variance.
  Then, the following statements are \emph{heuristically} equivalent:
  \begin{enumerate}[(i)]
    \item $(F_n)_{n \geq 1}$ is asymptotically Gaussian.
    \item $(F_n)_{n \geq 1}$ is asymptotically independent of all polynomials in $\vec{G}$ of degree at most $d-1$.
    \item $(F_n)_{n \geq 1}$ is asymptotically independent of all polynomials in $\vec{G}$ of degree at most $\floor*{\frac{d}{2}}$.
  \end{enumerate}
\end{principle}

We quantify the asymptotic independence of $(F_{n})$ with a sequence of polynomials $(X_{n})$ in terms of
\begin{equation*}
  \Gamma(F_{n}, X_{n}) \coloneq \sum_{i \in \mathbb{N}} \partial_{G_{i}} F_{n} \partial_{G_{i}} X_{n}.
\end{equation*}
The seminorm $\tilde{\rho}_{k}$ rewrites in terms of $\Gamma$, and in \cref{th:wiener:asymptotic-independence}, we explicitly connect the vanishing of the seminorm with some asymptotic independence property.

For convenience, in the sequel, we present a slightly modified version of this semi-norm, \( \rho_k(F) \), where the supremum is taken over the unit ball of \( \mathcal{W}_d \), the Wiener chaos of order \( d \), that is a distinguished subset of degree $d$ polynomials.
Since \( \tilde{\rho}_k \) and \( \rho_k \) are bi-comparable, we do not delve into these refinements, which require defining Wiener chaoses.

For \( d \in \{2,3\} \), \( \lfloor d/2 \rfloor = 1 \), and the criterion for asymptotic normality takes a simpler form:
\begin{equation*}
F_n \to \mathcal{N}(0,1) \,\Leftrightarrow\, \tilde{\rho}_1(F_n) \to 0 \,\Leftrightarrow\, \sup_{\vec{a} \in l^2(\mathbb{N}), \|\vec{a}\|_2 = 1} \left\| \sum_i a_i \partial_{G_i} F_n \right\|_2 \to 0.
\end{equation*}

To illustrate our proof, we restrict ourselves to the cases \( d \in \{1,2,3\} \), which are sufficiently rich to highlight the mechanisms of the proof and are of independent interest.

\subsubsection{Schematic proof: factoring out influential directions}
Our approach proceeds by induction and relies on \cref{principle} in its contrapositive form. Assume that \( d \in \{1,2,3\} \) and that \( F_n \) is not asymptotically Gaussian. Then, there exists \( \vec{a}_n \in l^2(\mathbb{N}) \) and \( \delta > 0 \) such that 
\[
  \norm*{\sum_{i} a_{n,i}\partial_{G_{i}} F_{n}}_{L^{2}(\prob)}  \geq \delta,
\]
for sufficiently large \( n \). Setting \( H_n \coloneq \sum_{i \geq 1} a_n(i) G_i \sim \mathcal{N}(0,1) \), this implies that \( F_n \) depends macroscopically on \( H_n \). 

By performing an orthonormal change of basis such that \( G_1 \to H_n \), which preserves the distribution of $\vec{G}$, one can write
\[
F_n = \sum_{k=1}^d A_{k,n}(G_2, \ldots) H_k(G_1) + A_{0,n},
\]
where the coefficients \( (A_{k,n})_{0 \leq k \leq d} \) are polynomials in \( (G_i)_{i \geq 2} \), hence independent of \( G_1 \). The coefficients \( (A_{k,n})_{n \geq 1} \) have degree strictly less than \( d \) and can be handled inductively, while \( A_{0,n} \) remains of degree \( d \).

Repeating this process for \( A_{0,n} \), and after a sufficient number of iterations, the remainders become asymptotically Gaussian, as per the aforementioned principle. Specifically, for some appropriately chosen \( r_n \to \infty \), we write:
\[
F_n = \underbrace{\left(F_{n,1} + \cdots + F_{n,r_n}\right)}_{\text{Induction Step}} + \underbrace{R_{n,r_n},}_{\text{Gaussian Remainder}}
\]
where the following points are noteworthy:
\begin{itemize}
    \item[(i)] The term \( F_{n,1} + \cdots + F_{n,r_n} \) involves countably many polynomials of degree strictly less than \( d \), requiring the induction to be carried out in an infinite-dimensional setting.
    \item[(ii)] It is necessary to exchange the limits in \( r \) and \( n \) for the series \( F_{n,1} + \cdots + F_{n,r} \). By construction, \( \mathbb{E}(F_{n,i} F_{n,j}) = 0 \) for \( i \neq j \), allowing for \( L^2 \)-control of the series.
    \item[(iii)] For \( d \geq 4 \), the criterion involving \( \tilde{\rho}_1 \) is insufficient. However, the same strategy applies, with decompositions of \( (F_n)_{n \geq 1} \) taking the form 
    \[
    F_n = \sum_{k=0}^{\lfloor d/q \rfloor} A_{k,n} H_k(X_n),
    \]
    where \( (X_n) \) is a suitable polynomial of degree \( q < d \) that is asymptotically normal, and \( (A_{k,n})_{k \geq 0} \) are polynomials asymptotically independent of \( (X_n)_{n \geq 1} \). Justifying such decompositions and asymptotic independence is non-trivial; see \cref{s:proof-wiener} for details.
\end{itemize}
\subsubsection{The statement for low degree polynomials}
We give a \emph{ad hoc} definition of Wiener chaoses.
We give more remainders on Wiener chaoses in \cref{s:wiener:reminder}.
We fix $\vec{G} = (G_{i} : i \in \mathbb{N})$ a sequence of independent standard Gaussian variables, and we write $H_{k}$ for the Hermite polynomial of degree $k$.
Here by $\operatorname{vect} A$, we mean the closure in $L^{2}(\prob)$ of the linear space generated by $A$ and we set
\begin{equation}\label{eq:def-wiener-chaos}
  \mathcal{W}_{p} \coloneq \operatorname{vect} \set*{ \prod_{i \in \mathbb{N}} H_{k_{i}}(G_{i}) : \sum_{i} k_{i} = p }.
\end{equation}
In particular, $\mathcal{W}_{0} = \mathbb{R}$, and $\mathcal{W}_{1}$ contains only Gaussian variables, and each $\mathcal{W}_{p}$ in closed in $L^{2}(\prob)$.
We also recall that
\begin{equation}\label{eq:wiener-decomposition}
  \mathcal{W}_{\leq p} \coloneq \bigoplus_{p=0}^{d} \mathcal{W}_{p} = \set*{ P(\vec{G}) : \operatorname{deg} P \leq d }.
\end{equation}
Our induction is formulated as follows.
\begin{definition}
  For $d \in \mathbb{N}^{*}$, we say that a sequence $(\vec{F}_{n})$ of random infinite vectors is $d$-admissible provided that:
  \begin{align}
    & \exists K \in \mathbb{N}^{*},\, \forall i \in \mathbb{N},\, \exists p_{i} \in \{0,\dots, K\},\, \forall n \in \mathbb{N},\, \bracket*{ F_{n,i} \in \mathcal{W}_{p_{i}},\ \text{and}\ \Esp*{F_{n,i}^{2}} = 1 };
  \\& \forall i \in \mathbb{N},\, \bracket*{ p_{i} \leq d,\ \text{or}\ F_{n,i} \xrightarrow[n \to \infty]{\law} \mathcal{N}(0,1) }.
  \end{align}
\end{definition}
Let us establish the following result, which in view of \cref{eq:wiener-decomposition} implies \cref{th:stabilite-wiener} for $d \in \{1,2,3\}$.
\begin{theorem}\label{th:wiener:stabilite:outline}
  Let $(\vec{F}_{n})$ be a $d$-admissible vector with $d \in \{1,2,3\}$, that converges in law to some $\vec{F}_{\infty}$.
  Then, there exists a sequence $(Y_{i}) \in \mathcal{W}_{\leq d}^{\mathbb{N}}$ such that
  \begin{equation*}
    \law{\vec{F}_{\infty}} = \law{\vec{Y}}.
  \end{equation*}
\end{theorem}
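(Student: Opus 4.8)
The plan is to argue by induction on $d$, proving a componentwise refinement of the statement: for a $d$-admissible $(\vec{F}_n)$ converging in law to $\vec{F}_\infty$, one can choose $Y_i \in \mathcal{W}_{\leq 1}$ whenever $F_{n,i} \xrightarrow{\law} \mathcal{N}(0,1)$, and $Y_i \in \mathcal{W}_{\leq p_i}$ otherwise, $p_i \leq d$ being the order of the chaos containing $F_{n,i}$. Only this refined version survives the induction, because the degree bookkeeping below is tight. For $d = 1$ the argument is short: after passing to a subsequence the $\mathcal{W}_0$-components (which equal $\pm 1$) stabilise, and every other component either lies in $\mathcal{W}_1$, hence is exactly $\mathcal{N}(0,1)$, or converges to $\mathcal{N}(0,1)$ inside a fixed chaos; passing to a further subsequence so that all pairwise covariances converge, the multivariate fourth-moment theorem of \citeauthor{PeccatiTudor}~\cite{PeccatiTudor} --- applied to finitely many coordinates at a time, which suffices for convergence in the product topology --- shows that these components converge \emph{jointly} to a centred Gaussian vector, which is realised in $\mathcal{W}_{\leq 1}$.

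For the inductive step we fix $d \in \{2,3\}$ and assume the refinement at level $d-1$. Call a component \emph{critical} if $F_{n,i}$ lies in $\mathcal{W}_d$ for all $n$ and $(F_{n,i})_n$ is not asymptotically Gaussian; every non-critical component either has chaos order $\leq d-1$ or converges to $\mathcal{N}(0,1)$. To each critical component we apply a \emph{dimension-reduction procedure}, for $n$ fixed: with $X \coloneq F_{n,i}$, let $M_X$ be the positive trace-class operator on $\ell^2(\mathbb{N})$ with entries $\Esp{\partial_{G_\ell} X\, \partial_{G_m} X}$, so that $\tr M_X = d\,\Esp{X^2}$ by the classical chaos identity and $\lambda_{\max}(M_X) = \tilde{\rho}_1(X)^2$; let $a$ be a unit top eigenvector, set $H \coloneq \sum_\ell a(\ell) G_\ell \sim \mathcal{N}(0,1)$, complete $a$ to an orthonormal basis of $\ell^2(\mathbb{N})$, and expand $X = \sum_{k=0}^{d} A_k\, H_k(H)$ with $A_k \in \mathcal{W}_{d-k}$ independent of $H$. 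Since $\tilde{\rho}_1(X)^2 = \norm*{\partial_H X}_{L^2}^2 = \sum_{k \geq 1} k\, k!\, \Esp{A_k^2}$, one gets $\Esp{A_0^2} = \Esp{X^2} - \sum_{k \geq 1} k!\, \Esp{A_k^2} \leq \Esp{X^2} - \tilde{\rho}_1(X)^2/d$. We recurse on $A_0 \in \mathcal{W}_d$ and stop the first time $\tilde{\rho}_1$ of the current remainder drops below a threshold $\epsilon(n) > 0$; since each step lowers the second moment by at least $\epsilon(n)/d$, the recursion terminates within $d/\epsilon(n)$ steps and yields
\begin{equation*}
  F_{n,i} \;=\; \sum_{j \geq 1}\, \sum_{k=1}^{d} A_{k,n}^{(j)}\, H_k\!\bigl(H_n^{(j)}\bigr) \;+\; \tilde{R}_{n,i},
\end{equation*}
a finite sum, with $H_n^{(j)} \in \mathcal{W}_1$, $A_{k,n}^{(j)} \in \mathcal{W}_{d-k}$, and $\tilde{\rho}_1(\tilde{R}_{n,i})^2 < \epsilon(n)$. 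Taking $\epsilon(n) \to 0$ forces $\tilde{\rho}_1(\tilde{R}_{n,i}) \to 0$, so $\tilde{R}_{n,i}$ is asymptotically Gaussian by \cref{th:normal-convergence}, the index $k = 1$ being admissible there since $1 \in \set*{\floor*{\frac{d}{2}}, \dots, d-1}$ for $d \in \{2,3\}$. A Gram--Schmidt computation shows that the blocks $\sum_k A_{k,n}^{(j)} H_k(H_n^{(j)})$ are pairwise orthogonal and orthogonal to $\tilde{R}_{n,i}$, whence $\sum_j \norm*{\sum_k A_{k,n}^{(j)} H_k(H_n^{(j)})}_{L^2}^2 \leq 1$ uniformly in $n$.

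Normalising $A_{k,n}^{(j)}$ to a unit vector $\widehat{A}_{k,n}^{(j)} \in \mathcal{W}_{d-k}$ with scalar $\lambda_{k,n}^{(j)} \in [-1,1]$, we assemble into a single countable vector $\vec{V}_n$ all the $\widehat{A}_{k,n}^{(j)}$, all the $H_n^{(j)}$, all the remainders $\tilde{R}_{n,i}$ over the critical indices, and all non-critical components of $\vec{F}_n$; then $\vec{V}_n$ is $(d-1)$-admissible. Along a subsequence $\vec{V}_n$ converges in law and $\lambda_{k,n}^{(j)} \to \lambda_k^{(j)}$, so the inductive hypothesis provides a vector $\vec{Z}$ of the same law, whose entries $\widehat{B}_k^{(j)}$, $L^{(j)}$, $S_i$ --- corresponding to $\widehat{A}_{k,n}^{(j)}$, $H_n^{(j)}$, $\tilde{R}_{n,i}$ --- satisfy $\widehat{B}_k^{(j)} \in \mathcal{W}_{\leq d-k}$, $L^{(j)} \in \mathcal{W}_{\leq 1}$, and $S_i$ Gaussian. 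We then exchange the two limits: since everything lives in chaoses of order at most $d$, hypercontractivity promotes convergence in law of $\vec{V}_n$ to convergence of all mixed moments; truncating the series at level $J$ gives a fixed polynomial in $\vec{V}_n$ with converging scalar coefficients, which therefore converges in law to the same polynomial in $\vec{Z}$, while the uniform square-summability of the block norms disposes of the tail $J \to \infty$. Hence $F_{n,i} \xrightarrow{\law} Q_i(\vec{Z})$, where $Q_i(\vec{Z}) \coloneq \sum_{j \geq 1} \sum_{k=1}^{d} \lambda_k^{(j)}\, \widehat{B}_k^{(j)}\, H_k(L^{(j)}) + S_i$. Each summand has degree at most $(d-k) + k = d$ because $\deg L^{(j)} \leq 1$, the $L^2$-convergent series of such summands lies in the closed subspace $\mathcal{W}_{\leq d}$, and $S_i$ is Gaussian; thus $Q_i(\vec{Z}) \in \mathcal{W}_{\leq d}$. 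Setting $Y_i \coloneq Q_i(\vec{Z})$ for critical $i$ and letting $Y_i$ be the entry of $\vec{Z}$ coming from $F_{n,i}$ otherwise, and verifying that these componentwise convergences assemble into joint convergence in law (again through finite-dimensional marginals), we obtain $\law{\vec{F}_\infty} = \law{\vec{Y}}$ with $\vec{Y}$ of the refined form, in particular $\vec{Y} \in \mathcal{W}_{\leq d}^{\mathbb{N}}$.

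The principal obstacle is to carry out the infinite-dimensional induction and the limit exchange jointly. One must set up the reduction so that the auxiliary family is \emph{exactly} $(d-1)$-admissible --- and this is precisely where \cref{th:normal-convergence} is indispensable, certifying that the residual of the reduction is asymptotically Gaussian rather than merely of lower influence --- and then control, uniformly in $n$, the tail of an $n$-dependent series whose number of terms grows with $n$; that uniform control hinges on the Gram--Schmidt orthogonality of the extracted blocks together with hypercontractivity of fixed Wiener chaoses. The tightness of the degree accounting, finally, is what forces one to strengthen the statement to its componentwise form before starting the induction.
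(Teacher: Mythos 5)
Your construction is, in substance, the paper's: for $d\in\{2,3\}$ you greedily factor out the direction of maximal degree-one influence, obtain blocks $\sum_k A_k H_k(H)$ with coefficients independent of the extracted Gaussian direction, certify the residual asymptotically Gaussian via the $\tilde{\rho}_1$ criterion, assemble a lower-degree admissible vector, and induct. Two of your ingredients are genuinely nicer variants: the exact spectral maximizer of the trace-class operator $M_X$ (instead of an almost-maximizer), and the quantitative decrement $\Esp*{A_0^2}\le\Esp*{X^2}-\tilde{\rho}_1(X)^2/d$, which terminates the extraction in at most $d/\epsilon(n)$ steps without any diagonal argument at that stage. Those computations are correct.

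The gap is in the final limit exchange. You write $F_{n,i}=\sum_{j\le J(n,i)}B_n^{(j)}+\tilde R_{n,i}$ with $J(n,i)\le d/\epsilon(n)$ growing in $n$, and you claim that ``the uniform square-summability of the block norms disposes of the tail $J\to\infty$''. The bound $\sup_n\sum_j\norm{B_n^{(j)}}_{L^2}^2\le 1$ does \emph{not} give what you need, namely $\limsup_n\sum_{j>J}\norm{B_n^{(j)}}_{L^2}^2\to 0$ as $J\to\infty$: your own construction only forces $\norm{B_n^{(j)}}_{L^2}^2\ge\epsilon(n)/d$ per block, so the mass can be spread over $O(1/\epsilon(n))$ blocks each of size $o(1)$ (one checks $\norm{B_n^{(j)}}_{L^2}^2\le d/j$, which is not summable), in which case every fixed-$j$ limit block vanishes while an order-one amount of variance escapes past every fixed truncation level $J$; the law of $F_{n,i}$ would then not be captured by $Q_i(\vec Z)$. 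This is exactly the point the paper's proof is organized around: one first extracts a subsequence along which the block variances converge to a summable sequence, then invokes \cref{th:analytical-lemma} to choose a \emph{slowly} growing number $l_n\to\infty$ of retained blocks so that \cref{eq:intro:convergence-max} holds, and the residual after $l_n$ blocks (which absorbs all later blocks) is shown asymptotically Gaussian because $\rho_1$ of that residual is dominated by the $L^2$-norm of the $(l_n+1)$-st block, which tends to $0$; only then does the Wasserstein truncation argument close. Your scheme can be repaired the same way --- re-absorb all blocks of index $>J_n$ into the remainder for a suitably slow $J_n$, and show $\tilde{\rho}_1$ of that enlarged remainder vanishes using \cref{th:wiener:normal-convergence} and the monotonicity $\tilde{\rho}_1(X^{(j)})\le\tilde{\rho}_1(X^{(j-1)})$ you implicitly have --- but as written the tail step is asserted, not proved, and the assertion it rests on is false in general.
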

\subsubsection{Initialisation: proof of the case \texorpdfstring{$d=1$}{d=1}}
Let us consider a $1$-admissible sequence $(\vec{F}_{n})$ converging in law to some $\vec{F}_{\infty}$.
We can assume that $\vec{F}_{n}$ has no deterministic component, that is $p_{i} > 0$ for all $i \in \mathbb{N}$.
Since we work with Gaussian polynomials, convergence in law implies convergence of moments, see \cref{th:polynomials:convergence-moment}, we have that
\begin{equation*}
  \Esp*{ F_{n,i} F_{n,j}} \xrightarrow[n \to \infty]{} c_{ij}.
\end{equation*}
The first step consists of exhibiting $\vec{N}=(N_1,N_2,\cdots)$ a sequence in $\mathcal{W}_1$ such that $\E(N_i N_j)=c_{i,j}$. Such a sequence can be constructed inductively. Assume that we have built $(N_1,\cdots,N_p)$ whose covariance matrix is $C_p\coloneq(c_{i,j})_{1\le i,j \le p}$. We seek for $N_{p+1}=\sum_{i=1}^p \alpha_i N_i +\alpha_{p+1} G$ where $G$ is independent of $(N_1,\cdots,N_p)$, whenever $C_p$ is invertible. If $C_p$ is not invertible, we can build $N_{p+1}$ as a linear combination of $G$ and $(N_{i_1},\cdots,N_{i_q})$ which is of full rank among $(N_1,\cdots,N_p)$. We deal only with case $\det(C_p)\neq 0$ below, for simplicity. The coefficients $(\alpha_1,\cdots,\alpha_{p+1})$ must fulfil
\begin{enumerate}
\item $\E\left(N_{p+1} N_i\right)=c_{i,p+1}, i\in \llbracket 1,p\rrbracket$ which gives 
$
\begin{pmatrix}
c_{1,1} & \cdots & c_{1,p}\\
\vdots & \vdots & \vdots \\
c_{p,1}& \cdots & c_{p,p}\\
\end{pmatrix}
\begin{pmatrix}
\alpha_1\\
\vdots\\
\alpha_p
\end{pmatrix}
=
\begin{pmatrix}
c_{1,p+1}\\
\vdots\\
c_{p,p+1}\\
\end{pmatrix}
.
$
This gives $(\alpha_1,\cdots,\alpha_p)$ by inverting the system. 
\item Besides $\E\left(N_{p+1}^2\right)=c_{p+1,p+1}=\alpha_{p+1}^2+\sum_{1\le i,j\le p} \alpha_i \alpha_j c_{i,j}$ which provides $\alpha_{p+1}$ up to the sign, which does not matter. The resulting sequence $(N_i)_{i\ge 1}$ admits $C=(c_{i,j})_{1\le i,j \le p}$ as covariance matrix.
\end{enumerate}

A seminal result of \citeauthor{PeccatiTudor}~\cite{PeccatiTudor} asserts that convergence in distribution of vectors with Wiener chaotic entries to Gaussian vectors is equivalent to convergence of covariances matrices and component-wise convergence. Hence, we have that
\begin{equation*}
  \vec{F}_{n} \xrightarrow[n \to \infty]{\law} \vec{N},
\end{equation*}
which concludes the proof for $1$-admissible sequences.
\hfill\qedsymbol

\subsubsection{First induction step: from linear to quadratic}
We now show how to deduce the claim for $2$-admissible sequences.
Thus, take $(\vec{F}_{n})$ a $2$-admissible sequence.
As highlighted above, our pivotal idea is that we can factor out Gaussian directions whenever a coordinate fails to be asymptotically Gaussian.
To make this intuition precise, we use the formalism of \emph{Malliavin derivative}, also known as \emph{carré du champ}.
For a precise definition, see \cref{s:wiener:reminder}, here we simply recall that Gaussian polynomials $F = P(\vec{G})$ and $\tilde{F} = \tilde{P}(\vec{G})$, we have
\begin{equation*}
  \Gamma(F, \tilde{F}) \coloneq \sum_{i \in \mathbb{N}} \partial_{G_{i}} F \partial_{G_{i}} \tilde{F}.
\end{equation*}
Let us define \emph{directional influence of degree $1$}
\begin{equation*}
  \rho_{1}(F) \coloneq \Sup*{ \norm{ \Gamma(F, X) }_{L^{2}(\prob)} : X \in \mathcal{W}_{1},\, \norm{X} \leq 1 }.
\end{equation*}
Since every $X \in \mathcal{W}$ is of the form $X = \sum a_{i} G_{i}$ for some $a \in \ell^{2}(\mathbb{N})$, we have the simpler formula
\begin{equation*}
  \Gamma(F, X) = \sum_{i} a_{i} \partial_{i} P(\vec{G}).
\end{equation*}
See \cref{s:influences} for a more detailed discussion on $\rho_{1}$ and related objects.
Write $I$ for the set of indices $i$ such that $F_{n,i}$ is not asymptotically Gaussian.
Then we consider the following iterative construction.
Set $R_{n,i,0} \coloneq F_{n,i}$.
Then assuming that we have constructed $R_{n,i,k}$, define $R_{n,i,k+1}$ in the following way.
Consider $(X_{n,i,k+1})_{n} \in \mathcal{W}_{1}^{\mathbb{N}}$ such that
\begin{equation*}
  \rho_{1}(R_{n,i,k}) \leq \norm{\Gamma(R_{n,i,k}, X_{n,i,k+1})} + \frac{1}{n}.
\end{equation*}
Then, completing $X_{n,i,k+1}$ in an orthonormal Gaussian basis of $\mathcal{W}_1$ and expressing $R_{n,i,k}$ on this basis allows to write:
\begin{equation*}
  R_{n,i,k} = A_{n,i,k+1,2} H_{2}(X_{n,i,k+1})  + A_{n,i,k+1,1} H_{1}(X_{n,i,k+1}) + R_{n,i,k+1},
\end{equation*}
where $A_{n,i,k+1,2} \in \mathcal{W}_{0}$ (it is a constant), $A_{n,i,k+1,1} \in \mathcal{W}_{1}$ and is independent of $X_{n,i,k+1}$, and $R_{n,i,k+1} \in \mathcal{W}_{2}$ is independent of $X_{n,i,k+1}$.
Necessarily, we also find that $X_{n,i,k+1}$ is independent of $X_{n,i,k}$.
Indeed, write
\begin{equation*}
  X_{n,i,k+1} = \sqrt{t} X_{n,i,k} + \sqrt{1-t} N,
\end{equation*}
for some $N \in \mathcal{W}_{1}$ independent of $X_{n,i,k}$, and we have
\begin{equation*}
  \Gamma(X_{n,i,k+1}, R_{n,i,k}) = \sqrt{1-t} \Gamma(N, R_{n,i,k}),
\end{equation*}
which contradicts the almost optimality of $X_{n,i,k+1}$ unless $t=0$.
Thus defining
\begin{equation*}
  F_{n,i,k} \coloneq A_{n,i,k,2} H_{2}(X_{n,i,k}) + A_{n,i,k,1} H_{1}(X_{n,i,k}),
\end{equation*}
we find that
\begin{equation}\label{eq:intro:decomposition-F}
  F_{n,i} = \sum_{k=1}^{l} F_{n,i,k} + R_{n,i,l}.
\end{equation}
Up to extracting a subsequence, we can assume that
\begin{equation*}
  \Esp*{ F_{n,i,k}^{2} } \xrightarrow[n \to \infty]{} v_{i,k},
\end{equation*}
for some non negative numbers $v_{i,k}$'s and any $i,k\ge 1$.
Based on elementary analytical considerations, see \cref{th:analytical-lemma}, one can find a sequence $l_{n} \to \infty$, such that
\begin{equation}\label{eq:intro:convergence-max}
  \sum_{i \leq l_{n}} \sum_{k \leq l_{n}} \abs*{ \Esp*{ F_{n,i,k}^{2}} - v_{i,k} } \xrightarrow[n \to \infty]{} 0.
\end{equation}

\begin{lemma}
  The vector
  \begin{equation*}
    \paren*{ A_{n,i,k,2}, A_{n,i,k,1}, R_{n,i,l_{n}}, X_{n,i,k} : i \in \mathbb{N}, k \in \mathbb{N}^{*} },
  \end{equation*}
  is $1$-admissible.
\end{lemma}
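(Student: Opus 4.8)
The plan is to verify the two clauses in the definition of $1$-admissibility for the vector $\paren*{A_{n,i,k,2}, A_{n,i,k,1}, R_{n,i,l_{n}}, X_{n,i,k} : i\in\mathbb N, k\in\mathbb N^{*}}$ (augmented, for those $i$ on which the iteration is not performed, by the unchanged coordinate $F_{n,i}$, which causes no trouble since there $F_{n,i}$ is already asymptotically Gaussian and lies in a fixed chaos). The only substantial point is the asymptotic normality of the remainders. For an index $i$ on which the iteration is carried out one has $p_{i}\leq 2$ (otherwise $2$-admissibility of $(\vec F_{n})$ would already force $F_{n,i}$ to be asymptotically Gaussian); by construction $A_{n,i,k,2}\in\mathcal W_{0}$, $A_{n,i,k,1}\in\mathcal W_{1}$, $X_{n,i,k}\in\mathcal W_{1}$ and $R_{n,i,l_{n}}\in\mathcal W_{p_{i}}\subseteq\mathcal W_{\leq 2}$, so every coordinate sits in a fixed Wiener chaos of bounded order. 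The unit-variance requirement is met by dividing each coordinate by its $L^{2}$-norm (coordinates that vanish identically — equivalently, whose limiting variance is $0$, which only happens in degenerate branches — are dropped, as they do not affect $\vec F_{\infty}$); I would treat this as routine bookkeeping. Since $A_{n,i,k,2}, A_{n,i,k,1}, X_{n,i,k}$ lie in $\mathcal W_{\leq 1}$, the second clause is automatic for them, and everything reduces to showing that, for each fixed $i$ with $p_{i}=2$, the normalized sequence $R_{n,i,l_{n}}$ converges in law to $\mathcal N(0,1)$.

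By the central-limit criterion of \cref{th:normal-convergence} (applied with $d=2$, so $\floor*{d/2}=1$; see the precise \cref{th:wiener:normal-convergence}) together with the bi-comparability of $\rho_{1}$ with $\tilde\rho_{1}$, it suffices to prove $\rho_{1}(R_{n,i,l_{n}})\to 0$ as $n\to\infty$ for each fixed $i$ (the normalization is harmless when the limiting variance is positive, and the null case was excluded above).

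To this end I would establish two facts about the iteration. Fix $i$ and $n$, write $R_{k}\coloneq R_{n,i,k}$ and $Y_{k+1}\coloneq X_{n,i,k+1}$, and compute the carré du champ in the orthonormal Gaussian basis obtained by sending $G_{1}$ to $Y_{k+1}$ (which preserves the law of $\vec G$). Using that $A_{n,i,k+1,2}$ is a constant, that $A_{n,i,k+1,1}\in\mathcal W_{1}$ is orthogonal to $Y_{k+1}$, and that $R_{k+1}$ is independent of $Y_{k+1}$, one obtains
\[
  \Gamma(R_{k},Y_{k+1})=2A_{n,i,k+1,2}\,H_{1}(Y_{k+1})+A_{n,i,k+1,1},\qquad \Gamma(R_{k},X)=\psh{A_{n,i,k+1,1}}{X}\,Y_{k+1}+\Gamma(R_{k+1},X)
\]
for every $X\in\mathcal W_{1}$ with $X\perp Y_{k+1}$, the summands being orthogonal in $L^{2}$. \emph{First}, the second identity (together with $\Gamma(R_{k+1},Y_{k+1})=0$, which holds because $R_{k+1}$ is independent of $Y_{k+1}$) gives $\rho_{1}(R_{k+1})\leq\rho_{1}(R_{k})$, after decomposing an arbitrary test direction into its component along $Y_{k+1}$ and its orthogonal part; thus $k\mapsto\rho_{1}(R_{k})$ is non-increasing. \emph{Second}, the first identity yields $\norm{\Gamma(R_{k},Y_{k+1})}^{2}$ as an explicit expression in $A_{n,i,k+1,2}$ and $A_{n,i,k+1,1}$, which — combined with the near-optimality $\rho_{1}(R_{k})\leq\norm{\Gamma(R_{k},Y_{k+1})}+\tfrac1n$, the bound $\rho_{1}\leq\sqrt2\,\norm{\cdot}$ on $\mathcal W_{2}$ (see \cref{s:influences}) together with $\norm{R_{k}}\leq\norm{R_{0}}=1$, and the orthogonal variance identity $\Esp{R_{k}^{2}}-\Esp{R_{k+1}^{2}}=2A_{n,i,k+1,2}^{2}+\Esp{A_{n,i,k+1,1}^{2}}$ — yields, with universal constants $c,C$,
\[
  \rho_{1}(R_{k})^{2}\ \leq\ c\bigl(\Esp{R_{k}^{2}}-\Esp{R_{k+1}^{2}}\bigr)+\frac{C}{n}.
\]
Summing over $k=0,\dots,l_{n}-1$, using $\Esp{R_{0}^{2}}=1$ and then the monotonicity just proved, gives
\[
  \rho_{1}(R_{n,i,l_{n}})^{2}\ \leq\ \rho_{1}(R_{l_{n}-1})^{2}\ \leq\ \frac1{l_{n}}\sum_{k=0}^{l_{n}-1}\rho_{1}(R_{k})^{2}\ \leq\ \frac{c}{l_{n}}+\frac{C}{n}\ \xrightarrow[n\to\infty]{}\ 0,
\]
since $l_{n}\to\infty$; only this property of $l_{n}$ is needed here, not the finer estimate \cref{eq:intro:convergence-max} under which it was chosen.

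The step I expect to be the main obstacle is the carré-du-champ computation after the change of basis $G_{1}\mapsto Y_{k+1}$, and in particular the simultaneous extraction from it of the monotonicity $\rho_{1}(R_{k+1})\leq\rho_{1}(R_{k})$ and of the energy estimate; of the two, the monotonicity is the less transparent. Everything else — the passage to a fixed chaos, the normalization, and the use of $l_{n}$ — is routine, provided one also notes that the degenerate branch $\rho_{1}(R_{k})=0$ forces $R_{k}=0$ in $\mathcal W_{2}$, so that the iteration stalls there without harm.
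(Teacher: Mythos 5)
Your proposal is correct and reduces the lemma to the same core claim as the paper --- namely that $\rho_{1}(R_{n,i,l_{n}})\to 0$, so that \cref{th:wiener:normal-convergence} yields asymptotic normality of the remainder --- but it proves that claim by a genuinely different mechanism. The paper bounds $\rho_{1}(R_{n,i,l_{n}})$ by $\norm{\Gamma(R_{n,i,l_{n}},X_{n,i,l_{n}+1})}+\tfrac1n$, identifies this (up to constants) with $\norm{F_{n,i,l_{n}+1}}_{L^{2}(\prob)}$, and then uses the subsequential limits $\Esp{F_{n,i,k}^{2}}\to v_{i,k}$, the summability $\sum_{k}v_{i,k}\le 1$ coming from orthogonality and the normalisation of $F_{n,i}$, and the specific choice of $l_{n}$ in \cref{eq:intro:convergence-max} to conclude $v_{i,l_{n}+1}\to 0$. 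You instead establish two structural facts about the iteration itself: the monotonicity $\rho_{1}(R_{n,i,k+1})\le\rho_{1}(R_{n,i,k})$, obtained from the orthogonal decomposition of $\Gamma(R_{n,i,k},X)$ along and across $X_{n,i,k+1}$ (this is correct: the cross term vanishes because $\Gamma(R_{n,i,k+1},X^{\perp})$ is independent of $X_{n,i,k+1}$, which is centered), and the telescoping energy estimate $\rho_{1}(R_{n,i,k})^{2}\le c\paren*{\Esp{R_{n,i,k}^{2}}-\Esp{R_{n,i,k+1}^{2}}}+C/n$ from near-optimality plus $\norm{\Gamma(R_{n,i,k},X_{n,i,k+1})}^{2}=4A_{n,i,k+1,2}^{2}+\norm{A_{n,i,k+1,1}}^{2}\le 2\,\Esp{F_{n,i,k+1}^{2}}$; a Cesàro average then gives the quantitative bound $\rho_{1}(R_{n,i,l_{n}})^{2}\le c/l_{n}+C/n$. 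What your route buys is that it needs only $l_{n}\to\infty$ (no extraction of the limits $v_{i,k}$ and no appeal to \cref{eq:intro:convergence-max}, which of course remains necessary later for the truncation argument) and it yields an explicit rate; what the paper's route buys is that it requires no new monotonicity lemma and recycles exactly the data already prepared for the subsequent convergence proof. One small caveat: your parenthetical equating ``vanishes identically'' with ``limiting variance $0$'' is not an equivalence --- a remainder with variance tending to $0$ but nonzero need not be asymptotically $\mathcal{N}(0,1)$ after renormalisation --- but since such a coordinate converges to $0$ in $L^{2}(\prob)$ it can simply be dropped from the admissible vector without affecting $\vec{F}_{\infty}$, which is the same level of bookkeeping the paper leaves implicit.
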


\begin{proof}
  By construction $A_{n,i,k,2} \in \mathcal{W}_{0}$, and $A_{n,i,k,1}, X_{n,i,k} \in \mathcal{W}_{1}$.
  Let us check that $R_{n,i,l_{n}}$ is asymptotically Gaussian.
  By definition
  \begin{equation*}
    \rho_{1}(R_{n,i,l_{n}}) \leq \norm{\Gamma(R_{n,i,l_{n}}, X_{n,i,l_{n}+1})} + \frac{1}{n}.
  \end{equation*}
  As explained previously, decomposing $R_{n,i,l_{n}} = F_{n,i,l_{n}+1} + R_{n,i,l_{n}+1}$, we find that $R_{n,i,l_{n}+1}$, $A_{n,i,l_{n}+1,2}$ and $A_{n,i,l_{n}+1,2}$ are independent of $X_{n,i,l_{n}+1}$ and that
  \begin{equation*}
    \begin{split}
      \Gamma(R_{n,i,l_{n}}, X_{n,i,l_{n}+1}) &= A_{n,i,l_{n}+1,2} \Gamma(H_{2}(X_{n,i,l_{n}+1}), X_{n,i,l_{n}+1}) + A_{n,i,l_{n}+1,1} \Gamma(X_{n,i,l_{n}+1}, X_{n,i,l_{n}+1}).
                                           \\&= 2A_{n,i,l_{n}+1,2} + A_{n,i,l_{n+1},1}.
    \end{split}
  \end{equation*}
  By independence and since $\Esp*{X H_{2}(X)} = 0$ for $X$ Gaussian, we have
  \begin{equation*}
    \norm{F_{n,i,l_{n}+1}}^{2} = \norm{A_{n,i,l_{n}+1,2} H_{2}(X_{n,i,l_{n}+1}) + A_{n,i,l_{n}+1} X_{n,i,l_{n}+1}}^{2} = 4 A_{n,i,l_{n}+1,2}^{2} + \norm{A_{n,i,l_{n}+1,1}^{2} }.
  \end{equation*}
  Combining with \cref{eq:intro:convergence-max}, we have shown that
  \begin{equation*}
    \rho_{1}(R_{n,i,l_{n}+1}) \leq 4 \norm{F_{n,i,l_{n}+1}} + \frac{1}{n} \leq c v^{1/2}_{i,l_{n}+1} + \frac{1}{n}+o(1)
  \end{equation*}
  Since all the terms in \cref{eq:intro:decomposition-F} are orthogonal, and $F_{n}$ is normalized, we also find that
  \begin{equation*}
    \sum_{k} v_{i,k} \leq 1.
  \end{equation*}
  Thus $v_{i,l_{n}+1} \to 0$.
  This shows that $\rho_{1}(R_{n,i,l_{n}+1}) \to 0$ as $n \to \infty$, and by \cref{th:wiener:normal-convergence} we conclude that it is asymptotically Gaussian.
\end{proof}

By the induction hypothesis, we can thus, up to extraction, find $A_{\infty,i,k,2} \in \mathcal{W}_{0}$, $A_{\infty,i,k,1} \in \mathcal{W}_{\leq 1}$, $R_{\infty,i} \in \mathcal{W}_{\leq 1}$, and $X_{\infty,i,k} \in \mathcal{W}_{\leq 1}$ such that
\begin{equation*}
\paren*{ A_{n,i,k,2}, A_{n,i,k,1}, R_{n,i,l_{n}}, X_{n,i,k} : i \in \mathbb{N}, k \in \mathbb{N}^{*} } \xrightarrow[n \to \infty]{\law} \paren*{ A_{\infty,i,k,2}, A_{\infty,i,k,1}, R_{\infty,i}, X_{\infty,i,k} : i \in \mathbb{N}, k \in \mathbb{N}^{*} }
\end{equation*}
By our inductive construction where $F_{n,i,k+1}$ is independent of $(X_{n,i,1}, \dots, X_{n,i,k})$ and since $F_{n,i}$ is normalised, we find that
\begin{equation*}
  1 \geq \norm*{ \sum_{k} F_{n,i,k} }^{2}_{L^{2}(\prob)} = \sum_{k} \Esp*{F_{n,i,k}^{2}}.
\end{equation*}
Since we work with Gaussian polynomials the orthogonality is preserved by the convergence in law, see \cref{th:polynomials:convergence-moment}, thus using that the norm is weakly lower semi-continuous, we get
\begin{equation*}
  1 \geq \norm*{ \sum_{k} F_{\infty,i,k} }^{2}_{L^{2}(\prob)} = \sum_{k} \Esp*{F_{\infty,i,k}^{2}}.
\end{equation*}
This shows that the series $\sum_{k} F_{\infty,i,k}$ is convergent in $L^{2}(\prob)$ and is an element of $\mathcal{W}_{\leq 2}$ since this space is closed in $L^{2}(\prob)$.
Thus let us define
\begin{equation*}
  F_{\infty,i} \coloneq \sum_{k} F_{\infty,i,k} + R_{\infty,i} \in \mathcal{W}_{\leq 2}.
\end{equation*}
We conclude the proof by showing the following convergence.
\begin{lemma}
  The sequence $(\vec{F}_{n})$ converges in law to $\vec{F}_{\infty} \coloneq (F_{\infty,i})$.
\end{lemma}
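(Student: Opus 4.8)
The plan is to establish the convergence one finite-dimensional marginal at a time. Since the finite-dimensional marginals determine the law on $\mathbb{R}^{\mathbb{N}}$ and $(\vec{F}_{n})$ is assumed to converge in law, it is enough to fix $m \in \mathbb{N}^{*}$ and prove that $(F_{n,i})_{i \leq m} \xrightarrow[n\to\infty]{\law} (F_{\infty,i})_{i \leq m}$. All limits below are taken along the subsequence on which the $1$-admissible vector $\paren*{ A_{n,i,k,2}, A_{n,i,k,1}, R_{n,i,l_{n}}, X_{n,i,k} : i, k }$ converges; since $(\vec{F}_{n})$ converges in law as a whole, establishing this subsequential convergence already identifies its limit law. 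Here and below $F_{\infty,i,k} \coloneq A_{\infty,i,k,2} H_{2}(X_{\infty,i,k}) + A_{\infty,i,k,1} H_{1}(X_{\infty,i,k})$, mirroring the finite-$n$ formula.

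The main device is an $L^{2}$-truncation in which the Gaussian remainder is \emph{kept inside} the approximant. For $r \in \mathbb{N}^{*}$ I would set
\begin{equation*}
  U_{n,i,r} \coloneq \sum_{k=1}^{r} F_{n,i,k} + R_{n,i,l_{n}}, \qquad U_{\infty,i,r} \coloneq \sum_{k=1}^{r} F_{\infty,i,k} + R_{\infty,i}.
\end{equation*}
Because $R_{n,i,l_{n}}$ is one of the coordinates of the convergent vector above and $F_{n,i,k} = A_{n,i,k,2} H_{2}(X_{n,i,k}) + A_{n,i,k,1} H_{1}(X_{n,i,k})$, for fixed $r$ the vector $(U_{n,i,r})_{i \leq m}$ is a fixed polynomial --- hence continuous --- function of finitely many of its coordinates; the continuous mapping theorem then gives $(U_{n,i,r})_{i \leq m} \xrightarrow[n \to \infty]{\law} (U_{\infty,i,r})_{i \leq m}$ for each $r$.

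Next I would bound the truncation error in $L^{2}$. By the orthogonality of the terms of \cref{eq:intro:decomposition-F}, for $n$ large enough that $l_{n} \geq r$,
\begin{equation*}
  \Esp*{ (F_{n,i} - U_{n,i,r})^{2} } = \sum_{k=r+1}^{l_{n}} \Esp*{ F_{n,i,k}^{2} } = \sum_{k=1}^{l_{n}} \Esp*{ F_{n,i,k}^{2} } - \sum_{k=1}^{r} \Esp*{ F_{n,i,k}^{2} } .
\end{equation*}
Since $\sum_{k \leq K} \Esp*{ F_{n,i,k}^{2} } \leq \Esp*{ F_{n,i}^{2} } = 1$ for every $K$, the $v_{i,k}$ are summable in $k$ with $\sum_{k} v_{i,k} \leq 1$; and by \cref{eq:intro:convergence-max} together with $l_{n} \to \infty$, the first sum above converges to $\sum_{k} v_{i,k}$ while the second, a finite sum, converges to $\sum_{k \leq r} v_{i,k}$, so the error tends to $\sum_{k > r} v_{i,k}$, which goes to $0$ as $r \to \infty$. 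Finally, as we deal with Gaussian polynomials of degree at most $2$, convergence in law entails convergence of second moments (\cref{th:polynomials:convergence-moment}); hence $v_{i,k} = \Esp*{ F_{\infty,i,k}^{2} }$, and by the same orthogonality on the limiting side $\Esp*{ (F_{\infty,i} - U_{\infty,i,r})^{2} } = \sum_{k > r} v_{i,k}$ as well.

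It remains to assemble these ingredients with the standard approximation argument for weak convergence: for a $1$-Lipschitz $f \colon \mathbb{R}^{m} \to [-1,1]$ and $\delta > 0$, Markov's inequality bounds $\abs{ \Esp*{ f((F_{n,i})_{i \leq m}) } - \Esp*{ f((U_{n,i,r})_{i \leq m}) } }$ by $\delta + 2 \delta^{-1} \sum_{i \leq m} \norm*{ F_{n,i} - U_{n,i,r} }_{L^{2}(\prob)}$, and likewise with $(U_{\infty,i,r})$ and $(F_{\infty,i})$; letting $n \to \infty$ at fixed $r$ (using the continuous mapping convergence above and $\norm*{ U_{\infty,i,r} - F_{\infty,i} }_{L^{2}(\prob)} \to 0$), then $r \to \infty$, then $\delta \to 0$, one obtains $(F_{n,i})_{i \leq m} \xrightarrow[n\to\infty]{\law} (F_{\infty,i})_{i \leq m}$, which proves the lemma. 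I expect the genuinely delicate point to be this exchange of the limits in $n$ and $r$: it works only because the remainder $R_{n,i,l_{n}}$ is transported as a coordinate of the convergent vector --- so in the limit it becomes $R_{\infty,i}$, instead of leaving the error variances bounded away from $0$ --- and because the carefully chosen $l_{n}$ supplied by \cref{eq:intro:convergence-max}, combined with the summability $\sum_{k} v_{i,k} \leq 1$, makes the tails $\sum_{k > r} \Esp*{ F_{n,i,k}^{2} }$ asymptotically negligible uniformly enough to allow the interchange.
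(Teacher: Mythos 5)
Your proof is correct and follows essentially the same route as the paper: truncate the series at a fixed level while keeping the remainder $R_{n,i,l_{n}}$ inside the approximant, control the tail in $L^{2}$ uniformly in $n$ via the orthogonality of the $F_{n,i,k}$ and \cref{eq:intro:convergence-max}, pass to the limit in $n$ for the finitely many coordinates of the convergent $1$-admissible vector, then let the truncation level grow. The only cosmetic difference is that you metrize the convergence with Lipschitz test functions and Markov's inequality where the paper uses the Wasserstein distance $\mathbf{W}_{2}$ from \cref{th:wasserstein-polynomial}.
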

\begin{proof}
Take a finite subset $J=\{j_1,\cdots,j_p\}\subset \mathbb{N}$. Then, since $l_n\to\infty$, there exists $N_0$ such that for any $n\ge N_0$ we have $J\subset \{0,\cdots,l_n\}$. In virtue of \cref{eq:intro:convergence-max} we have in particular
\[\sum_{i=1}^{l_n}\sum_{k=1}^{l_n} \left|\E\left[F_{n,i,k}^2\right]-v_{i,k}\right|\to 0.\]
Let $\epsilon>0$, one may find $K_\epsilon\ge 1$ and $N_1\ge N_0$ such that for any $n\ge N_1$ we have
\[
\sum_{l=1}^p\sum_{k={K_\epsilon+1}}^{l_n} \E\left[F_{n,j_l,k}^2\right]+\underbrace{\E\left[F_{\infty,j_l,k}^2\right]}_{=v_{j_l,k}}\le \epsilon.
\]
Thus, with $\mathbf{W}_{2}$ the $2$-Wasserstein defined in \cref{eq:def-wasserstein}, one recovers
\begin{eqnarray*}
&&\mathbf{W}_2\left(\left(F_{n,j}\right)_{j \in J}, \left(F_{\infty,j}\right)_{j \in J}\right) \\
&&\leq \epsilon + \mathbf{W}_2\left(\left(\sum_{k=1}^{K_\epsilon} F_{n,j_l,k} + R_{n,j_l,l_n}\right)_{j \in J}, \left(\sum_{k=1}^{K_\epsilon} F_{\infty,j_l,k} + R_{\infty,j_l}\right)_{j \in J}\right).
\end{eqnarray*}
Letting $n\to \infty$ and using induction hypothesis implies that \[\mathbf{W}_2\left(\left(\sum_{k=1}^{K_\epsilon} F_{n,j_l,k} + R_{n,j_l,l_n}\right)_{j \in J}, \left(\sum_{k=1}^{K_\epsilon} F_{\infty,j_l,k} + R_{\infty,j_l}\right)_{j \in J}\right)\to 0.\]
Letting $\epsilon\to 0$ completes the proof.
\end{proof}
\subsubsection{The quadratic case implies the cubic case}

The mechanism of proof is identical, we consider $\left(\vec{F_n}\right)$ an admissible sequence of order $3$.
As before we focus on the set $I$ of indices such that $(F_{n,i}); i\in I$ does not have a Gaussian limit. Then, we notice that, as in the case of degree $2$, for Wiener chaoses of degree $3$, the semi-norm $\rho_1$ controls the asymptotic Gaussianity. Thanks to this observation, we may perform similar decompositions: for $R_{n,i,k}$ being constructed, we set
\[R_{n,i,k}=A_{n,i,k+1,3} H_3(X_{n,i,k+1})+A_{n,i,k+1,2} H_2(X_{n,i,k+1})+A_{n,i,k+1,1} H_1(X_{n,i,k+1})+R_{n,i,k+1}.\]
There, $A_{n,i,k+1,3}$ is in $\mathcal{W}_0$ hence constant, $A_{n,i,k+1,2}$ is in $\mathcal{W}_1$, and $A_{n,i,k+1,1}$ is in $\mathcal{W}_2$. All are independent of $X_{n,i,k+1}$ which is in $\mathcal{W}_1$ and satisfies $\rho_{1}(R_{n,i,k})\le \|\Gamma\left[R_{n,i,k},X_{n,i,k+1}\right]\|+\frac 1 n$. Setting $F_{n,i,k}\coloneq A_{n,i,k+1,3} H_3(X_{n,i,k+1})+A_{n,i,k+1,2} H_2(X_{n,i,k+1})+A_{n,i,k+1,1} H_1(X_{n,i,k+1})$ one can also write (with the exact same orthogonality and independence as in the quadratic case) for some $r_n\to\infty$
\[ F_{n,i}=\sum_{k=1}^{r_n} F_{n,i,k}+R_{n,i,r_n};\,\text{where}\,\,R_{n,i,r_n}\,\,\text{has Gaussian limit}.\]
Then, in order to conclude the proof, one is left to apply the induction hypothesis to the $2$-admissible sequence $(A_{n,i,k,3},A_{n,i,k,2},A_{n,i,k,1},X_{n,i,k},R_{n,i,l_n}: i,k\ge 1)$ and to truncate the series tails $\sum_{k=K_{\epsilon}+1} ^ {l_n} F_{n,i,k}$ in the exact same manner.
This truncation procedure relies entirely on the fact that $(F_{n,i,k})_{k\ge 1}$ is an orthogonal sequence and this still holds for degree $3$ Wiener chaoses.
For degree strictly higher than $3$, this orthogonality must be replaced by a weaker asymptotic orthogonality but we do not discuss this in the outline.

\section{Preliminaries}
Since we are working with polynomials with random inputs, let us gather some facts about those.

We start by recalling a direct consequence of hypercontractivity for polynomials \cite[\S~3.2]{MOO}.
\begin{theorem}\label{th:polynomials:convergence-moment}
  Let $(P_{n})$ be a sequence of multivariate polynomials of at most $d \in \mathbb{N}$.
  Let $\vec{X} = (X_{i})$ be a sequence of independent and identically distributed random variables, that are centered, with unit variance, and such that $\Esp{\abs{X_{1}}^{p}} < \infty$ for all $p \in \mathbb{N}$.
  Then, if $(P_{n}(\vec{X}))$ converges in law to some $Y_{\infty}$, then
  \begin{equation*}
    \Esp*{ \abs*{ P_{n}(\vec{X}) }^{p} } \xrightarrow[n \to \infty]{} \Esp*{ \abs*{ Y_{\infty} }^{p}}, \qquad p \in \mathbb{N}.
  \end{equation*}
\end{theorem}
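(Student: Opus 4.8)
The plan is to deduce the convergence of moments from the cited hypercontractivity estimate together with a uniform integrability argument. By hypercontractivity for polynomials \cite[\S~3.2]{MOO}, using that the $X_{i}$ are identically distributed with finite moments of all orders, there is for every $p \in \mathbb{N}$ a finite constant $c_{p}$, depending only on $p$, $d$ and the common law of $X_{1}$, such that $\norm{Q(\vec{X})}_{L^{2p}(\prob)} \leq c_{p}\,\norm{Q(\vec{X})}_{L^{2}(\prob)}$ for every real polynomial $Q$ of degree at most $d$ (each $P_{n}$ depends on only finitely many of the $X_{i}$, so the estimate applies to it with a constant independent of $n$). Granting for a moment the uniform bound $M \coloneq \sup_{n}\Esp*{P_{n}(\vec{X})^{2}} < \infty$, hypercontractivity yields $\Esp*{\abs*{P_{n}(\vec{X})}^{2p}} \leq c_{p}^{2p} M^{p}$ for all $n$ and $p$; in particular, for each fixed $p$ the family $\set*{\abs*{P_{n}(\vec{X})}^{p} : n \in \mathbb{N}}$ is bounded in $L^{2}(\prob)$ and hence uniformly integrable. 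Since $P_{n}(\vec{X}) \to Y_{\infty}$ in law and $x \mapsto \abs{x}^{p}$ is continuous, the standard theorem on convergence in distribution together with uniform integrability gives both $\Esp*{\abs*{Y_{\infty}}^{p}} < \infty$ and $\Esp*{\abs*{P_{n}(\vec{X})}^{p}} \to \Esp*{\abs*{Y_{\infty}}^{p}}$, which is the assertion.

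It remains to establish the uniform $L^{2}$ bound, and this is the only genuinely non-trivial point: without the degree constraint, convergence in law to a proper random variable need not entail boundedness of the second moments. I would argue by contradiction. Suppose that, along a subsequence, $\Esp*{P_{n}(\vec{X})^{2}} \to \infty$, and set $Z_{n} \coloneq P_{n}(\vec{X})/\norm{P_{n}(\vec{X})}_{L^{2}(\prob)}$, again a polynomial of degree at most $d$ evaluated at $\vec{X}$, normalised so that $\norm{Z_{n}}_{L^{2}(\prob)} = 1$. By hypercontractivity $\norm{Z_{n}}_{L^{4}(\prob)} \leq c_{2}$, so $\set{Z_{n}^{2}}$ is bounded in $L^{2}(\prob)$, hence uniformly integrable. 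On the other hand $(P_{n}(\vec{X}))$ is tight, being convergent in law; thus for every $\epsilon > 0$ there is $R_{\epsilon}$ with $\sup_{n}\Prob*{\abs*{P_{n}(\vec{X})} > R_{\epsilon}} < \epsilon$, and since $\delta\,\norm{P_{n}(\vec{X})}_{L^{2}(\prob)} \to \infty$ for any fixed $\delta > 0$ we obtain $\Prob*{\abs{Z_{n}} > \delta} = \Prob*{\abs*{P_{n}(\vec{X})} > \delta\,\norm{P_{n}(\vec{X})}_{L^{2}(\prob)}} \leq \epsilon$ for all large $n$. Hence $Z_{n} \to 0$ in probability, so $Z_{n}^{2} \to 0$ in probability, and uniform integrability forces $\Esp*{Z_{n}^{2}} \to 0$, contradicting $\Esp*{Z_{n}^{2}} = 1$. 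Therefore $M < \infty$, and the estimate of the previous paragraph applies.

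I expect no further difficulty: everything outside the uniform $L^{2}$ bound is a routine application of well-known facts (equivalence of the $L^{2}$ and $L^{2p}$ norms on a fixed-degree polynomial space, and the interplay between tightness, uniform integrability, and convergence in distribution). The main obstacle is really the bookkeeping around that uniform second-moment bound; if in some application a uniform $L^{2}$ bound on $(P_{n}(\vec{X}))$ is already at hand, the contradiction argument of the second paragraph can simply be omitted.
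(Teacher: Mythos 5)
Your argument is correct and follows exactly the route the paper intends: the paper gives no proof beyond citing the hypercontractivity estimate of \cite[\S~3.2]{MOO}, and your write-up supplies the standard details — equivalence of $L^{2}$ and $L^{2p}$ norms on the degree-$d$ polynomial space, the uniform second-moment bound obtained by normalising and playing tightness against the $L^{4}$--$L^{2}$ comparison, and the usual uniform-integrability upgrade of convergence in law to convergence of moments. Nothing further is needed.
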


The following is also standard and follows, for instance, from \cite[Prop.~3.5]{MOO}.
\begin{theorem}\label{th:polynomials:closed-L2}
  Let $\vec{X}$ be as above.
  Then the set
  \begin{equation*}
    \set*{ P(\vec{X}) : P \ \text{multivariate polynomial with degree at most $d$ }},
  \end{equation*}
is closed in $L^{2}(\prob)$.
\end{theorem}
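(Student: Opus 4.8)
The plan is to show that the image of the space of polynomials of degree at most $d$ under the map $P \mapsto P(\vec X)$ is a closed subspace of $L^2(\prob)$. The natural strategy is to reduce to a finite-dimensional statement by combining a Hilbert-space orthogonality argument with the hypercontractivity/moment control already available. First I would reduce to polynomials in finitely many variables: fix a sequence $P_n(\vec X)$ converging in $L^2(\prob)$ to some $Y$, and observe that each $P_n$ involves only finitely many of the $X_i$'s, but a priori the number of variables is unbounded in $n$. The key device is the orthogonal (Hoeffding/ANOVA-type) decomposition of $L^2(\prob)$ adapted to the product structure of $\vec X$: write $L^2(\prob) = \bigoplus_{S} H_S$, where $S$ ranges over finite subsets of $\mathbb N$ and $H_S$ consists of the functions depending genuinely on exactly the coordinates in $S$ (the "pure" multilinear-in-$S$ part, allowing powers in each coordinate up to $d$). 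For a polynomial of degree at most $d$, only $S$ with $|S|\le d$ contribute, and the component in $H_S$ is itself the evaluation of a polynomial of degree at most $d$ supported on the variables $S$.

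Next I would exploit that convergence in $L^2(\prob)$ forces convergence of each orthogonal component: if $P_n(\vec X) \to Y$, then for each fixed finite $S$ the projection of $P_n(\vec X)$ onto $H_S$ converges to the projection $Y_S$ of $Y$. Each such projection is the evaluation of a polynomial of degree $\le d$ in the finitely many variables $(X_i)_{i\in S}$; since the space of such polynomials has finite dimension (bounded in terms of $|S|$ and $d$) and the evaluation map is injective up to the finitely many algebraic relations satisfied by the $X_i$ (which, having all moments, span a finite-dimensional but well-understood subspace), the set of their evaluations is a finite-dimensional, hence closed, subspace of $H_S$. Therefore $Y_S$ is again of the form $Q_S((X_i)_{i\in S})$ with $\deg Q_S \le d$. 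The remaining point is to assemble the $Q_S$ into a single polynomial $Q$ of degree $\le d$ with $Q(\vec X) = Y$: one shows $\sum_S \|Y_S\|_2^2 = \|Y\|_2^2 < \infty$ and that the partial sums $\sum_{|S|\le d,\ S\subset\{1,\dots,m\}} Q_S$ are the truncations of a well-defined formal polynomial $Q$ in countably many variables of degree $\le d$, whose evaluation converges in $L^2$ to $Y$; closedness of the degree-$\le d$ polynomial evaluations in finitely many variables at each stage, together with the $\ell^2$ summability of the components, gives that $Q(\vec X)$ is well-defined and equals $Y$.

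The main obstacle I anticipate is the bookkeeping around the "evaluation is essentially injective" step: one must be careful that the $X_i$, even though centered with unit variance and all moments finite, may satisfy nontrivial polynomial relations (e.g. if $X_1$ is a centered Bernoulli, then $X_1^2$ is affine in $X_1$), so the dimension and the precise closed subspace of $H_S$ swept out by degree-$\le d$ polynomials depend on $\law(X_1)$; one should phrase the argument so that it only uses that this subspace is \emph{finite-dimensional} (hence closed) and stable, which holds because $L^2$ of a single real variable with all moments is spanned by the monomials $1, X_1, X_1^2,\dots$, and restricting to degree $\le d$ gives a space of dimension at most $d+1$. A secondary technical point is justifying the exchange of limits when passing from finitely many variables to countably many: this is handled exactly as in the orthogonality/truncation arguments used elsewhere in the paper (cf. the $2$-admissible case), using that the orthogonal components are summable in $L^2$. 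Alternatively, and more economically, one can simply invoke \cite[Prop.~3.5]{MOO} as the authors suggest, which already packages this reduction; the sketch above is the self-contained version of that reference.
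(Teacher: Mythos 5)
Your proposal is correct in outline, but note that the paper gives no proof of this statement at all: it is recorded as standard with a pointer to \cite[Prop.~3.5]{MOO}, so your sketch serves as a self-contained substitute for that citation, and it is essentially the standard argument that the reference packages (an orthogonal decomposition indexed by finite coordinate sets, plus finite-dimensionality within each block). Three points in your write-up need repair, all fixable. First, the blocks $H_{S}$ must be the full Hoeffding/ANOVA spaces, with no restriction on the powers; your parenthetical ``allowing powers in each coordinate up to $d$'' would destroy the completeness $L^{2}(\prob)=\bigoplus_{S}H_{S}$ --- harmlessly so, since the argument only uses that the $H_{S}$ are closed and mutually orthogonal, that every degree-$\leq d$ evaluation lies in $\bigoplus_{\abs{S}\leq d}H_{S}$, and that its projection onto $H_{S}$ is again a degree-$\leq d$ polynomial in $(X_{i})_{i\in S}$, hence ranges in a finite-dimensional closed subspace. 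Second, the assembly of $Q$ cannot be made monomial-by-monomial: a fixed monomial receives contributions from $Q_{S}$ for infinitely many $S$, and the corresponding coefficient of $\Esp*{Y \mid X_{1},\dots,X_{m}}$ need not converge as $m\to\infty$; already for $d=2$, if the law of $X_{1}$ has at least three support points and $T_{2}$ denotes its degree-two orthonormal polynomial (whose constant term is nonzero), the limit $Y=\sum_{i}i^{-1}T_{2}(X_{i})$ has divergent constant coefficient along the truncations, although the block partial sums converge in $L^{2}(\prob)$. So define $Q$ through its stabilized ANOVA blocks $Q_{S}$, or, more cleanly, expand in the orthonormal polynomials $T_{0},\dots,T_{d}$ of $\Law{X_{1}}$ exactly as the paper does in \cref{s:reduction-multilinear}: the degree-$\leq d$ evaluations are then precisely the closed linear span of the orthonormal system $\prod_{j}T_{\alpha_{j}}(X_{j})$ with total degree at most $d$, and closedness is immediate. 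Third, and relatedly, the statement must be read with ``multivariate polynomial'' allowed to involve countably many variables, the evaluation being an $L^{2}(\prob)$-convergent limit of finite truncations (this is the paper's convention elsewhere); without that reading the claim is false, since for instance $\sum_{i}2^{-i}X_{i}$ is an $L^{2}$-limit of linear polynomial evaluations but is not a function of finitely many coordinates.
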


Let also recall the definition if the Wasserstein distance.
Here $d \in \mathbb{N}^{*}$, and $X$ and $Y$ are random vectors in $\mathbb{R}^{d}$.
\begin{equation}\label{eq:def-wasserstein}
  \mathbf{W}_{2}(X,Y) \coloneq \Inf*{ \bracket*{\Esp*{ \abs{\tilde{X} - \tilde{Y}}^{2} }}^{1/2} : \Law{\tilde{X}} = \Law{X},\, \Law{\tilde{Y}} = \Law{Y} },
\end{equation}
where $\abs{\cdot}$ is the Euclidean norm on $\mathbb{R}^{d}$.
Following \cite[Thm.~6.9]{Villani}, the Wasserstein distance metrizes the topology of the convergence in law together with the convergence of the second moment.
Combining with \cref{th:polynomials:convergence-moment}, we obtain the following.

\begin{corollary}\label{th:wasserstein-polynomial}
  For all $d \in \mathbb{N}$ and $l \in \mathbb{N}^{*}$, on the set
  \begin{equation*}
    \set*{ (P_{1}(\vec{X}), \dots, P_{l}(\vec{X})) : (P_{i})_{i} \ \text{multivariate polynomial with degree at most $d$ }},
  \end{equation*}
the topology of the convergence in law is equivalent to that induced by the Wasserstein distance.
\end{corollary}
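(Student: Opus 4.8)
The plan is to obtain the statement as a direct combination of two facts already in place: the metric characterization of $\mathbf{W}_{2}$-convergence from \cite[Thm.~6.9]{Villani}, and the automatic convergence of moments recorded in \cref{th:polynomials:convergence-moment}. Since the weak topology on $\mathcal{P}(\mathbb{R}^{l})$ is metrizable (for instance by the L\'evy--Prokhorov metric) and $\mathbf{W}_{2}$ is itself a metric, both topologies restricted to the set in the statement are metrizable, hence sequential; consequently it is enough to check that a sequence converges for one topology if and only if it converges for the other, with the same limit.

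One inclusion is for free: by \cite[Thm.~6.9]{Villani}, $\mathbf{W}_{2}$-convergence of $\mathbb{R}^{l}$-valued random vectors always implies convergence in law, so on our set the $\mathbf{W}_{2}$-topology is at least as fine as the topology of convergence in law.

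For the reverse inclusion, I would argue as follows. Suppose the vectors $\vec{P}_{n}(\vec{X}) \coloneq (P_{n,1}(\vec{X}), \dots, P_{n,l}(\vec{X}))$, with each $P_{n,i}$ of degree at most $d$, converge in law to some $\mathbb{R}^{l}$-valued vector $\vec{Y} = (Y_{1}, \dots, Y_{l})$. Projecting onto the $i$-th coordinate, $P_{n,i}(\vec{X}) \to Y_{i}$ in law for each $i$. Applying \cref{th:polynomials:convergence-moment} to the scalar polynomials $P_{n,i}$ with $p = 2$ gives $\Esp{P_{n,i}(\vec{X})^{2}} \to \Esp{Y_{i}^{2}}$, and summing over $i \leq l$ yields $\Esp{\abs{\vec{P}_{n}(\vec{X})}^{2}} \to \Esp{\abs{\vec{Y}}^{2}}$, i.e.\ convergence of the second moments. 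Invoking \cite[Thm.~6.9]{Villani} once more---convergence in law together with convergence of second moments is precisely $\mathbf{W}_{2}$-convergence---we conclude $\mathbf{W}_{2}(\vec{P}_{n}(\vec{X}), \vec{Y}) \to 0$.

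I do not expect any real obstacle here. The only two points deserving a word are the reduction to sequences, which uses metrizability of both topologies, and the passage from the scalar moment statement of \cref{th:polynomials:convergence-moment} to the second moment of the $\mathbb{R}^{l}$-valued vector, which is handled by coordinate projection and summation. It is worth stressing that the argument never requires the limit $\vec{Y}$ to be of polynomial form: \cref{th:polynomials:convergence-moment} imposes no constraint on the limit, which is exactly what makes the corollary usable as a black box in the sequel.
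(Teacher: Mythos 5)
Your proposal is correct and follows essentially the same route as the paper, which obtains the corollary precisely by combining \cite[Thm.~6.9]{Villani} with \cref{th:polynomials:convergence-moment}; your coordinate-wise reduction to the scalar moment statement is the natural way to make that combination explicit for vectors.
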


We finish this short section with an elementary lemma that we use several times.
\begin{lemma}\label{th:analytical-lemma}
Consider a sequence of sequences $(u_{n,m})_{n,m\ge 1}$ such that for any $m\ge 1$, $u_{n,m}\xrightarrow[n\to\infty]~0$.
Then, there exists a real sequence $(r_{n})$ such that $r_n\to\infty$ and
\begin{equation*}
\sum_{k=1}^{r_n}\left| u_{n,k}\right|\xrightarrow[n\to\infty]~0.
\end{equation*}
\end{lemma}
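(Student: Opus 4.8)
The plan is to run a standard diagonal extraction, exploiting that a \emph{finite} sum of sequences each tending to $0$ still tends to $0$. Fix $m \geq 1$. Since $u_{n,k} \to 0$ as $n \to \infty$ for each $k \in \{1, \dots, m\}$, the finite sum $\sum_{k=1}^{m} \abs{u_{n,k}}$ also tends to $0$ as $n \to \infty$. Hence there exists an integer $N_m$ such that $\sum_{k=1}^{m} \abs{u_{n,k}} \leq \frac{1}{m}$ for all $n \geq N_m$. Replacing $N_m$ by $\max(N_1, \dots, N_m, m)$ if necessary, we may and do assume that the sequence $(N_m)_{m \geq 1}$ is strictly increasing, so in particular $N_m \to \infty$.

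Next I would define $(r_n)$ by setting $r_n \coloneq 1$ for $n < N_1$, and $r_n \coloneq m$ whenever $N_m \leq n < N_{m+1}$. This is well defined since the intervals $[N_m, N_{m+1})$ partition $\{N_1, N_1+1, \dots\}$, and $r_n \to \infty$ because $N_m \to \infty$ forces $m \to \infty$ as $n \to \infty$. It remains to check the convergence of the truncated sums: for $n$ with $N_m \leq n < N_{m+1}$ we have, by the choice of $N_m$,
\begin{equation*}
  \sum_{k=1}^{r_n} \abs{u_{n,k}} = \sum_{k=1}^{m} \abs{u_{n,k}} \leq \frac{1}{m},
\end{equation*}
and since $m \to \infty$ as $n \to \infty$, the left-hand side tends to $0$.

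There is essentially no obstacle here; the only point requiring a little care is the monotonization of the thresholds $N_m$, which ensures that the induced index map $n \mapsto r_n$ is nondecreasing and unbounded, so that the two requirements ($r_n \to \infty$ and the vanishing of the truncated sum) are met simultaneously. One could equally phrase the same argument by letting $r_n \coloneq \max\set*{ m \leq n : N_m \leq n }$ (with $r_n \coloneq 1$ if the set is empty), which makes the monotonicity manifest and avoids describing the partition explicitly.
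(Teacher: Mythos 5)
Your proof is correct and follows essentially the same diagonal-extraction scheme as the paper: choose thresholds $N_m$ beyond which the first $m$ terms are uniformly small (the paper bounds each term by $\frac{1}{(m+1)^2}$, you bound the finite sum directly by $\frac{1}{m}$, a marginally cleaner step), then set $r_n$ piecewise constant on $[N_m, N_{m+1})$. No gap to report.
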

\begin{proof}
One may build an increasing sequence of integers $(N_i)_{i\ge 1}$ such that for any $n\ge N_i$ we have
\[\sum_{k=1}^i |u_{n,k}|\le \frac{1}{i}.\]
Assume that we have $(N_1,\cdots,N_{i})$ fulfilling the above conditions. There exists $N_{i+1}> N_i$ such that for any $k\in\llbracket 1,i\rrbracket$ and $n\ge N_{i+1}$ we have $|u_{n,k}|\le \frac{1}{(i+1)^2}$. This entails that for any $n\ge N_{i+1}$, $\sum_{k=1}^{i+1}|u_{n,k}|\le \frac{1}{i+1}.$ Then, for any $n\in\llbracket N_i, N_{i+1}\llbracket$ one sets $r_n=i$. By construction, on this integers interval one has $\sum_{k=1}^{r_n} |u_{n,k}|\le \frac{1}{i}$ which concludes the proof.
\end{proof}

\section{Stability of Wiener chaoses}\label{s:proof-wiener}
We prove the following infinite-variate version of \cref{th:stabilite-wiener}.
\begin{theorem}\label{th:stabilite-wiener-multidim}
  Let $(P_{i,n})_{i,n}$ be polynomials of degree at most $d \in \mathbb{N}^{*}$; let $\vec{G}$ be a standard Gaussian vector; and let $\vec{F}_{n}$ be the infinite random vector given by
  \begin{equation*}
    F_{i,n} \coloneq P_{i,n}(\vec{G}), \qquad i,n \in \mathbb{N}.
  \end{equation*}
  Assume that $(\vec{F}_{n})$ converges in law.
  Then, there exist polynomials $(Q_{i})_{i}$ of degree at most $d$ such that
  \begin{equation*}
    \vec{F}_{n} \xrightarrow[n \to \infty]{\law} (Q_{i}(\vec{G}))_{i}.
  \end{equation*}
\end{theorem}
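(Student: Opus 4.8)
The plan is to reduce the statement to one about \emph{$d$-admissible} sequences and then run an induction on $d$, following the scheme already sketched in \cref{s:proof-wiener} for $d\le 3$. For the reduction, given $\vec{F}_{n}=(P_{i,n}(\vec{G}))_{i}$ with $\deg P_{i,n}\le d$, write $J_{p}$ for the orthogonal projection of $L^{2}(\prob)$ onto $\mathcal{W}_{p}$ and view $(J_{p}F_{i,n})_{i,\,0\le p\le d}$ as a single infinite polynomial vector of degree at most $d$. By \cref{th:polynomials:convergence-moment}, convergence in law of $\vec{F}_{n}$ forces convergence of all its moments, so each $\norm{J_{p}F_{i,n}}_{L^{2}(\prob)}$ is bounded in $n$; hypercontractivity then bounds every $L^{r}$-norm of these entries, so each finite sub-collection is tight. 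A diagonal extraction --- harmless, since the whole-sequence limit $\mu$ of $\vec{F}_{n}$ is given, so it is enough to realise $\mu$ along one subsequence --- produces a subsequence along which $\Esp*{(J_{p}F_{i,n})^{2}}\to v_{i,p}$ and $(J_{p}F_{i,n})_{i,p}$ converges in law. Discarding the pairs with $v_{i,p}=0$ (those entries tend to $0$ in $L^{2}$, hence do not affect $\sum_{p}J_{p}F_{i,n}=F_{i,n}$) and normalising the rest to unit $L^{2}$-norm yields a $d$-admissible sequence with all entries in $\bigcup_{p\le d}\mathcal{W}_{p}$. So it suffices to prove the analogue of \cref{th:wiener:stabilite:outline} \emph{for every $d$}: a $d$-admissible sequence converging in law has limit law $\law{\vec{Y}}$ for some $\vec{Y}\in\mathcal{W}_{\le d}^{\mathbb{N}}$. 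Granting that, continuous mapping together with $\norm{J_{p}F_{i,n}}\to v_{i,p}^{1/2}$ gives $F_{i,n}=\sum_{p}J_{p}F_{i,n}\to\sum_{p}v_{i,p}^{1/2}\,\widetilde{Y}_{i,p}$ in law, and $\sum_{p}v_{i,p}^{1/2}\widetilde{Y}_{i,p}\in\mathcal{W}_{\le d}$ since $\mathcal{W}_{\le d}$ is closed (\cref{th:polynomials:closed-L2}) and, by \cref{eq:wiener-decomposition}, equals $\{Q(\vec{G}):\deg Q\le d\}$, which delivers the desired $Q_{i}$.

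For the induction, the base case $d=1$ is as in \cref{s:proof-wiener}: build inductively a sequence $\vec{N}\in\mathcal{W}_{1}^{\mathbb{N}}$ carrying the limiting covariance of $\vec{F}_{n}$, and conclude $\vec{F}_{n}\to\vec{N}$ from the multivariate Peccati--Tudor criterion. For the step $d-1\Rightarrow d$, let $I$ be the set of indices $i$ with $p_{i}\ge 2$ for which $(F_{i,n})_{n}$ is not asymptotically Gaussian. For each $i\in I$, the directional-influence criterion (\cref{th:normal-convergence}, in the form of \cref{th:wiener:normal-convergence}) gives $\rho_{k}(F_{i,n})\not\to 0$ for every $k\in\{\floor*{p_{i}/2},\dots,p_{i}-1\}$; fixing such a $k$ and extracting, one finds $\delta_{i}>0$ and near-optimal directions $X_{i,n}\in\mathcal{W}_{k}$ with $\norm{\Gamma(F_{i,n},X_{i,n})}\ge\delta_{i}$, which may be taken asymptotically Gaussian --- automatic when $k=1$ (as is forced for $d\le 3$), and otherwise arranged by first factoring influential directions out of the lower-degree $X_{i,n}$. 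Using the asymptotic-independence theory (\cref{th:wiener:asymptotic-independence}) one expands $F_{i,n}$ in Hermite polynomials of $X_{i,n}$ as $F_{i,n}=\sum_{j\ge 1}A_{i,n,j}H_{j}(X_{i,n})+R_{i,n}$, where the $A_{i,n,j}$ have degree $<d$ and are asymptotically independent of $X_{i,n}$ while $R_{i,n}$ keeps degree $p_{i}$ but strictly smaller directional influence; this is precisely the mechanism of \cref{principle}, quantified via \cref{th:super-lemme}. Iterating, and choosing the number of iterations $r_{n}\to\infty$ slowly enough via \cref{th:analytical-lemma}, one writes $F_{i,n}=\sum_{k=1}^{r_{n}}F_{i,n,k}+R_{i,n,r_{n}}$ with $R_{i,n,r_{n}}$ asymptotically Gaussian, each $F_{i,n,k}$ a polynomial in the produced lower-degree pieces, and $\sum_{k}\Esp*{F_{i,n,k}^{2}}\le 1+o(1)$. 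The infinite vector gathering all the $A$'s, $X$'s and terminal $R$'s is then $(d-1)$-admissible, and the induction hypothesis (after one further extraction) gives it a limit realised in $\mathcal{W}_{\le d-1}^{\mathbb{N}}$.

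It remains to reassemble. Set $F_{\infty,i}:=\sum_{k}F_{\infty,i,k}+R_{\infty,i}$ from the $(d-1)$-limit; since for Gaussian polynomials moments pass to the limit (\cref{th:polynomials:convergence-moment}), the $F_{n,i,k}$ remain (asymptotically) orthogonal, so $\sum_{k}\Esp*{F_{\infty,i,k}^{2}}\le 1$ by weak lower semicontinuity, the series converges in $L^{2}$, and $F_{\infty,i}\in\mathcal{W}_{\le d}$ by closedness of that space. To obtain $\vec{F}_{n}\to(F_{\infty,i})_{i}$ in law, fix a finite $J\subset\mathbb{N}$ and $\epsilon>0$; using $\sum_{k}v_{i,k}\le 1$ and \cref{th:analytical-lemma}, pick a truncation level $K_{\epsilon}\le l_{n}$ making $\sum_{k>K_{\epsilon}}F_{n,i,k}$ and $\sum_{k>K_{\epsilon}}F_{\infty,i,k}$ uniformly $L^{2}$-small over $i\in J$ and large $n$, and bound $\mathbf{W}_{2}\bigl((F_{n,j})_{j\in J},(F_{\infty,j})_{j\in J}\bigr)$ by $\epsilon$ plus the $\mathbf{W}_{2}$-distance of the two truncated vectors (\cref{th:wasserstein-polynomial}); the latter involves only finitely many lower-degree pieces and so tends to $0$ by the induction hypothesis, whence the claim follows on letting $n\to\infty$ and then $\epsilon\to 0$.

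I expect the main obstacle to be the step underlying \cref{principle}: for arbitrary degree $d$, producing the Hermite expansion along an influential, asymptotically Gaussian direction with \emph{honest} asymptotic independence of the coefficients and a quantitative drop of the seminorm $\rho_{k}$. This requires the full directional-influence machinery --- in particular $\rho_{k}$ for $k>1$ once $d\ge 4$, not merely $\rho_{1}$ --- together with the associated asymptotic-independence and dimensional-reduction results. By contrast, the reassembly above is a delicate but essentially routine adaptation of the $d\le 3$ argument, the only genuinely new feature being that the exact orthogonality of the $F_{n,i,k}$ used there must be weakened to asymptotic orthogonality.
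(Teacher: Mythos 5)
Your overall architecture coincides with the paper's: reduction to admissible sequences via the chaos projections, induction on $d$ with the $d=1$ base case handled through Peccati--Tudor, and reassembly of the limit by truncating the series and comparing truncated vectors in the Wasserstein distance using \cref{th:wasserstein-polynomial,th:analytical-lemma}. That reduction and reassembly are correct and essentially identical to what the paper does. The problem is that the entire content of the induction step is deferred: the existence, for a non-asymptotically-Gaussian sequence in $\mathcal{W}_{p}$, of an asymptotically Gaussian direction along which one can expand in Hermite polynomials with coefficients that are honestly asymptotically independent, together with the asymptotic orthogonality of the pieces $F_{n,i,k}$ and the asymptotic Gaussianity of the remainder along a slowly growing number of iterations. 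You name this yourself as ``the main obstacle''; in the paper it occupies the decomposition lemmas \cref{th:wiener:decomposition:strongest-influence-gaussian,th:wiener:decomposition:once,th:wiener:decomposition:influence,th:wiener:decomposition:independence,th:decomposition:iteration,th:wiener:decomposition:remainder-gaussian}, and nothing in your proposal substitutes for them.

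Moreover, the route you sketch for that missing step would not work as stated. You fix an arbitrary $k \in \set*{\floor{p_{i}/2},\dots,p_{i}-1}$ with $\rho_{k}(F_{i,n}) \not\to 0$ and assert that near-optimal directions $X_{i,n}\in\mathcal{W}_{k}$ ``may be taken asymptotically Gaussian \dots by first factoring influential directions out of the lower-degree $X_{i,n}$''. The paper instead works with the \emph{minimal} degree $q \leq \floor{p/2}$ such that $\rho_{q}(F_{n})\not\to 0$, and this minimality is what makes everything run: in \cref{th:wiener:decomposition:strongest-influence-gaussian} the asymptotic normality of a near-optimal $X_{n}$ is proved by contradiction, factoring a direction $Y_{n}$ out of $X_{n}$ and deriving $\rho_{q}(F_{n}) \leq (1-\delta'^{2})^{1/2}\rho_{q}(F_{n})$, where the cross terms $\Gamma(F_{n},Z_{n})$ with $Z_{n}$ of degree $<q$ are negligible \emph{precisely because $q$ is minimal}; for a generic $k$ of your kind no such contradiction is available, and without asymptotic normality of $X_{i,n}$ the asymptotic-independence machinery (\cref{th:wiener:gamma-asymptotic-chaos,th:wiener:asymptotic-independence}) that produces the Hermite expansion with independent coefficients cannot be invoked at all. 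Your claim that the remainder ``keeps degree $p_{i}$ but strictly smaller directional influence'' is also not what holds or what is needed: the paper proves a variance drop $\norm{A_{n,0}}_{L^{2}(\prob)} \leq 1-c_{p,q}\delta$ together with preservation of the vanishing of $\rho_{r}$ for $r<q$ (\cref{th:wiener:decomposition:influence}), and the Gaussianity of $R_{n,k_{n}}$ is recovered only along slowly growing $k_{n}$ via summability of the limiting variances (\cref{th:wiener:decomposition:remainder-gaussian}), not via a per-step decrease of a seminorm. Finally, appealing to \cref{th:super-lemme} to ``quantify'' the mechanism is circular: in the paper that result is a consequence of the decomposition machinery underlying \cref{th:stabilite-wiener-multidim}, not an input to it.
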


\subsection{Reminders on the Wiener space}\label{s:wiener:reminder}
In this section, we review the necessary materials, and only the necessary materials, regarding Wiener chaoses.
Readers interested in a broader introduction or the most general definitions should read \cite{BouleauHirsch,Janson,Nualart,NourdinPeccatiBlueBook} for thorough introduction on the subjects.
\subsubsection{The Wiener space}
In this section, we work on the \emph{Wiener space} which is the probability space
\begin{equation*}
  (\Omega, \mathfrak{W}, \prob) \coloneq (\mathbb{R}, \mathfrak{B}(\mathbb{R}), \gamma)^{\otimes \infty},
\end{equation*}
where $\mathfrak{B}(\mathbb{R})$ is the Borel $\sigma$-algebra of $\mathbb{R}$, and $\gamma = \mathcal{N}(0,1)$ is the standard Gaussian distribution.
We equip it with the canonical coordinates process
\begin{equation*}
  G_{i}(\omega) \coloneq \omega_{i}, \qquad \omega \in \Omega, \qquad i \in \mathbb{N},
\end{equation*}
in a way that the vector $\vec{G} = (G_{i})$ is an infinite-vector of independent standard Gaussian variables.
The Wiener space is invariant under orthogonal transformation, that is if $\mathsf{A} \in \mathscr{O}(\ell^{2})$ then $\mathsf{A} \vec{G}$ has the same law as $\vec{G}$, and all the definitions below are also independent of the choice of the basis.

\subsubsection{Wiener chaoses}
Recall that we have defined $\mathcal{W}_{d}$, the Wiener chaos of degree $d$ in \cref{eq:def-wiener-chaos}, as well as $\mathcal{W}_{\leq d}$, the sum of chaos of degree at most $d$, which coincides with the space of Gaussian polynomials of degree at most $d$.
From the definition of $\mathcal{W}_{\leq d}$ in terms of polynomials, we see that
\begin{equation}\label{eq:wiener:multiplication}
  \bracket*{F \in \mathcal{W}_{\leq m}, X \in \mathcal{W}_{\leq d}} \Longrightarrow FX \in \mathcal{W}_{\leq (d + m)}.
\end{equation}
For $F \in \mathcal{W}_{\leq d}$, we write $\mathsf{J}_{m}F$ for its orthogonal projection onto $\mathcal{W}_{m}$.
The Hermite polynomials form an orthonormal basis of $L^{2}(\gamma)$, from which we see that
\begin{equation*}
  \mathcal{W}_{\leq d} \subset \bigcap_{p < \infty} L^{p}(\prob).
\end{equation*}
Actually on Wiener chaoses, all the $L^{p}$-norms are equivalent.
Namely, following \cite[\S 3.1.3]{HMPSuperConvergence}, for $d \in \mathbb{N}$, and $1 \leq p \leq q < 1$, there exists $c = c_{m,p,q}$ such that
\begin{equation}\label{eq:wiener:norm-equivalence}
  \norm{F}_{q} \leq c \norm{F}_{p}, \qquad F \in \mathcal{W}_{\leq d}.
\end{equation}

From this we obtain the following characterization.
\begin{lemma}\label{th:wiener:convergence-in-law-moments}
  Let $K \subset \mathcal{W}_{\leq d}$.
  The following are equivalent.
  \begin{enumerate}[(i)]
    \item $K$ is bounded in some $L^{p}(\prob)$ ($p < \infty$).
    \item $K$ is relatively compact for the topology of the convergence in law and the convergence of all moments.
    \item $K$ is relatively compact for the topology of the Wasserstein distance.
  \end{enumerate}
\end{lemma}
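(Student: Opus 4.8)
The plan is to establish the cycle of implications (i) $\Rightarrow$ (ii) $\Rightarrow$ (iii) $\Rightarrow$ (i), the decisive structural input being the hypercontractive norm equivalence \eqref{eq:wiener:norm-equivalence}, which upgrades an $L^{p}$-bound for a single $p$ into an $L^{q}$-bound for every $q < \infty$.

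For (i) $\Rightarrow$ (ii): suppose $\sup_{F \in K} \norm{F}_{p} < \infty$ for some $p < \infty$. By \eqref{eq:wiener:norm-equivalence}, $M_{q} \coloneq \sup_{F \in K}\norm{F}_{q} < \infty$ for \emph{every} $q < \infty$. In particular $K$ is bounded in $L^{2}(\prob)$, so Markov's inequality makes the family of laws $\set{\Law{F} : F \in K}$ tight, and Prokhorov's theorem yields relative compactness for the convergence in law. It then remains to upgrade an arbitrary weakly convergent subsequence to convergence of all moments: if $F_{n} \in K$ and $F_{n} \xrightarrow[n \to \infty]{\law} \mu$, then for each $k \in \mathbb{N}$ the families $\set{F_{n}^{k} : n}$ and $\set{\abs{F_{n}}^{k} : n}$ are bounded in $L^{2}(\prob)$, since $\norm{F_{n}^{k}}_{2} = \norm{F_{n}}_{2k}^{k} \leq M_{2k}^{k}$; hence they are uniformly integrable, and combined with the weak convergence this forces $\Esp{F_{n}^{k}} \to \int x^{k}\,\mathrm{d}\mu$ and $\Esp{\abs{F_{n}}^{k}} \to \int \abs{x}^{k}\,\mathrm{d}\mu$, both finite by Fatou. (Alternatively, since each $F_{n} = P_{n}(\vec{G})$ with $\deg P_{n} \leq d$, the moment convergence follows directly from \cref{th:polynomials:convergence-moment}.) Thus $K$ is relatively compact for convergence in law together with convergence of all moments.

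For (ii) $\Rightarrow$ (iii) and (iii) $\Rightarrow$ (i): convergence in law together with convergence of all moments implies in particular convergence in law together with convergence of second moments, which by \cite[Thm.~6.9]{Villani} — recalled just before \cref{th:wasserstein-polynomial} — is precisely convergence for $\mathbf{W}_{2}$; hence relative compactness of $K$ in the sense of (ii) entails relative compactness for $\mathbf{W}_{2}$, giving (iii). Conversely, if $\set{\Law{F} : F \in K}$ is relatively compact in $(\mathcal{P}_{2}(\mathbb{R}), \mathbf{W}_{2})$, it is totally bounded, hence bounded, so $\mathbf{W}_{2}(\Law{F}, \delta_{0}) \leq R$ for some $R < \infty$ and all $F \in K$; since $\delta_{0}$ admits a unique coupling with any law, $\mathbf{W}_{2}(\Law{F}, \delta_{0})^{2} = \Esp{F^{2}} = \norm{F}_{2}^{2}$, and therefore $\sup_{F \in K}\norm{F}_{2} \leq R$, which is (i) with $p = 2$.

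The steps involving tightness, Prokhorov, and the Wasserstein-to-point-mass identity are entirely routine; the only genuinely load-bearing ingredient is the norm equivalence \eqref{eq:wiener:norm-equivalence}, without which a bound in a low $L^{p}$ would give no grip on higher moments and the uniform-integrability argument in (i) $\Rightarrow$ (ii) would break down. The remaining care is purely bookkeeping: reading ``relative compactness'' throughout as a statement about the set of laws $\set{\Law{F} : F \in K}$ inside $\mathcal{P}_{2}(\mathbb{R})$, and checking that every limiting measure encountered has all moments finite, which Fatou supplies.
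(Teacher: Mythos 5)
Your proof is correct and takes essentially the approach the paper intends: the paper states \cref{th:wiener:convergence-in-law-moments} as a direct consequence of the hypercontractive norm equivalence \eqref{eq:wiener:norm-equivalence} without writing out the details, and your cycle (i) $\Rightarrow$ (ii) $\Rightarrow$ (iii) $\Rightarrow$ (i) — uniform moment bounds from \eqref{eq:wiener:norm-equivalence}, tightness plus uniform integrability, and the identity $\mathbf{W}_{2}(\Law{F},\delta_{0})^{2}=\Esp{F^{2}}$ — is the natural fleshing-out of that. Nothing to correct.
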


\begin{remark}
  Our \cref{th:stabilite-wiener} actually completes this result by stating that $L^{p}$ balls in $\mathcal{W}_{\leq d}$ are actually compact for the topology of the Wasserstein distance.
\end{remark}

\subsubsection{Differential operators on the Wiener spaces}
Let us define the \emph{Ornstein--Uhlenbeck generator}
\begin{equation*}
  \mathsf{L} F \coloneq \sum_{m=0}^{d} (-d) \mathsf{J}_{m} F, \qquad F \in \mathcal{W}_{\leq d}.
\end{equation*}
The operator $\mathsf{L} \colon \mathcal{W}_{\leq d} \to \mathcal{W}_{\leq d}$ is continuous for the $L^{2}(\prob)$-topology, and its eigenspaces are exactly $\mathcal{W}_{m}$.
Related, to $\mathsf{L}$, we define the \emph{carré du champ} operator
\begin{equation*}
  \Gamma[F,X] \coloneq \frac{1}{2} \paren*{ \mathsf{L}(FX) - F\mathsf{L}X - X\mathsf{L}F }, \qquad F,X \in \mathcal{W}_{\leq d}.
\end{equation*}
For Wiener chaoses, the carré du champ has a particularly simple form
\begin{equation}\label{eq:wiener:gamma-chaos}
  \Gamma(F, X) = \frac{1}{2} (\mathsf{L} + p + q)(FX), \qquad F \in \mathcal{W}_{p}, X \in \mathcal{W}_{q}.
\end{equation}
Similarly to \cref{eq:wiener:multiplication}, the carré du champ satisfies a multiplication property
\begin{equation}\label{eq:wiener:multiplication-gamma}
  \bracket*{ F \in \mathcal{W}_{\leq d}, G \in \mathcal{W}_{\leq m} } \Longrightarrow \Gamma[F,G] \in \mathcal{W}_{\leq (d+m-2)}.
\end{equation}
\subsubsection{Smooth random variables}
We define the space $\mathbb{D}^{\infty}$ as the closure of $\cup_{m \in \mathbb{N}} \mathcal{W}_{\leq m}$ for the family of seminorms
\begin{equation*}
  F \mapsto \norm{\mathsf{L}^{k} F}_{L^{p}(\prob)}, \qquad k \in \mathbb{N}, p \in (1,\infty).
\end{equation*}
It is known that $\mathbb{D}^{\infty}$ is an algebra stable under $\mathsf{L}$, $\Gamma$, and composition with smooth functions with polynomial growth.
This allows to state the important \emph{chain rule formula} for $\Gamma$
\begin{equation}\label{eq:wiener:chain-rule}
  \Gamma[\varphi(F), G] = \varphi'(F) \Gamma[F,G], \qquad F,G \in \mathbb{D}^{\infty}, \varphi \in \mathscr{C}^{\infty}_{pol}.
\end{equation}
In particular, the carré du champ can be understood as the square $\ell^{2}$-norm of the infinite-dimensional gradient
\begin{equation}\label{eq:wiener:gamma-ell-2}
  \Gamma(\varphi(\vec{G}), \psi(\vec{G})) = \sum_{i \in \mathbb{N}} \partial_{i}\varphi(\vec{G}) \partial_{i} \psi(\vec{G}), \qquad \varphi,\psi \in \mathscr{C}^{\infty}_{pol}.
\end{equation}
Finally, the following \emph{integration by parts formula} plays a crucial role in our analysis
\begin{equation}\label{eq:wiener:ipp}
  - \Esp*{F \mathsf{L}G} = \Esp*{ \Gamma[F,G] }, \qquad F,G \in \mathscr{C}^{\infty}_{pol}.
\end{equation}

An immediate consequence of the definition and the properties is the following result that we use repeatedly.
\begin{lemma}\label{th:algebraic-lemma}
  Let $(F,G,H)\in\mathcal{W}_p\times \mathcal{W}_q\times\mathcal{W}_r$ with $(p,q,r)\in\mathbb{N}^3$.
  Then,
  \begin{equation}\label{eq-lem-alg}
    \Esp*{ \Gamma[F,G] H} = \frac{p+q-r}{2} \Esp*{ F G H}.
  \end{equation}
\end{lemma}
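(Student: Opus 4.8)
The plan is to compute $\Esp{\Gamma[F,G]H}$ by expanding the carré du champ via its definition in terms of the Ornstein–Uhlenbeck generator, namely $\Gamma[F,G] = \frac12(\mathsf{L}(FG) - F\mathsf{L}G - G\mathsf{L}F)$, and then to use the integration by parts formula \eqref{eq:wiener:ipp} together with the fact that $\mathsf{L}$ acts as multiplication by $-m$ on $\mathcal{W}_m$.

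First I would write
\begin{equation*}
  \Esp*{\Gamma[F,G]H} = \frac12 \Esp*{H\,\mathsf{L}(FG)} - \frac12 \Esp*{HF\,\mathsf{L}G} - \frac12 \Esp*{HG\,\mathsf{L}F}.
\end{equation*}
Since $G \in \mathcal{W}_q$ and $F \in \mathcal{W}_p$, we have $\mathsf{L}G = -qG$ and $\mathsf{L}F = -pF$, so the last two terms are $\frac{q}{2}\Esp{FGH}$ and $\frac{p}{2}\Esp{FGH}$ respectively. For the first term, I would apply the integration by parts formula \eqref{eq:wiener:ipp} in the form $\Esp{H\,\mathsf{L}(FG)} = -\Esp{\Gamma[H, FG]}$; but more directly, since $H \in \mathcal{W}_r$, one can instead use the symmetry $\Esp{H\,\mathsf{L}(FG)} = \Esp{(FG)\,\mathsf{L}H} = -r\,\Esp{FGH}$, where the first equality is the self-adjointness of $\mathsf{L}$ (which follows from \eqref{eq:wiener:ipp} since $-\Esp{F\mathsf{L}G} = \Esp{\Gamma[F,G]} = \Esp{\Gamma[G,F]} = -\Esp{G\mathsf{L}F}$). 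Combining the three contributions gives
\begin{equation*}
  \Esp*{\Gamma[F,G]H} = -\frac{r}{2}\Esp*{FGH} + \frac{q}{2}\Esp*{FGH} + \frac{p}{2}\Esp*{FGH} = \frac{p+q-r}{2}\Esp*{FGH},
\end{equation*}
which is exactly \eqref{eq-lem-alg}.

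There is no serious obstacle here; this is a routine manipulation. The only point requiring a word of care is the justification that all the expectations are finite and that the integration by parts / self-adjointness identities apply: this is fine because $F, G, H$ lie in fixed Wiener chaoses, hence in $\mathbb{D}^\infty \subset \bigcap_{p<\infty} L^p(\prob)$ by the hypercontractivity estimate \eqref{eq:wiener:norm-equivalence}, so products like $FGH$ are integrable and $FG, FGH \in \mathbb{D}^\infty$, making \eqref{eq:wiener:ipp} legitimately applicable. One could alternatively invoke \eqref{eq:wiener:gamma-chaos}, which states $\Gamma(F,G) = \frac12(\mathsf{L}+p+q)(FG)$, multiply by $H$, take expectations, and use $\Esp{H\,\mathsf{L}(FG)} = -r\,\Esp{FGH}$ as above — this is the same computation packaged slightly differently, and either route is acceptable for a short proof.
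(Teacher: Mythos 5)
Your proof is correct and follows essentially the same route as the paper: rewrite $\Gamma[F,G]$ via the Ornstein--Uhlenbeck generator (equivalently, use \eqref{eq:wiener:gamma-chaos}), use that $\mathsf{L}$ acts as multiplication by $-p$, $-q$, $-r$ on the respective chaoses, and handle the $\Esp*{H\,\mathsf{L}(FG)}$ term by self-adjointness/integration by parts \eqref{eq:wiener:ipp}. The integrability remark via \eqref{eq:wiener:norm-equivalence} is a fine (if tacit in the paper) addition.
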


\begin{proof}
  By definition of $\Gamma$ and the fact that $F$ and $G$ are chaotic, we find
  \begin{equation*}
    \Gamma[F,G] = \frac{1}{2} \paren*{ \mathsf{L} + p+q }(FG).
  \end{equation*}
  By \cref{eq:wiener:ipp},
  \begin{equation*}
    \Esp*{ H \mathsf{L}(FG) } = \Esp*{ FG \mathsf{L} H } = - r \Esp*{ FGH }.
  \end{equation*}
\end{proof}

We also need the following inequality.
\begin{lemma}\label{th:wiener:spectral-inequality}
  Let $X \in \mathcal{W}_{p}$ and $Y \in \mathcal{W}_{q}$.
  Then
  \begin{equation*}
     \Esp*{ \Gamma(X,Y)^{2} } \leq \frac{p+q}{4} \Esp*{ XY \Gamma(X,Y) }.
  \end{equation*}
\end{lemma}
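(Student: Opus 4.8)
The plan is to prove the spectral inequality by expanding both sides in the Wiener chaos decomposition and comparing the contributions chaos-by-chaos. First I would invoke \cref{eq:wiener:gamma-chaos}, which gives $\Gamma(X,Y) = \tfrac12(\mathsf{L}+p+q)(XY)$. Writing $Z \coloneq XY$ and decomposing $Z = \sum_{m} \mathsf{J}_{m}Z$ along the chaoses, the operator $\tfrac12(\mathsf{L}+p+q)$ acts on $\mathcal{W}_{m}$ as multiplication by the scalar $\lambda_{m} \coloneq \tfrac{(p+q)-m}{2}$; note that by \cref{eq:wiener:multiplication} only the chaoses with $0 \le m \le p+q$ appear, so $\lambda_{m}$ ranges over $\set*{0, \tfrac12, 1, \dots, \tfrac{p+q}{2}}$ and in particular $0 \le \lambda_{m} \le \tfrac{p+q}{2}$ for every $m$ that contributes. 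Consequently $\Esp*{\Gamma(X,Y)^{2}} = \sum_{m} \lambda_{m}^{2}\, \norm{\mathsf{J}_{m}Z}^{2}$, and since each $\lambda_{m}$ is controlled by $\tfrac{p+q}{4}\cdot 2\lambda_{m}$ — more precisely $\lambda_{m}^{2} \le \tfrac{p+q}{4}\cdot 2\lambda_{m}$ because $2\lambda_{m} \le p+q$ when $\lambda_{m}\ge 0$ — we get $\Esp*{\Gamma(X,Y)^{2}} \le \tfrac{p+q}{4}\sum_{m} (2\lambda_{m})\,\norm{\mathsf{J}_{m}Z}^{2}$.

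The remaining step is to identify $\sum_{m} (2\lambda_{m})\,\norm{\mathsf{J}_{m}Z}^{2}$ with $\Esp*{XY\,\Gamma(X,Y)} = \Esp*{Z\,\Gamma(X,Y)}$. Indeed, since $\Gamma(X,Y) = \sum_{m}\lambda_{m}\mathsf{J}_{m}Z$ and the chaoses are mutually orthogonal in $L^{2}(\prob)$, we have $\Esp*{Z\,\Gamma(X,Y)} = \sum_{m}\lambda_{m}\norm{\mathsf{J}_{m}Z}^{2}$, which is exactly $\tfrac12\sum_{m}(2\lambda_{m})\norm{\mathsf{J}_{m}Z}^{2}$. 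Wait — this produces $\Esp*{\Gamma(X,Y)^{2}} \le \tfrac{p+q}{2}\Esp*{Z\,\Gamma(X,Y)}$, a factor of $2$ off. The fix is to be less wasteful: one should instead use $\lambda_{m}^{2} \le \tfrac{p+q}{4}\cdot\lambda_{m}$ directly, which holds because $\lambda_{m} \le \tfrac{p+q}{2} \le p+q$, wait that only gives $\lambda_m^2 \le \tfrac{p+q}{2}\lambda_m$. The genuinely sharp bound comes from $\lambda_{m} \le \tfrac{p+q}{2}$, hence $\lambda_{m}^{2}\le \tfrac{p+q}{4}\lambda_{m}\cdot\tfrac{2\lambda_m}{p+q}\cdot(p+q)/\lambda_m$... let me restate cleanly: from $0\le \lambda_m \le \tfrac{p+q}{2}$ one has $\lambda_m^2 = \lambda_m\cdot\lambda_m \le \tfrac{p+q}{2}\lambda_m$, giving the constant $\tfrac{p+q}{2}$; to reach $\tfrac{p+q}{4}$ one exploits that the top chaos $m=0$ contributes $\lambda_0=\tfrac{p+q}{2}$ but with $\norm{\mathsf{J}_0 Z}^2 = \Esp*{XY}^2 = 0$ when $p\ne q$ and is handled separately when $p=q$, so the effective maximum of $\lambda_m$ over contributing chaoses with $m\ge 1$ is $\tfrac{(p+q)-1}{2} \le \tfrac{p+q}{2}$ — still not enough on its own.

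The real point, and the step I expect to be the main obstacle, is that the naive chaos-by-chaos bound $\lambda_m^2 \le C\lambda_m$ with $C = \tfrac{p+q}{4}$ is simply false for the individual $m$ near $0$, so the inequality is \emph{not} a term-by-term domination; it must use the structure of which chaoses actually appear in the product $XY$ with nonzero mass. By the product formula for Wiener chaoses, $\mathsf{J}_m(XY)$ is nonzero only for $m \in \set*{|p-q|, |p-q|+2, \dots, p+q}$, i.e. $m \equiv p+q \pmod 2$, so $\lambda_m = \tfrac{(p+q)-m}{2}$ is a nonnegative \emph{integer} and the contributing values are $\lambda_m \in \set*{0,1,\dots,\min(p,q)}$. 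In particular $\lambda_m \le \min(p,q) \le \tfrac{p+q}{2}$, and crucially $\lambda_m^2 \le \min(p,q)\lambda_m \le \tfrac{p+q}{2}\lambda_m$ — still $\tfrac{p+q}{2}$. Hence I would conclude that either the intended constant is actually $\tfrac{p+q}{2}$ (and the stated $\tfrac{p+q}{4}$ reflects a sharper but more delicate estimate, perhaps using $\lambda_m \le \min(p,q)$ together with $\min(p,q) \le \tfrac{p+q}{2}$ with equality only at $p=q$, combined with a separate treatment of the extreme chaos), or one uses \cref{th:algebraic-lemma} to rewrite $\Esp*{XY\Gamma(X,Y)}$ and re-run the comparison. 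In any case the argument is: expand via \cref{eq:wiener:gamma-chaos}, use orthogonality of chaoses to turn both sides into spectral sums $\sum_m \lambda_m^2 \norm{\mathsf{J}_m(XY)}^2$ and $\sum_m \lambda_m\norm{\mathsf{J}_m(XY)}^2$, and bound the former by a multiple of the latter using the a priori bound on the eigenvalues $\lambda_m$ arising from the product formula; keeping track of the exact best constant is the delicate part.
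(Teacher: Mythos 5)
Your plan is essentially the paper's own argument: both rest on the identity $\Gamma(X,Y)=\tfrac12(\mathsf{L}+p+q)(XY)$ from \cref{eq:wiener:gamma-chaos} and on comparing the two spectral sums $\sum_m \lambda_m^2\norm{\mathsf{J}_{m}(XY)}^2$ and $\sum_m\lambda_m\norm{\mathsf{J}_{m}(XY)}^2$, where $\lambda_m=\tfrac{p+q-m}{2}\ge 0$ on $\mathcal{W}_{\leq (p+q)}$ (the paper phrases this as: $\mathsf{L}$ is non-positive while $\mathsf{L}+p+q$ is non-negative there). Moreover, the doubt you raise about the constant is justified and is not a gap in your reasoning: with the factor $\tfrac{p+q}{4}$ the lemma as stated is false. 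Take $X=Y=G_1$, so $p=q=1$ and $\Gamma(X,Y)=1$: then $\Esp*{\Gamma(X,Y)^2}=1$ while $\tfrac{p+q}{4}\Esp*{XY\,\Gamma(X,Y)}=\tfrac12$.

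The discrepancy comes from an algebra slip in the paper's displayed identity: writing $Z\coloneq XY$, self-adjointness of $\mathsf{L}$ gives $\Esp*{\Gamma(X,Y)^2}=\tfrac14\Esp*{Z\,\mathsf{L}(\mathsf{L}+p+q)Z}+\tfrac{p+q}{4}\Esp*{Z\,(\mathsf{L}+p+q)Z}$, and since $(\mathsf{L}+p+q)Z=2\Gamma(X,Y)$ the last term equals $\tfrac{p+q}{2}\Esp*{XY\,\Gamma(X,Y)}$, not $\tfrac{p+q}{4}\Esp*{XY\,\Gamma(X,Y)}$; the first term is $\tfrac14\sum_m(-m)(p+q-m)\norm{\mathsf{J}_{m}Z}^2\le 0$, which is exactly your eigenvalue bound $0\le\lambda_m\le\tfrac{p+q}{2}$. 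So the correct conclusion is $\Esp*{\Gamma(X,Y)^2}\le\tfrac{p+q}{2}\Esp*{XY\,\Gamma(X,Y)}$, which is precisely what your term-by-term bound $\lambda_m^2\le\tfrac{p+q}{2}\lambda_m$ proves; your observation that the product formula restricts the contributing eigenvalues to integers $\lambda_m\in\set*{0,\dots,\min(p,q)}$ even sharpens the constant to $\min(p,q)$, and the $p=q=1$ example shows this is optimal, so no refinement can recover $\tfrac{p+q}{4}$. None of this affects the rest of the paper: where the lemma is invoked (in the proofs of \cref{th:wiener:normal-convergence} and \cref{th:wiener:decomposition:strongest-influence-gaussian}) only the existence of some positive constant depending on the degrees is used. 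In short, your proof is correct and follows the paper's route; it is the stated constant that should read $\tfrac{p+q}{2}$.
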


\begin{proof}
  Using \cref{eq:wiener:gamma-chaos,eq:wiener:ipp}, we find that
  \begin{equation*}
  \Esp*{ \Gamma(X,Y)^{2} } = \frac{1}{4} \Esp*{ (XY) \mathsf{L} (\mathsf{L} + p + q)(XY) } + \frac{1}{4} (p+q) \Esp*{(XY) \Gamma(X,Y) }.
\end{equation*}
By definition, $\mathsf{L}$ is a non-positive operator, and, on the other, by \cref{eq:wiener:multiplication}, $XY \in \mathcal{W}_{\leq (p+q)}$ on which $\mathsf{L} + p + q$ is a non-negative operator.
Thus, the first term on the right-hand side is non-positive, and this yields the inequality.
\end{proof}

\subsection{Directional influences: a new criterion for asymptotic normality on Wiener chaoses}\label{s:influences}

We introduce the \emph{directional influences of degree $k$}, we new tool we develop to study convergence in law for a sequence of random variables in a Wiener chaos of fixed degree.
We use them to formulate a new necessary and sufficient condition for such a sequence to have a Gaussian limit.
This criterion, of independent interest, plays a prominent role in our approach.

\begin{definition}\label{df:directional-influence}
  Let $q \in \mathbb{N}^{*}$, the \emph{directional influence of degree $q$} is defined as
  \begin{equation}\label{eq:directional-influence}
    \rho_{q}(F) = \Sup*{ \norm{ \Gamma(F,X) : X \in \mathcal{W}_{q}, \Esp*{X^{2}} = 1 } }, \qquad F \in \mathbb{D}^{\infty}.
  \end{equation}
\end{definition}

Since we use this construction often let us also define.
\begin{definition}
  Let $(F_{n})$ be a sequence in $\mathbb{D}^{\infty}$ and $q \in \mathbb{N}^{*}$.
  We say that a sequence $(X_{n})$ \emph{realises} $\rho_{q}(F_{n})$ provided $X_{n} \in \mathcal{W}_{q}$ with $\Esp*{ X_{n}^{2} } = 1$ is such that
  \begin{equation*}
    \norm{ \Gamma[F_{n}, X_{n} ] }_{L^{2}(\prob)} \ge \rho_{q}(F_{n}) - \frac{1}{n}.
  \end{equation*}
\end{definition}
The following monotonicity property is immediate.

\begin{lemma}\label{th:directional-influence-monotone}
  For $1\le q<p$ we have $\rho_q\le \rho_p$.
\end{lemma}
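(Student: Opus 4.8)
The plan is to associate, with every admissible competitor $X$ in the supremum defining $\rho_q(F)$, an admissible competitor $Y$ in the supremum defining $\rho_p(F)$ for which $\norm{\Gamma(F,Y)}_{L^{2}(\prob)} = \norm{\Gamma(F,X)}_{L^{2}(\prob)}$; taking the supremum over $X$ then yields $\rho_q(F) \le \rho_p(F)$. The construction is the following. Assume first that $F$ is a polynomial and $X \in \mathcal{W}_q$ with $\Esp{X^{2}} = 1$, both involving only the coordinates $G_1, \dots, G_M$. Fix an index $j > M$ and set $Y \coloneq X\, H_{p-q}(G_j)$. Since $X$ is an $L^{2}$-limit of linear combinations of generators $\prod_{i \le M} H_{k_i}(G_i)$ with $\sum_i k_i = q$, multiplying by $H_{p-q}(G_j)$ produces combinations of generators of $\mathcal{W}_p$, so $Y \in \mathcal{W}_p$; and as $G_j$ is independent of $X$ while $\Esp{H_{p-q}(G_j)^{2}} = 1$, we get $\Esp{Y^{2}} = 1$. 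By \cref{eq:wiener:gamma-ell-2} and the product rule $\partial_i(UV) = U\,\partial_i V + V\,\partial_i U$,
\begin{equation*}
  \Gamma(F, Y) = \sum_{i} \partial_i F\, \partial_i\paren*{ X\, H_{p-q}(G_j) } = H_{p-q}(G_j)\, \Gamma(F, X) + X\, H_{p-q}'(G_j)\, \partial_j F,
\end{equation*}
and the last term vanishes since $F$ does not depend on $G_j$. Hence $\Gamma(F,Y) = H_{p-q}(G_j)\,\Gamma(F,X)$, where $\Gamma(F,X)$ involves only $G_1, \dots, G_M$ and is therefore independent of $G_j$, so that
\begin{equation*}
  \norm{\Gamma(F,Y)}_{L^{2}(\prob)}^{2} = \Esp{H_{p-q}(G_j)^{2}}\, \Esp{\Gamma(F,X)^{2}} = \norm{\Gamma(F,X)}_{L^{2}(\prob)}^{2},
\end{equation*}
which gives $\rho_p(F) \ge \norm{\Gamma(F,X)}_{L^{2}(\prob)}$.

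It remains to remove the finite-support hypotheses. The finite linear combinations of generators are dense in $\mathcal{W}_q$, and for a fixed polynomial $F$ the map $X \mapsto \Gamma(F,X) = \sum_i \partial_i F\,\partial_i X$ is $L^{2}$-continuous on $\mathcal{W}_q$: each $\partial_i$ is bounded from $\mathcal{W}_q$ into $\mathcal{W}_{q-1}$ because $\sum_i \norm{\partial_i X}_{L^{2}(\prob)}^{2} = \Esp{\Gamma(X,X)} = q\,\Esp{X^{2}}$ by \cref{eq:wiener:ipp}, the sum over $i$ is finite (only $i$ in the support of $F$ contribute), and by the hypercontractive norm equivalence \cref{eq:wiener:norm-equivalence} the convergence of $\partial_i X$ also holds in $L^{4}$ while $\partial_i F \in L^{4}$. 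Thus the supremum defining $\rho_q(F)$ is unchanged when restricted to finitely supported $X$, and the construction above applies. An analogous approximation reduces $F$ to a polynomial: on any fixed $\mathcal{W}_{\le d}$ one has $\rho_p(F) \le c_{d,p}\norm{F}_{L^{2}(\prob)}$ (by \cref{eq:wiener:norm-equivalence} together with $\Esp{\Gamma(F,F)} \le d\,\Esp{F^{2}}$ and pointwise Cauchy--Schwarz $\abs{\Gamma(F,Y)} \le \Gamma(F,F)^{1/2}\Gamma(Y,Y)^{1/2}$), so $\rho_p$ is Lipschitz there and one passes to the limit along polynomial approximations $F_k \to F$; the general $F \in \mathbb{D}^{\infty}$ then follows by truncating the chaos expansion. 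Finally, if $\rho_q(F) = +\infty$ the same construction produces competitors $Y$ with $\norm{\Gamma(F,Y)}$ arbitrarily large, so $\rho_p(F) = +\infty$ as well.

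The carré-du-champ computation in the first paragraph is immediate; the only point requiring a modicum of care is the passage from finitely many to countably many coordinates, i.e.\ the density/continuity argument of the second paragraph. It is, however, entirely routine, and moreover unnecessary in every application, where $F$ already lies in a fixed chaos $\mathcal{W}_{\le d}$.
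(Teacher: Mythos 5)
Your construction is essentially the paper's own proof: both multiply a near-optimal competitor $X \in \mathcal{W}_{q}$ by an independent unit-variance element of $\mathcal{W}_{p-q}$ built from fresh coordinates, observe that $\Gamma(F, XY) = Y\,\Gamma(F,X)$ has unchanged $L^{2}(\prob)$-norm while $XY \in \mathcal{W}_{p}$, and dispose of the infinitely-many-coordinates issue by approximation (your density/continuity argument for restricting the supremum to finitely supported $X$ is in fact a welcome elaboration of a step the paper only asserts). The one slip is the normalization $\Esp*{H_{p-q}(G_{j})^{2}} = (p-q)!$ rather than $1$, which is harmless: replace $H_{p-q}(G_{j})$ by $H_{p-q}(G_{j})/\sqrt{(p-q)!}$ and every subsequent identity, hence the conclusion $\rho_{q}(F) \leq \rho_{p}(F)$, is unchanged.
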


\begin{proof}
  Take $q<p$, $F \in \mathbb{D}^{\infty}$ and $(X_{n})$ realizing $\rho_{q}(F)$.
  We first assume that $F$ depends on finitely many Gaussian coordinates say $G_{1}, \dots, G_{m}$ for some $m \in \mathbb{N}^{*}$.
  In this case we can also take $X_{n}$ only depending on $G_{1}, \dots, G_{m}$.
  Can consider $(Y_n)_{n\ge 1}$ in $\mathcal{W}_{p-q}$, only depending on $G_{m+1}, G_{m+2}, \dots$ with $\Esp*{ Y_{n}^{2} } = 1$.
  In particular, $Y_{n}$ is independent of $(F,X_n)_{n\ge 1}$ and $\Gamma(F, Y_{n}) = \Gamma(X_{n}, Y_{n}) = 0$.
  Thus,
\begin{equation}\label{eq:monotone-limit}
  \Esp*{\Gamma[F, X_n Y_n]^2} = \Esp*{Y_n^2 \Gamma[F,X_n]^2} = \norm{\Gamma[F,X_n]}_{L^{2}(\prob)}^{2} \xrightarrow[n \to \infty]{} \rho_{q}(F).
\end{equation}
Using that the carré du champ vanished, we also find that
\begin{equation*}
  \mathsf{L}(X_{n}Y_{n}) = X_{n} \mathsf{L} Y_{n} + Y_{n} \mathsf{L} X_{n} = -p X_{n} Y_{n}.
\end{equation*}
Thus, $X_n Y_n \in\mathcal{W}_{p}$, hence $\norm{\Gamma[F,X_nY_n]}_2\le \rho_p(F)$ by definition.
Combining with \cref{eq:monotone-limit} and taking $n \to \infty$, we conclude.
For the general case, we approximate $F$ by a sequence $(F_{n})$ in $\mathbb{D}^{\infty}$ depending only on the $m_{n}$ first coordinates.
\end{proof}

\subsubsection{Revisiting asymptotic normality on Wiener chaoses}
The following result summarizes some known criterion for normal convergence, and establish the equivalence with our new criterion based on directional influence.
To the best of our knowledge, it is new.

\begin{theorem}\label{th:wiener:normal-convergence}
  Let $(F_{n})$ be a sequence in $\mathcal{W}_{p}$.
  The following are equivalent.
  \begin{enumerate}[(i)]
    \item\label{i:wiener:normal-convergence} $(F_{n})$ is asymptotically Gaussian.
    \item\label{i:wiener:normal-convergence:variance} $\var \Gamma[F_{n}, F_{n}]$ converges to $0$.
    \item\label{i:wiener:normal-convergence:eigenvalue} for $k \geq 2$, $H_{k}(F_{n})$ is asymptotically an eigenvalue for $\mathsf{L}$ of order $kp$
      \begin{equation*}
        (\mathsf{L} + kp) H_{k}(F_{n}) \xrightarrow[n \to \infty]{L^{2}(\prob)} 0.
      \end{equation*}
      \item\label{i:wiener:normal-convergence:projection} for $k \geq 2$, $H_{k}(F_{n})$ has asymptotically no components other than $kp$
      \begin{equation*}
        \mathcal{J}_{q} H_{k}(F_{n}) \xrightarrow[n \to \infty]{L^{2}(\prob)} 0, \qquad q \ne kp.
      \end{equation*}
    \item\label{i:wiener:normal-convergence:influence-max} $\rho_{p-1}(F_{n})$ vanishes as $n \to \infty$. 
    \item\label{i:wiener:normal-convergence:influence-min} $\rho_{\floor*{\frac{p}{2}}}(F_{n})$ vanishes as $n \to \infty$. 
  \end{enumerate}
\end{theorem}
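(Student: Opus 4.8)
The plan is to prove (i) $\Leftrightarrow$ (ii), the chain (ii) $\Rightarrow$ (v) $\Rightarrow$ (vi) $\Rightarrow$ (i), and (ii) $\Leftrightarrow$ (iii) $\Leftrightarrow$ (iv), which together give all the equivalences. I would first normalise $\Esp{F_{n}^{2}}=1$: if $\Esp{F_{n}^{2}}\to\sigma^{2}>0$ one rescales, and if $\Esp{F_{n}^{2}}\to0$ then $\Esp{\Gamma[F_{n},F_{n}]}=p\,\Esp{F_{n}^{2}}\to0$, so $\Gamma[F_{n},F_{n}]$ — a non-negative element of the fixed chaos $\mathcal{W}_{\leq 2p-2}$ — tends to $0$ in $L^{2}$ by \cref{eq:wiener:norm-equivalence} and every statement holds trivially; one may also take $p\geq2$, the case $p=1$ being trivial since $\mathcal{W}_{1}$ consists of standard Gaussians. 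Everything below is carried out in the carré du champ formalism, so diagonal terms need no special treatment.

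\textbf{The algebraic block, (i)--(iv).} The equivalence of (i) and (ii) is the Malliavin criterion of \citeauthor{NualartOrtizLatorre}~\cite{NualartOrtizLatorre} together with the fourth-moment theorem \cite{NualartPecattiCLT}; in particular (ii) $\Rightarrow$ (i) is quantitative, $d_{\mathrm{TV}}(F_{n},\mathcal{N}(0,1))\leq 2\,\norm*{1-\tfrac1p\Gamma[F_{n},F_{n}]}_{L^{2}}$. From $\Gamma[F,F]=\tfrac12\mathsf{L}(F^{2})+pF^{2}$ and $F^{2}=H_{2}(F)+1$ one gets $\Gamma[F_{n},F_{n}]-p=\tfrac12(\mathsf{L}+2p)H_{2}(F_{n})$, so $\var\Gamma[F_{n},F_{n}]=\tfrac14\norm*{(\mathsf{L}+2p)H_{2}(F_{n})}_{L^{2}}^{2}$ and (ii) is precisely (iii) for $k=2$. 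Since $(\mathsf{L}+kp)H_{k}(F_{n})=\sum_{q\neq kp}(kp-q)\mathsf{J}_{q}H_{k}(F_{n})$ is a finite sum with non-zero bounded coefficients, (iii) $\Leftrightarrow$ (iv) for each fixed $k$. Finally (iii) for $k-1$ and $k$ implies it for $k+1$: plug the Hermite recursion $H_{k+1}(x)=xH_{k}(x)-kH_{k-1}(x)$ and the chain rule $\Gamma[F_{n},H_{k}(F_{n})]=kH_{k-1}(F_{n})\Gamma[F_{n},F_{n}]$ into $\mathsf{L}(F_{n}H_{k}(F_{n}))=F_{n}\mathsf{L}H_{k}(F_{n})+H_{k}(F_{n})\mathsf{L}F_{n}+2\Gamma[F_{n},H_{k}(F_{n})]$, use (ii) to replace $\Gamma[F_{n},F_{n}]$ by $p$ up to an $L^{2}$-null term, and use \cref{eq:wiener:norm-equivalence} to absorb the products of $L^{2}$-null terms of fixed chaos degree against the $L^{p}$-bounded variables $F_{n},H_{j}(F_{n})$; the coefficient of $H_{k-1}(F_{n})$ cancels identically, leaving $(\mathsf{L}+(k+1)p)H_{k+1}(F_{n})\to0$ (base cases $k=1$, exact, and $k=2$, which is (ii)).

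\textbf{From variance to influences, (ii) $\Rightarrow$ (v) $\Rightarrow$ (vi).} Two applications of the chain rule \cref{eq:wiener:chain-rule} with $t\mapsto t^{2}/2$ and one integration by parts \cref{eq:wiener:ipp} give, for $X\in\mathcal{W}_{p-1}$,
\[
  \Esp{F_{n}X\,\Gamma[F_{n},X]}=-\tfrac14\Esp{F_{n}^{2}\,\mathsf{L}(X^{2})}=\tfrac14\sum_{m\geq1}m\,\psh{\mathsf{J}_{m}(F_{n}^{2})}{\mathsf{J}_{m}(X^{2})}.
\]
As $X^{2}\in\mathcal{W}_{\leq 2p-2}$, only indices $m\leq 2p-2$ occur, and for these $\norm{\mathsf{J}_{m}(F_{n}^{2})}\to0$ since (ii) reads $\var\Gamma[F_{n},F_{n}]=\tfrac14\sum_{m=1}^{2p-1}(2p-m)^{2}\norm{\mathsf{J}_{m}(F_{n}^{2})}^{2}\to0$; bounding $\sum_{m\leq 2p-2}\norm{\mathsf{J}_{m}(X^{2})}$ by a constant times $\norm{X^{2}}_{L^{1}}=1$ via \cref{eq:wiener:norm-equivalence}, uniformly over unit $X$, and using \cref{th:wiener:spectral-inequality} in the form $\Esp{\Gamma[F_{n},X]^{2}}\leq\tfrac{2p-1}{4}\Esp{F_{n}X\,\Gamma[F_{n},X]}$, one obtains $\rho_{p-1}(F_{n})^{2}\leq c_{p}\max_{1\leq m\leq 2p-2}\norm{\mathsf{J}_{m}(F_{n}^{2})}\to0$, which is (v). Then (v) $\Rightarrow$ (vi) is immediate from the monotonicity \cref{th:directional-influence-monotone}, since $\floor*{p/2}\leq p-1$.

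\textbf{The main obstacle, (vi) $\Rightarrow$ (i).} This is the heart of the matter and, I expect, the hard part: one must recover asymptotic normality from the vanishing of the \emph{weakest} influence $\rho_{\floor*{p/2}}$, and the threshold is sharp (already $\rho_{1}$ fails to suffice when $p\geq4$), so no monotonicity argument can help. Set $q=\floor*{p/2}$. Decomposing $\Gamma[F_{n},X]=\sum_{r}\tfrac{p+q-r}{2}\mathsf{J}_{r}(F_{n}X)$ shows that (vi) forces $\sup\set*{\norm{\mathsf{J}_{r}(F_{n}X)}:X\in\mathcal{W}_{q},\,\norm{X}=1}\to0$ for every $r<p+q$; choosing $X$ to be normalised iterated Malliavin derivatives of $F_{n}$ realises these as the operator norms of the ``contraction operators'' of $F_{n}$ at all levels $r\in\{p-q,\dots,p-1\}$. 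The decisive feature is positivity: each such operator is positive semi-definite with uniformly bounded trace (a Gram-type operator of trace $\asymp\Esp{F_{n}^{2}}=1$), so operator-norm smallness upgrades to Hilbert--Schmidt smallness via $\norm{M}_{\mathrm{HS}}^{2}\leq\norm{M}_{\mathrm{op}}\,\tr M$; a contraction-duality identity then transfers this to the complementary levels $\{1,\dots,q\}$, and because $q=\floor*{p/2}$ the two ranges together exhaust $\{1,\dots,p-1\}$ with no gap — for $q<\floor*{p/2}$ a gap appears, which is exactly why the smaller influences fail. This forces every $\norm{\mathsf{J}_{m}(F_{n}^{2})}\to0$, i.e.\ $\var\Gamma[F_{n},F_{n}]\to0$, hence (ii) and (i). The genuinely delicate points are the symmetrisation bookkeeping familiar from the fourth-moment literature and the control of the Leibniz cross-terms produced by the iterated derivatives; alternatively, (vi) $\Rightarrow$ (i) is the special case $s=\floor*{p/2}$ of \cref{th:super-lemme}, for which $\floor*{p/(s+1)}\leq1$ forces the limiting law to be Gaussian.
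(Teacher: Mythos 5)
Your handling of the easy implications is correct and essentially reproduces the paper's argument: (i) $\Leftrightarrow$ (ii) is quoted from \cite{NualartOrtizLatorre}; your identity $\Gamma[F_{n},F_{n}]-p=\tfrac12(\mathsf{L}+2p)H_{2}(F_{n})$ together with the Hermite-recursion induction on $k$ is an expanded form of the closed identity $(\mathsf{L}+kp)H_{k}(F_{n})=k(k-1)H_{k-2}(F_{n})(\Gamma[F_{n},F_{n}]-p)$ that the paper cites from \cite{AMMP}, and the error terms are indeed absorbed by \cref{eq:wiener:norm-equivalence}; your proof of (ii) $\Rightarrow$ (v) via $\Esp*{F_{n}X\,\Gamma[F_{n},X]}=\tfrac14\sum_{m}m\,\psh{\mathsf{J}_{m}(F_{n}^{2})}{\mathsf{J}_{m}(X^{2})}$ followed by \cref{th:wiener:spectral-inequality} is the paper's computation written with $F_{n}^{2}$ in place of $H_{2}(F_{n})$; and (v) $\Rightarrow$ (vi) is \cref{th:directional-influence-monotone}, as in the paper.

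The genuine gap is the implication (vi) $\Rightarrow$ (i), which is the new content of the theorem and which you only sketch. Concretely, your contraction outline controls the wrong quantity: writing $F_{n}=I_{p}(f_{n})$, the hypothesis $\rho_{\floor*{p/2}}(F_{n})\to0$ yields, through the product formula behind $\Gamma[F_{n},X]=\tfrac12\sum_{r}(p+q-r)\mathsf{J}_{r}(F_{n}X)$, a uniform bound on the \emph{symmetrized} contractions $\widetilde{f_{n}\otimes_{s}g}$ over symmetric unit $g$, whereas the positive-operator upgrade $\norm{M}_{\mathrm{HS}}^{2}\leq\norm{M}_{\mathrm{op}}\tr M$ and the duality $\norm{f_{n}\otimes_{s}f_{n}}=\norm{f_{n}\otimes_{p-s}f_{n}}$ require smallness of the operator norm of the \emph{unsymmetrized} map $h\mapsto f_{n}\otimes_{s}h$. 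Symmetrization only decreases norms, so the available control points in the wrong direction; naming this "symmetrisation bookkeeping" does not discharge it, and no device is offered to bridge it. The fallback — deducing (vi) $\Rightarrow$ (i) from \cref{th:super-lemme} with $s=\floor*{p/2}$ — is circular in this paper: the proof of \cref{th:super-lemme} runs through the decomposition machinery of \cref{s:proof-wiener} (\cref{th:wiener:decomposition:strongest-influence-gaussian}, \cref{th:decomposition:iteration}, \cref{th:wiener:decomposition:remainder-gaussian}), all of which invoke \cref{th:wiener:normal-convergence}, and in particular this very implication. For comparison, the paper proves (vi) $\Rightarrow$ (ii) without contractions: it tests $\Gamma[F_{n},F_{n}]$ against bounded $Z_{n}\in\mathcal{W}_{k}$, $1\leq k\leq 2p-2$, iterates \cref{th:algebraic-lemma} to rewrite $\Esp*{F_{n}^{2}Z_{n}}$ as sums of $\Esp*{F_{n}\,\partial_{i_{1},\dots,i_{r_{0}}}F_{n}\,\partial_{i_{1},\dots,i_{r_{0}}}Z_{n}}$ until $(p-r_{0})+(k-r_{0})=p$, notes that then $p-r_{0}\leq p/2$ or $k-r_{0}\leq p/2$, and concludes with one more application of \cref{th:algebraic-lemma} plus Cauchy--Schwarz over multi-indices, bounding everything by a constant times $\rho_{\floor*{p/2}}(F_{n})$; working with partial derivatives indexed by multi-indices avoids the symmetrization obstacle altogether. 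Your operator-theoretic route may be completable, but as written the decisive step is missing.
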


\begin{proof}
  \cref{i:wiener:normal-convergence} $\Leftrightarrow$ \cref{i:wiener:normal-convergence:variance} follows from \cite{NualartOrtizLatorre}.

  \cref{i:wiener:normal-convergence:variance} $\Leftrightarrow$ \cref{i:wiener:normal-convergence:eigenvalue} is immediate once we observe that $(\mathsf{L} + kp) H_{k}(F_{n}) = k(k-1) H_{k-2}(F_{n}) (\Gamma(F_{n}, F_{n}) - p)$, see \cite[Eq.~(4.6)]{AMMP}.

  \cref{i:wiener:normal-convergence:eigenvalue} $\Leftrightarrow$ \cref{i:wiener:normal-convergence:projection} since $(\mathsf{L} + 2p)$ corresponds exactly to projecting on $\mathcal{W}_{2p}^{\perp}$, up to a multiplicative constant.

  We can assume that $p>1$, otherwise, on the one hand $(F_{n})$ is already Gaussian, and on the other hand $\mathcal{W}_{p-1} = \mathcal{W}_0 = \mathbb{R}$.
  Thus, the chain rule \cref{eq:wiener:chain-rule} implies that $\Gamma[F_n,X]=0$ for every $X\in\mathcal{W}_0$, hence $\rho_{\floor{\frac{p}{2}}}(F_{n}) = \rho_{p-1}(F_n)=0$.
      Thus, we assume that $p\ge 2$.

  \cref{i:wiener:normal-convergence:projection,i:wiener:normal-convergence:eigenvalue} $\Rightarrow$ \cref{i:wiener:normal-convergence:influence-max}.
      Consider $(F_n)_{n\ge 1}\in\mathcal{W}_p$ such that $F_n\to\mathcal{N}(0,1)$ and let us prove that $\rho_{p-1}(F_n)\to 0$.
      Take $(X_{n})$ realising $\rho_{p-1}(F_{n})$.
      Since $X_n\in\mathcal{W}_{p-1}$, by \cref{eq:wiener:multiplication}, we find $X_n^2 \in \mathcal{W}_{\leq (2p-2)}$.
      Given that $\mathcal{W}_{2p} \perp \mathcal{W}_{\leq (2p-2)}$, we derive, by \cref{i:wiener:normal-convergence:projection}, that $\Esp*{H_{2}(F_{n}) X_n^2} \to 0$.
      On the other hand by \cref{i:wiener:normal-convergence:eigenvalue} $\Esp*{ (\mathsf{L} + 2p)H_{2}(F_{n}) X_{n}^{2}} \to 0$.
      Using the chain rule \cref{eq:wiener:chain-rule}, the integration by parts \cref{eq:wiener:ipp}, we thus find
      \begin{equation*}
        \begin{split}
          \Esp*{F_n X_n \Gamma[F_n,X_n]}
          &= \frac{1}{4}\Esp*{\Gamma[X_n^2,H_{2}(F_{n})]}
        \\&= -\frac{1}{4}\Esp*{ X_n^2 \mathsf{L} H_{2}(F_{n}) }
        \\&= -\frac{1}{4}\Esp*{ X_n^2 (\mathsf{L} +2p) H_{2}(F_{n}) } + \frac{p}{2} \Esp*{ X_{n}^{2} H_{2}(F_{n}) }
          \xrightarrow[n \to \infty]{} 0.
        \end{split}
      \end{equation*}
      By \cref{th:wiener:spectral-inequality}, we conclude that
      \begin{equation*}
        \Gamma(F_{n}, X_{n}) \xrightarrow[n \to \infty]{L^{2}(\prob)} 0,
      \end{equation*}
      and thus $\rho_{p-1}(F_{n}) \to 0$.

      \cref{i:wiener:normal-convergence:influence-max} $\Rightarrow$ \cref{i:wiener:normal-convergence:influence-min}. Direct consequence of the monotonicity \cref{th:directional-influence-monotone}.

      \cref{i:wiener:normal-convergence:influence-min} $\Rightarrow$ \cref{i:wiener:normal-convergence:variance}.
      We want to establish that $\Var*{ \Gamma(F_{n}, F_{n}) } \to 0$ as $n \to \infty$.
      By \cref{eq:wiener:multiplication-gamma}, it is sufficient to show that for all $(Z_n)_{n\ge 1}$ a bounded sequence in $\mathcal{W}_k$ with $k\in \set*{ 1, \dots, 2p-2}$. $\Esp*{\Gamma[F_n,F_n]Z_n}\to 0$.
      Indeed, this yields that orthogonal projections of $\Gamma[F_n,F_n]$ on chaoses of order $k$ tend to zero, hence the desired claim.
      For every $l \leq p \wedge k$, and every multi-index $(i_1,\cdots,i_l)\in\mathbb{N}^l$ we have $\partial_{i_1,\cdots,i_l} F_n\in\mathcal{W}_{p-l}$ and $\partial_{i_1,\cdots,i_l} Z_n\in\mathcal{W}_{k-l}$.
      Provided that $(p-l)+(k-l)\neq p$, that is $k\neq 2l$, recalling \cref{eq:wiener:gamma-ell-2}, \cref{th:algebraic-lemma} entails that
      \begin{equation*}
        \begin{split}
          \Esp*{F_n \partial_{i_1,\cdots,i_l} F_n \partial_{i_1,\cdots,i_l} Z_n}
&=\frac{2}{k-2l}\Esp*{F_n \Gamma[\partial_{i_1,\cdots,i_l} F_n ,\partial_{i_1,\cdots,i_l} Z_n]}
\\&=\frac{2}{k-2l}\sum_{i_{l+1}=1}^\infty\Esp*{F_n \partial_{i_1,\cdots,i_{l+1}} X_n\partial_{i_1,\cdots,i_{l+1}} Z_n}.
        \end{split}
      \end{equation*}
      Applying $r$-times consecutively the previous procedure leads to
      \begin{equation}\label{eq:directional:algebraic-iteration}
        \begin{split}
          \Esp*{F_n F_n Z_n} &= \frac{2}{k}\Esp*{F_n\Gamma[F_n,Z_n]}
                           \\&= \frac{2}{k}\sum_{i=1}^\infty \Esp*{F_n \partial_i F_n \partial_i Z_n}
                           \\&\vdots
                           \\&= \frac{2^r}{\prod_{l=0}^{r-1}(k-2l)}\sum_{i_1,\cdots,i_r=1}^\infty\Esp*{F_n \partial_{i_1,\cdots,i_r} F_n\partial_{i_1,\cdots,i_r} Z_n}.
        \end{split}
      \end{equation}
      We can apply this procedure as long as $k\neq 2l$ for every $l\in \{ 0, \dots, r-1 \}$.
      Let $r_0$ be maximal such that the above equality holds, in particular $p=(p-r_0)+(k-r_0)$, otherwise one could further decompose contradicting the maximality of $r_0$.
      Then, either $p-r_0\le \frac{p}{2}$ or $k-r_0\le \frac{p}{2}$.
      Otherwise $p-r_0>\frac{p}{2}$ and $k-r_0>\frac{p}{2}$ which contradicts $p=(p-r_0)+(k-r_0)$.

      \begin{enumerate}[(a),wide]
        \item \textbf{Case $p-r_0\le \frac{p}{2}$}.
      In this case, $p-r_0\le \floor*{ \frac{p}{2}}$, since we are working with integers.
      By \cref{th:algebraic-lemma}, for a given multi-index $(i_1,\cdots,i_{r_0})$ and given that $p+(p-r_0)=2p-r_0>k-r_0$ ($k\in \{ 1, \dots, 2p-2 \}$) we obtain
      \begin{equation}\label{eq:directional:algebraic-reverse}
        \Esp*{F_n \partial_{i_1,\cdots,i_{r_0}} F_n\partial_{i_1,\cdots,i_{r_0}} Z_n} = \frac{2}{2p-k}\Esp*{\Gamma[F_n,\partial_{i_1,\cdots,i_{r_0}} F_n]\partial_{i_1,\cdots,i_{r_0}} Z_n}.
      \end{equation}
      Let us write $m \coloneq \floor*{\frac{p}{2}}$.
      Besides, for any $X\in\mathcal{W}_{p-r_0}$ we have $\|\Gamma[F_n,X]\|_2\le \|X\|_2 \rho_{p-r_0}(F_n)\le \|X\|_2 \rho_{m}(F_n)$ since $p-r_0 \le m$ and since we have \cref{th:directional-influence-monotone}.
      Hence, using Cauchy-Schwarz with $\Gamma[F_n,\partial_{i_1,\cdots,i_{r_0}} F_n]$ and $\partial_{i_1,\cdots,i_{r_0}} Z_n$ we have,
      \begin{equation*}
        \norm*{F_n \partial_{i_1,\cdots,i_{r_0}} F_n\partial_{i_1,\cdots,i_{r_0}} Z_n}_{L^{1}(\prob)} \le\frac{2\rho_{m}(F_n)}{2p-k} \norm*{\partial_{i_1,\cdots,i_{r_0}} F_n}_{L^{2}(\prob)} \norm*{\partial_{i_1,\cdots,i_{r_0}} Z_n }_{L^{2}(\prob)}.
      \end{equation*}
      Gathering these facts, we may write
      \begin{equation}
        \begin{split}
          \abs*{ \Esp*{\Gamma[F_n,F_n]Z_n} } & = \tfrac{2p-k}{2} \abs*{ \Esp*{F_n^2 Z_n} } \qquad (\text{\cref{th:algebraic-lemma}}).
                                           \\&\leq \tfrac{2p-k}{2}\times \tfrac{2^{r_0}}{\prod_{l=0}^{r_0-1} \Esp*{k-2l}}  \sum_{i_1,\cdots,i_{r_0}=1}^\infty \abs*{\Esp*{F_n \partial_{i_1,\cdots,i_{r_0}} F_n\partial_{i_1,\cdots,i_{r_0}} Z_n} } \qquad \cref{eq:directional:algebraic-iteration}
                                           \\&\le \tfrac{2^{r_0}}{\prod_{l=0}^{r_0-1}(k-2l)} \sum_{i_1,\cdots,i_{r_0}=1}^\infty \norm*{ \Gamma[F_n,\partial_{i_1,\cdots,i_{r_0}} F_n]\partial_{i_1,\cdots,i_{r_0}} Z_n}_{L^{1}(\prob)} \qquad \cref{eq:directional:algebraic-reverse}
        \end{split}
      \end{equation}
      Applying then the Cauchy--Schwarz inequality twice: first on the $L^{1}$-norm then on the sum, we obtain
      \begin{equation}\label{eq:influence:cs}
        \abs*{ \Esp*{ \Gamma(F_{n}, F_{n}) Z_{n} } } 
        \le \tfrac{2^{r_0} }{\prod_{l=0}^{r_0-1}(k-2l)} \rho_m(F_n) \norm{\mathsf{D}^{r_{0}} F_{n}}_{\ell^{2}(\mathbb{N}^{r_{0}}) \otimes L^{2}(\prob)} \norm{\mathsf{D}^{r_{0}} Z_{n}}_{\ell^{2}(\mathbb{N}^{r_{0}}) \otimes L^{2}(\prob)}
      \end{equation}
      where we use the random sequences
      \begin{equation*}
        \mathsf{D}^{r_{0}} X \coloneq \set*{ \partial_{i_{1},\dots,i_{r_{0}}} X : (i_{1}, \dots, i_{r_{0}}) \in \mathbb{N}^{r_{0}} }, \quad X \in \mathbb{D}^{\infty}.
      \end{equation*}
      When $F$ is a Wiener chaos of order $\lambda$, then:
      \begin{enumerate}
        \item by integration by parts \cref{eq:wiener:ipp}, $\sum_{i=1}^\infty\|\partial_i F\|_2^2 = \Esp*{\Gamma[F,F]} = -\lambda \Esp*{F^2}$;
        \item for any multi-index $(i_1,\cdots,i_{r})$, $\partial_{i_1,\cdots,i_{r}} F_n$ is a Wiener chaos of order $\lambda-r$.
      \end{enumerate}
      Combining these two facts two facts together gives
      \begin{align*}
        & \norm{\mathsf{D}^{r_{0}} F_{n}}^{2}_{\ell^{2}(\mathbb{N}^{r_{0}}) \otimes L^{2}(\prob)} = \sum_{i_1,\cdots,i_{r_0}=1}^\infty \Esp*{\paren*{\partial_{i_1,\cdots,i_{r_0}} F_n}^{2}} = p(p-1)\cdots(p-r+1) \Esp*{F_n^2}=\frac{p!}{(p-r_0)!},
        \\& \norm{\mathsf{D}^{r_{0}} Z_{n}}^{2}_{\ell^{2}(\mathbb{N}^{r_{0}}) \otimes L^{2}(\prob)} = \sum_{i_1,\cdots,i_{r_0}=1}^\infty \Esp*{\paren*{\partial_{i_1,\cdots,i_{r_0}} Z_n}^{2}} = \frac{p!}{(p-r_0)!} \Esp*{Z_n^2}.
      \end{align*}
      Substituting in \cref{eq:influence:cs}, we get, since $(Z_{n})$ is bounded,
    \begin{equation*}
    \abs*{ \Esp*{\Gamma[F_n,F_n]Z_n} } \le \frac{2^{r_0} \rho_m(F_n)}{\prod_{l=0}^{r_0-1}(k-2l)}\frac{p!}{(p-r_0)!}\|Z_n\|_2\to 0.
  \end{equation*}
\item \textbf{Case $k-r_0\le \frac{p}{2}$}.
      Since $k\ge 1$, $p+(k-r_0)>p-r_0$ and one may write relying on \cref{th:algebraic-lemma} that
      \begin{equation*}
        \Esp*{F_n \partial_{i_1,\cdots,i_{r_0}} F_n\partial_{i_1,\cdots,i_{r_0}} Z_n} = \frac{2}{k}\Esp*{\Gamma[F_n,\partial_{i_1,\cdots,i_{r_0}} Z_n]\partial_{i_1,\cdots,i_{r_0}} F_n}.
      \end{equation*}
      The rest of the proof is identical up to a change in the constant $\frac{2}{2p-k}$ that is now replaced by $\frac{2}{k}$.
      The final bound is then given by
    \begin{equation*}
    \abs*{ \Esp*{\Gamma[F_n,F_n]Z_n} } \le \frac{2^{r_0}(2p-k) \rho_m(F_n)}{k\prod_{l=0}^{r_0-1}(k-2l)}\frac{p!}{(p-r_0)!}\|Z_n\|_2\to 0.
  \end{equation*}
\end{enumerate}
\end{proof}

\begin{remark}\label{rem-clt-3d}
  When $p=3$, then $\floor*{ \frac{p}{2} } =1$ and, owing to the fact that every element of $\mathcal{W}_{1}$ is of the form $\vec{a} \cdot \vec{G}$ for some $\vec{a} \in \ell^{2}(\mathbb{N})$, we get the following equivalence for Wiener chaoses of degree $3$
  \begin{equation*}
    F_n \xrightarrow[n \to \infty]{\law} \mathcal{N}(0,1) \Leftrightarrow \sup_{\norm{\vec{a}}_{\ell^{2}(\mathbb{N})} \leq 1 } \norm*{ \sum a_{i} \partial_{i} F_{n} }_{L^{2}(\prob)} 
  \end{equation*}
  For chaoses of higher degree, $\rho_1$ is not enough to measure asymptotic normality as illustrated by the following counterexample.
    Set $F_n \coloneq (n^{-1/2} \sum_{k=1}^n H_2(G_{2k})) \times (n^{-1/2} \sum_{k=1}^n H_2(G_{2k+1}))$, which, by the usual central limit theorem converges to the product of two independent Gaussian.
    For $X \coloneq \vec{a} \cdot \vec{G}$ with $\norm{\vec{a}}_{\ell^{2}(\mathbb{N})} = 1$, we find
    \begin{equation*}
      \Gamma[F_n,X] = \paren*{\frac{2}{\sqrt{n}}\sum_{k=1}^n a_{2k} G_{2k}} \times \paren*{\frac{1}{\sqrt{n}} \sum_{k=1}^n H_2(G_{2k+1})}
      + \paren*{\frac{1}{\sqrt{n}}\sum_{k=1}^n H_2(G_{2k})} \times \paren*{\frac{2}{\sqrt{n}}\sum_{k=1}^n a_{2k+1} G_{2k+1}}.
    \end{equation*}
    Moreover, 
    \begin{equation*}
      \norm*{ \frac{1}{\sqrt{n}}\sum_{k=1}^n a_{2k} G_{2k} }_{L^{2}(\prob)}^2= \frac{1}{n} \sum_{k=1}^n a_{2k}^{2} \le \frac{1}{n},
    \end{equation*}
    with a similar bound for the odd terms.
    This implies, computing the norm with the previous equality and using independence, that $\|\Gamma[F_{n},X]\|_2\le \frac{2\sqrt{2}}{\sqrt{n}}$.
    Thus $\rho_1(F_n)\to 0$ but the limit is non Gaussian.
\end{remark}

\subsubsection{Asymptotic independence and normal convergence}
We show how our new criterion provides new insights on asymptotic independence for Wiener chaoses that are asymptotically Gaussian.
We start by establishing that the carré du champ of an asymptotically normal Wiener chaos is asymptotically an eigenfunction.

\begin{lemma}\label{th:wiener:gamma-asymptotic-chaos}
  Let $(F_n,X_n)_{n\ge 1}$ be a sequence in $\mathcal{W}_p\times \mathcal{W}_q$ with , $\Esp*{F_n^2}=1$ and $X_n\to \mathcal{N}(0,1)$.
  Then,
  \begin{equation}\label{eq:proj-gamma}
    \begin{cases}
    & \text{if}\ q\le p, \qquad (\mathsf{L}+p-q)\Gamma[F_n,X_n] \xrightarrow[n \to \infty]{L^{2}(\prob)} 0.
  \\& \text{if}\ q > p, \qquad \Gamma(F_{n}, X_{n}) \xrightarrow[n \to \infty]{L^{2}(\prob)} 0.
  \end{cases}
\end{equation}
\end{lemma}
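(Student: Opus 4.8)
The plan is to compute the chaos expansion of $\Gamma[F_n,X_n]$ and to show that all of its components except the distinguished one tend to $0$ in $L^{2}(\prob)$. Since $F_n\in\mathcal{W}_p$ and $X_n\in\mathcal{W}_q$, \cref{eq:wiener:gamma-chaos} gives $\Gamma[F_n,X_n]=\tfrac12(\mathsf{L}+p+q)(F_nX_n)$, hence $\mathsf{J}_m\Gamma[F_n,X_n]=\tfrac{p+q-m}{2}\,\mathsf{J}_m(F_nX_n)$ for every $m\in\mathbb{N}$. The component on $\mathcal{W}_{p+q}$ is thus automatically killed, and it suffices to prove $\mathsf{J}_m(F_nX_n)\to0$ in $L^{2}(\prob)$ for every $m$ in the support of $F_nX_n$ other than $p+q$ and, when $q\le p$, other than $p-q$. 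Indeed, this yields $(\mathsf{L}+p-q)\Gamma[F_n,X_n]\to0$ in the regime $q\le p$ (the coefficients $(p-q)-m$ being bounded, and vanishing exactly on $\mathcal{W}_{p-q}$), and $\Gamma[F_n,X_n]\to0$ in the regime $q>p$, where one must in addition show that the lowest possible component, lying on $\mathcal{W}_{q-p}$, disappears.

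To estimate $\mathsf{J}_m(F_nX_n)$ I would pass to Wiener--Itô kernels (see \cite{NourdinPeccatiBlueBook}): writing $F_n=I_p(f_n)$, $X_n=I_q(g_n)$, the multiplication formula for multiple integrals gives $\mathsf{J}_{p+q-2r}(F_nX_n)=r!\binom{p}{r}\binom{q}{r}\,I_{p+q-2r}(f_n\widetilde\otimes_r g_n)$, so that $\norm{\mathsf{J}_{p+q-2r}(F_nX_n)}^{2}_{L^{2}(\prob)}\le C_{p,q,r}\,\norm{f_n\otimes_r g_n}^{2}$. The classical identity $\norm{f_n\otimes_r g_n}^{2}=\psh{f_n\otimes_{p-r}f_n}{g_n\otimes_{q-r}g_n}$, Cauchy--Schwarz, the elementary contraction bound $\norm{f_n\otimes_{p-r}f_n}\le\norm{f_n}^{2}$, and the Wiener--Itô isometry $\norm{f_n}^{2}=\Esp{F_n^{2}}/p!=1/p!$ together yield $\norm{f_n\otimes_r g_n}^{2}\le\tfrac1{p!}\norm{g_n\otimes_{q-r}g_n}$. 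Now $X_n$ lives in the fixed chaos $\mathcal{W}_q$, converges in law to $\mathcal{N}(0,1)$, and has $\Esp{X_n^{2}}\to1$ by \cref{th:wiener:convergence-in-law-moments}, so the fourth moment theorem \cite{NualartPecattiCLT} gives $\norm{g_n\otimes_s g_n}\to0$ for every $s\in\set{1,\dots,q-1}$ (while $\norm{g_n}^{2}=\norm{g_n\otimes_0 g_n}\to1/q!\ne0$). The bookkeeping is then: if $m$ differs from $p+q$ and, when $q\le p$, from $p-q$, the contraction order $r=(p+q-m)/2$ satisfies $1\le r\le\min(p,q)-1$ when $q\le p$ and $1\le r\le p$ when $q>p$; in either case $q-r\in\set{1,\dots,q-1}$, whence $\norm{g_n\otimes_{q-r}g_n}\to0$ and that component vanishes. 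The only escaping order is $r=q$, occurring precisely when $q\le p$ and producing the surviving $\mathcal{W}_{p-q}$-component. I expect the genuinely delicate point to be exactly this last bookkeeping, in particular the observation that when $q>p$ the lowest chaos in $F_nX_n$, namely $\mathcal{W}_{q-p}$, comes from the order $r=p\le q-1$, which still carries a genuine self-contraction $g_n\otimes_{q-p}g_n$ of the kernel of $X_n$, so that it too vanishes.

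It is worth noting that the case $q>p$ also admits a self-contained proof avoiding kernels. The Leibniz rule $\Gamma[F_nX_n,X_n]=F_n\Gamma[X_n,X_n]+X_n\Gamma[F_n,X_n]$, the integration by parts \cref{eq:wiener:ipp}, and $\mathsf{L}X_n=-qX_n$ give $2\,\Esp{F_nX_n\,\Gamma[F_n,X_n]}=q\,\Esp{F_n^{2}X_n^{2}}-\Esp{F_n^{2}\,\Gamma[X_n,X_n]}$. Since $(F_n)$ is bounded in every $L^{p}(\prob)$ by hypercontractivity, since $X_n\to\mathcal{N}(0,1)$ forces $\Gamma[X_n,X_n]\to q$ in $L^{2}(\prob)$ (because $\var\Gamma[X_n,X_n]\to0$ by \cref{th:wiener:normal-convergence} and $\Esp{\Gamma[X_n,X_n]}=q\Esp{X_n^{2}}\to q$), and since the same theorem makes $H_2(X_n)$ asymptotically a chaos of order $2q$ so that $X_n^{2}$ concentrates asymptotically in $\mathcal{W}_{0}\oplus\mathcal{W}_{2q}$ while $F_n^{2}\in\mathcal{W}_{\le 2p}$ with $2p<2q$, one gets $\Esp{F_n^{2}X_n^{2}}\to1$ and $\Esp{F_n^{2}\Gamma[X_n,X_n]}\to q$, hence $\Esp{F_nX_n\,\Gamma[F_n,X_n]}\to0$; the spectral inequality \cref{th:wiener:spectral-inequality} then yields $\Esp{\Gamma[F_n,X_n]^{2}}\to0$.
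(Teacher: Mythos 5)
Your proof is correct, but it follows a genuinely different route from the paper. You pass to Wiener--Itô kernels and use the multiplication formula, the contraction identity $\norm{f_n\otimes_r g_n}^{2}=\psh{f_n\otimes_{p-r}f_n}{g_n\otimes_{q-r}g_n}$, and the Nualart--Peccati fourth moment theorem to kill every chaotic component of $F_nX_n$ except the orders $p+q$ and (when $q\le p$) $p-q$; your bookkeeping of the surviving contraction orders is accurate, including the point that for $q>p$ the lowest component comes from $r=p\le q-1$ and hence still carries a vanishing self-contraction $g_n\otimes_{q-p}g_n$ (you correctly note that $\Esp{X_n^2}\to1$, needed for the fourth moment theorem, follows from \cref{th:wiener:convergence-in-law-moments}). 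The paper instead stays entirely inside its kernel-free carré du champ framework: the case $q>p$ is immediate from $\norm{\Gamma[F_n,X_n]}_{2}\le\rho_{p}(X_n)\le\rho_{q-1}(X_n)\to0$ via \cref{th:wiener:normal-convergence,th:directional-influence-monotone}, while for $q\le p$ it tests $\Gamma[F_n,X_n]$ against bounded sequences in $\mathcal{W}_k$, $k\ne p-q$, iterates \cref{th:algebraic-lemma} (integration by parts) until a derivative of order $r_0$ falls into a chaos of degree at most $\floor{q/2}$, and then concludes with Cauchy--Schwarz and $\rho_{\floor{q/2}}(X_n)\to0$. Your route is shorter and rests on classical, well-documented results, but it imports the product formula and contraction calculus that the paper deliberately avoids; the paper's argument is longer but self-contained in the influence/$\Gamma$ machinery that it reuses throughout the rest of the proof. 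Your alternative kernel-free argument for $q>p$ (the identity $2\Esp{F_nX_n\Gamma[F_n,X_n]}=q\Esp{F_n^2X_n^2}-\Esp{F_n^2\Gamma[X_n,X_n]}$ combined with \cref{th:wiener:spectral-inequality}) is also sound and closer in spirit to the paper's toolbox, though the paper does not use it.
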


\begin{proof}
  The case $p < q$ follows immediately from \cref{th:wiener:normal-convergence}.
  Thus, take $p \geq q$.
  By \cref{eq:wiener:multiplication-gamma}, $\Gamma[F_n,X_n]\in \mathcal{W}_{\leq p+q-2}$.
  Since $\mathsf{L} + (p-q) \mathsf{I})$ corresponds to the orthogonal projection on $\mathcal{W}_{p-q}^{\perp}$, it is sufficient to consider, by \cref{eq:wiener:multiplication-gamma}, $k \in \{ 0, \dots, p+q-2 \}$ such that $k\neq p-q$ and $(Z_n)_{n\ge 1}$ a bounded sequence in $\mathcal{W}_k$, and to establish that $\Esp*{\Gamma[F_n,X_n]Z_n}\to 0$.
  If $q<p$ then $q-p<0$ and $k>q-p$, while if $p=q$ then by assumption $k\neq p-q=0$ and $k>q-p$ as well.
  Hence, applying \cref{th:algebraic-lemma} twice gives
  \begin{equation*}
    \begin{split}
      \Esp*{\Gamma[F_n,X_n]Z_n} &=\frac{p+q-k}{2}\Esp*{F_n X_n Z_n}
                              \\&=\frac{p+q-k}{2}\times\frac{2}{p+k-q}\Esp*{\Gamma[F_n,Z_n]X_n}
                              \\&=\frac{p+q-k}{p+k-q}\sum_{i=1}^\infty [\partial_i F_n \partial_i Z_n X_n].
    \end{split}
  \end{equation*}
  Iterating the previous procedure $r_0$ times with $r_0$ being the smallest integer such that we do not divide by $0$ yields
  \begin{equation*}
    \Esp*{\Gamma[F_n,X_n]Z_n}
=\frac{p+q-k}{2} \times \prod_{l=0}^{r_0-1}\frac{2}{p+k-q-2l}\sum_{i_1,\cdots,i_{r_0}=1}^\infty \Esp*{\partial_{i_1,\cdots,i_{r_0}} F_n \partial_{i_1,\cdots,i_{r_0}} Z_n X_n}.
  \end{equation*}
  Since $r_0$ is maximal, the previous expression cannot be further decomposed and one must have $p-r_0 + k-r_0 = q$.
  Hence, either $p-r_0\le \frac{q}{2}$ either $k-r_0\le\frac{q}{2}$ and, as in the previous proof, we consider both cases.
  \begin{enumerate}[(a),wide,nosep]
    \item \textbf{Case $k-r_0\le \frac{q}{2}$}.
      For a given multi-index $(i_1,\cdots,i_{r_0})$ we have $\partial_{i_1,\cdots,i_{r_0}} F_n\in\mathcal{W}_{p-r_0}$ and $\partial_{i_1,\cdots,i_{r_0}} Z_n\in\mathcal{W}_{k-r_0}$. Besides $(k-r_0)+q\neq p-r_0$ since $k\neq p-q$ hence
      \begin{equation*}
        \begin{split}
          \abs*{ \Esp*{\partial_{i_1,\cdots,i_{r_0}} F_n\partial_{i_1,\cdots,i_{r_0}} Z_n X_n} } &= \frac{2}{q+k-p} \abs*{ \Esp*{\partial_{i_1,\cdots,i_{r_0}} F_n\Gamma[X_n,\partial_{i_1,\cdots,i_{r_0}} Z_n]} }
                                                                                                 \\&\le \frac{2}{q+k-p} \norm*{ \partial_{i_1,\cdots,i_{r_0}} F_n }_{L^{2}(\prob)} \norm*{ \partial_{i_1,\cdots,i_{r_0}} Z_n }_{L^{2}(\prob)} \, \rho_{\floor*{\frac{q}{2}}}(X_n).
        \end{split}
      \end{equation*}
      Using Cauchy-Schwarz in the same way as to derive \cref{eq:influence:cs}, we find
      \begin{equation*}
        \begin{split}
          \norm{\Esp*{\Gamma[F_n,X_n]Z_n}}
&\le \tfrac{p+q-k}{2} \prod_{l=0}^{r_0-1}\tfrac{2}{p+k-q-2l} \tfrac{2}{q+k-p} \times \rho_{\floor*{ \frac q 2 } }(X_n) \psh*{ \norm*{\mathsf{D}^{r_{0}}F_{n}}_{L^{2}(\prob)}}{ \norm*{\mathsf{D}^{r_{0}} Z_{n}}_{L^{2}(\prob)} }_{\ell^{2}(\mathbb{N}^{r_{0}})}
\\&\le \rho_{\floor*{ \frac q 2}}(X_n)\tfrac{p+q-k}{q+k-p} \prod_{l=0}^{r_0-1}\tfrac{2}{p+k-q-2l}\times \norm{\mathsf{D}^{r_{0}}F_{n}}_{\ell^{2}(\mathbb{N}^{r_{0}}) \otimes L^{2}(\prob)} \norm{\mathsf{D}^{r_{0}}Z_{n}}_{\ell^{2}(\mathbb{N}^{r_{0}}) \otimes L^{2}(\prob)}
\\&=\rho_{\lfloor \frac q 2 \rfloor}(X_n)\tfrac{p+q-k}{q+k-p} \prod_{l=0}^{r_0-1}\tfrac{2}{p+k-q-2l} \sqrt{\tfrac{p!}{(p-r_0)!}} \sqrt{\tfrac{k!}{(k-r_0)!}} \times \|F_n\|_{L^{2}(\prob)} \|Z_n\|_{L^{2}(\prob)}.
        \end{split}
      \end{equation*}
      By assumption $X_n\to\mathcal{N}(0,1)$, thus using \cref{th:wiener:normal-convergence}, we get $\rho_{\lfloor\frac{q}{2}\rfloor}(X_n)\to 0$.
      Thus, the last line vanishes as $n \to \infty$, since both $(F_{n})$ and $(Z_{n})$ are bounded in $L^{2}(\prob)$.

    \item \textbf{Case $p-r_0\le \frac{q}{2}$}.
      For a given multi-index $(i_1,\cdots,i_{r_0})$ we have $\partial_{i_1,\cdots,i_{r_0}} F_n\in\mathcal{W}_{p-r_0}$ and $\partial_{i_1,\cdots,i_{r_0}} Z_n\in\mathcal{W}_{k-r_0}$. Besides $(p-r_0)+q\neq k-r_0$ since $k\le p+q-2<p+q$ hence
      \begin{equation*}
        \begin{split}
          \left|\Esp*{\partial_{i_1,\cdots,i_{r_0}} Z_n\partial_{i_1,\cdots,i_{r_0}} F_n X_n}\right|&=\frac{2}{p+q-k} \left|\Esp*{\partial_{i_1,\cdots,i_{r_0}} Z_n\Gamma[X_n,\partial_{i_1,\cdots,i_{r_0}} F_n]}\right|
                                                                                                    \\&\le \frac{2}{p+q-k}\|\partial_{i_1,\cdots,i_{r_0}} Z_n\|_2 \|\partial_{i_1,\cdots,i_{r_0}} F_n\|_2~\rho_{\lfloor\frac{q}{2}\rfloor}(X_n).
        \end{split}
      \end{equation*}
      The rest of the proof is identical to the previous case with just a minor change on the final constants.
  \end{enumerate}
\end{proof}

\begin{definition}
  For $(X_{n})$ a sequence in $\mathcal{W}_{q}$ that is asymptotically normal, we define the \emph{asymptotic independence algebra} as the set $\mathcal{A}(X_{n})$ of all sequences $(Y_{n})$ in $\mathbb{D}^{\infty}$ such that
  \begin{equation*}
    \Gamma(X_{n}, Y_{n}) \xrightarrow[n \to \infty]{L^{2}(\prob)} 0.
  \end{equation*}
\end{definition}

\begin{lemma}\label{th:independence:algebra}
  For $(X_{n})$ as above, the set $\mathcal{A}(X_{n})$ is an algebra stable by orthogonal projections on chaoses, $\mathsf{L}$, $\Gamma$, and composition with smooth functions with polynomial growth.
\end{lemma}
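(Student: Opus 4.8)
The plan is to verify the four closure properties in turn, ordered so that the later ones become essentially formal consequences of the earlier ones; the only step requiring more than the algebra of $\Gamma$ will be stability under the chaos projections, and this is where the asymptotic normality of $(X_{n})$ is used. Throughout I work with sequences bounded in $\mathbb{D}^{\infty}$, i.e.\ with $\sup_{n}\norm{\mathsf{L}^{k}Y_{n}}_{L^{p}(\prob)}<\infty$ for all $k$ and all $p<\infty$; this is the setting in which the lemma is applied, and it is what makes the $L^{2}$-limits below legitimate through the hypercontractive norm equivalence \cref{eq:wiener:norm-equivalence} (an $L^{2}$-convergence of a quantity with uniformly bounded higher moments upgrades to $L^{p}$-convergence by interpolation). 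That $\mathcal{A}(X_{n})$ is a vector space containing the constants is immediate from bilinearity of $\Gamma$ and $\Gamma(X_{n},c)=0$. Closure under composition with a smooth polynomial-growth function $\varphi$ — in particular under products, whence the algebra structure — follows from the chain rule \cref{eq:wiener:chain-rule} in multivariate form, $\Gamma(X_{n},\varphi(Y^{1}_{n},\dots,Y^{k}_{n}))=\sum_{j}\partial_{j}\varphi(Y^{1}_{n},\dots,Y^{k}_{n})\,\Gamma(X_{n},Y^{j}_{n})$: each $\partial_{j}\varphi(\cdot)$ is bounded in every $L^{p}(\prob)$ and each $\Gamma(X_{n},Y^{j}_{n})\to0$ in every $L^{p}(\prob)$, so a Cauchy--Schwarz/H\"older estimate closes the argument.

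The crux is stability under the chaos projections $\mathsf{J}_{m}$. Writing $Y_{n}=\sum_{m}\mathsf{J}_{m}Y_{n}$ gives $\Gamma(X_{n},Y_{n})=\sum_{m}\Gamma(X_{n},\mathsf{J}_{m}Y_{n})$. For $m<q$, \cref{th:wiener:gamma-asymptotic-chaos} yields $\Gamma(X_{n},\mathsf{J}_{m}Y_{n})\to0$ in $L^{2}(\prob)$ with no further input. For $m\ge q$, the same result — applied to $\mathsf{J}_{m}Y_{n}$ after normalisation, the degenerate case $\norm{\mathsf{J}_{m}Y_{n}}_{L^{2}(\prob)}\to0$ being absorbed by a subsequence argument — shows that $(\mathsf{L}+(m-q))\,\Gamma(X_{n},\mathsf{J}_{m}Y_{n})\to0$, i.e.\ $\Gamma(X_{n},\mathsf{J}_{m}Y_{n})$ coincides up to an $L^{2}(\prob)$-null term with its projection onto the single chaos $\mathcal{W}_{m-q}$ (the other components, living in a bounded-degree chaos by \cref{eq:wiener:multiplication-gamma}, are killed). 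Since the chaoses $\mathcal{W}_{m-q}$, $m\ge q$, are mutually orthogonal, applying $\mathsf{J}_{m_{0}-q}$ to the identity for $\Gamma(X_{n},Y_{n})$ annihilates every summand with $m\ne m_{0}$ in the limit — the tail $\sum_{m>M}$ being controlled uniformly in $n$ by the $\mathbb{D}^{\infty}$-bound — and identifies the $m=m_{0}$ summand with $\Gamma(X_{n},\mathsf{J}_{m_{0}}Y_{n})$ up to $o(1)$; hence $\mathsf{J}_{m_{0}-q}\Gamma(X_{n},Y_{n})=\Gamma(X_{n},\mathsf{J}_{m_{0}}Y_{n})+o(1)$, and $\Gamma(X_{n},Y_{n})\to0$ forces $\Gamma(X_{n},\mathsf{J}_{m_{0}}Y_{n})\to0$. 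Thus $\mathsf{J}_{m}Y_{n}\in\mathcal{A}(X_{n})$ for every $m$.

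Closure under $\mathsf{L}$ then follows at once: $\mathsf{L}Y_{n}=\sum_{m}(-m)\mathsf{J}_{m}Y_{n}$, and $\mathcal{A}(X_{n})$ is a vector space containing each term, the series being handled as above via the uniform $\mathbb{D}^{\infty}$-bound together with the asymptotic orthogonality of the $\Gamma(X_{n},\mathsf{J}_{m}Y_{n})$ (their asymptotic supports $\mathcal{W}_{m-q}$ being pairwise disjoint). Finally, closure under $\Gamma$ is purely formal: from the definition $\Gamma(Y_{n},Z_{n})=\tfrac{1}{2}\bigl(\mathsf{L}(Y_{n}Z_{n})-Y_{n}\mathsf{L}Z_{n}-Z_{n}\mathsf{L}Y_{n}\bigr)$, each of the three terms lies in $\mathcal{A}(X_{n})$ by the algebra and $\mathsf{L}$-stability properties already established, hence so does $\Gamma(Y_{n},Z_{n})$. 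The main obstacle is the second step: it is the only place where the asymptotic normality of $(X_{n})$ genuinely enters — through \cref{th:wiener:gamma-asymptotic-chaos} — and it requires the asymptotic-orthogonality bookkeeping, together with the uniform tail control, needed to pass from vanishing of the single quantity $\Gamma(X_{n},Y_{n})$ to vanishing of every $\Gamma(X_{n},\mathsf{J}_{m}Y_{n})$.
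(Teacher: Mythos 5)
Your proposal is correct and follows essentially the same route as the paper: algebra and composition stability via the chain rule \cref{eq:wiener:chain-rule}, the key projection stability via \cref{th:wiener:gamma-asymptotic-chaos} combined with the (asymptotic) orthogonality of the chaoses $\mathcal{W}_{m-q}$, then stability under $\mathsf{L}$ because it acts as a multiplier on chaoses, and finally stability under $\Gamma$ from its definition. You merely spell out details the paper leaves implicit (normalisation of $\mathsf{J}_{m}Y_{n}$, the degenerate vanishing-norm case, and tail control of the chaos expansion), which is consistent with how the lemma is used.
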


\begin{proof}
  The fact that $\mathcal{A}(X_{n})$ is an algebra stable by composition with smooth functions follows immediately from the fact that $\Gamma$ is bilinear and satisfies a chain rule \cref{eq:wiener:chain-rule}.
  Let us show that it is stable by projections.
  That would imply stability by $\mathsf{L}$ since $\mathsf{L}$ is a multiplication operator on chaoses, and thus by $\Gamma$ in view of the definition of $\Gamma$.
  Take $(Y_{n}) \in \mathcal{A}(X_{n})$.
  In view of \cref{th:wiener:gamma-asymptotic-chaos}:
  \begin{enumerate}[(i)]
    \item $\Gamma(X_{n}, \mathsf{J}_{r}Y_{n}) \to 0$ for $r < q$.
    \item  $\Gamma(X_{n}, \mathsf{J}_{r}Y_{n})$ are asymptotically in chaoses of different degree, for $r \geq q$.
  \end{enumerate}
  Thus writing
  \begin{equation*}
    \Gamma(X_{n}, Y_{n}) = \sum_{r \in \mathbb{N}} \Gamma(X_{n}, \mathsf{J}_{r} Y_{n}) \to 0,
  \end{equation*}
  and using the orthogonality of Wiener chaoses we conclude.
\end{proof}

We give a novel characterization of asymptotic independence in terms of the carré du champ, that justifies the name of $\mathcal{A}$.
\begin{proposition}\label{th:wiener:asymptotic-independence}
  Let $(X_{n})$ be a sequence in $\mathcal{W}_{q}$ asymptotically normal.
  Then every $(Y_{n}) \in \mathcal{A}(X_{n})$ is asymptotically independent of $(X_{n})$, namely
  \begin{equation*}
     \Esp*{ \varphi(X_{n}) \psi(Y_{n}) } - \Esp*{ \varphi(X_{n}) } \Esp*{ \psi(Y_{n}) }  \xrightarrow[n \to \infty]{} 0, \qquad \varphi,\psi \in \mathscr{C}_{b}(\mathbb{R}).
  \end{equation*}
\end{proposition}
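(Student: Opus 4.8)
\emph{Strategy and reductions.} The plan is to perform a Malliavin integration by parts, exploiting the fact that the asymptotic normality of $(X_{n})$ forces $\varphi(X_{n})$ to coincide, up to an $L^{2}(\prob)$‑negligible error, with $\mathsf{L}$ applied to a function of $X_{n}$ with bounded slope. First, by a routine approximation argument — mollifying $\varphi$ and $\psi$ and using that $(X_{n})$, being convergent in law, is tight — it suffices to treat the case where $\varphi,\psi \in \mathscr{C}^{\infty}(\mathbb{R})$ are bounded together with all their derivatives. Fix $N \sim \mathcal{N}(0,1)$, set $c_{0} \coloneq \Esp*{\varphi(N)}$, and let $\eta \in \mathscr{C}^{\infty}(\mathbb{R})$ be the solution, normalised by $\Esp*{\eta(N)} = 0$, of the one–dimensional Stein‑type equation
\[
  \eta''(x) - x\,\eta'(x) = -\tfrac{1}{q}\bigl(\varphi(x) - c_{0}\bigr), \qquad x \in \mathbb{R}.
\]
Since $\Esp*{\varphi(N) - c_{0}} = 0$ this equation is solvable, and the classical one–dimensional Stein estimates give $\norm{\eta'}_{\infty}, \norm{\eta''}_{\infty} < \infty$; in particular $\eta$ has at most linear growth, hence $\eta \in \mathscr{C}^{\infty}_{pol}$ and $\eta(X_{n}) \in \mathbb{D}^{\infty}$.

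\emph{The key approximate identity.} Next I would establish that
\[
  \varphi(X_{n}) - c_{0} = -\mathsf{L}\eta(X_{n}) + \delta_{n}, \qquad \norm{\delta_{n}}_{L^{2}(\prob)} \xrightarrow[n \to \infty]{} 0 .
\]
Indeed, the chain rule for $\mathsf{L}$ — which follows from \cref{eq:wiener:chain-rule}, \cref{eq:wiener:ipp} and \cref{eq:wiener:gamma-ell-2} — together with $\mathsf{L}X_{n} = -q X_{n}$ gives $\mathsf{L}\eta(X_{n}) = -q\,\eta'(X_{n})X_{n} + \eta''(X_{n})\,\Gamma[X_{n},X_{n}]$. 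Writing $\delta_{n} \coloneq \mathsf{L}\eta(X_{n}) + \varphi(X_{n}) - c_{0}$ and using the Stein equation to cancel the terms $q(\eta''(X_{n}) - X_{n}\eta'(X_{n}))$ and $\varphi(X_{n}) - c_{0}$, one finds
\begin{align*}
  \delta_{n}
  &= q\bigl(\eta''(X_{n}) - X_{n}\eta'(X_{n})\bigr) + \bigl(\varphi(X_{n}) - c_{0}\bigr) + \eta''(X_{n})\bigl(\Gamma[X_{n},X_{n}] - q\bigr) \\
  &= \eta''(X_{n})\bigl(\Gamma[X_{n},X_{n}] - q\bigr).
\end{align*}
Since $(X_{n})$ is asymptotically normal, \cref{th:wiener:normal-convergence} (equivalence of asymptotic normality with $\var\Gamma[X_{n},X_{n}] \to 0$), combined with the moment convergence on Wiener chaoses of \cref{th:polynomials:convergence-moment} which yields $\Esp*{\Gamma[X_{n},X_{n}]} = q\,\Esp*{X_{n}^{2}} \to q$, gives $\Gamma[X_{n},X_{n}] \to q$ in $L^{2}(\prob)$. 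Hence $\norm{\delta_{n}}_{L^{2}(\prob)} \le \norm{\eta''}_{\infty}\,\norm{\Gamma[X_{n},X_{n}] - q}_{L^{2}(\prob)} \to 0$.

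\emph{Conclusion.} Using $\Esp*{\mathsf{L}\eta(X_{n})} = 0$ (integration by parts \cref{eq:wiener:ipp} against the constant $1$) and the identity above,
\begin{align*}
  \Esp*{\varphi(X_{n})\psi(Y_{n})} - \Esp*{\varphi(X_{n})}\Esp*{\psi(Y_{n})}
  &= -\Esp*{\mathsf{L}\eta(X_{n})\,\psi(Y_{n})} + \Esp*{\delta_{n}\,\psi(Y_{n})} - \Esp*{\delta_{n}}\Esp*{\psi(Y_{n})}.
\end{align*}
The last two terms are bounded in absolute value by $2\norm{\psi}_{\infty}\norm{\delta_{n}}_{L^{2}(\prob)} \to 0$. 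For the first term, \cref{eq:wiener:ipp} and two applications of the chain rule \cref{eq:wiener:chain-rule} (once in each slot) give
\[
  -\Esp*{\mathsf{L}\eta(X_{n})\,\psi(Y_{n})} = \Esp*{\Gamma[\eta(X_{n}),\psi(Y_{n})]} = \Esp*{\eta'(X_{n})\,\psi'(Y_{n})\,\Gamma[X_{n},Y_{n}]},
\]
which is bounded by $\norm{\eta'}_{\infty}\norm{\psi'}_{\infty}\norm{\Gamma[X_{n},Y_{n}]}_{L^{2}(\prob)} \to 0$ because $(Y_{n}) \in \mathcal{A}(X_{n})$. This proves the asymptotic independence. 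The main obstacle — and the only substantive step — is the approximate identity $\varphi(X_{n}) = c_{0} - \mathsf{L}\eta(X_{n}) + \delta_{n}$: it is exactly here that the asymptotic normality of $(X_{n})$ is used, converted through $\Gamma[X_{n},X_{n}] \to q$ into the statement that $\varphi(X_{n})$ is, modulo an $L^{2}(\prob)$‑null sequence, $\mathsf{L}$ of a bounded‑slope function of $X_{n}$, for which the regularity of the Stein solution $\eta$ is essential; once this is in place the remainder is two integrations by parts, the chain rule, and Cauchy–Schwarz, the only other mild technicality being the reduction from $\mathscr{C}_{b}$ test functions to smooth ones.
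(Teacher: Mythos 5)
Your proof is correct, and it takes a genuinely different route from the paper's. The paper fixes $\psi \in \mathscr{C}^{\infty}_{b}(\mathbb{R})$, sets $c_{n}(t) \coloneq \Esp*{\psi(Y_{n})\mathrm{e}^{\mathrm{i}tX_{n}}} - \Esp*{\psi(Y_{n})}\mathrm{e}^{-t^{2}/2}$, and uses \cref{eq:wiener:ipp} and \cref{eq:wiener:chain-rule} to derive the approximate ODE $\dot{c}_{n}(t) = -t\,c_{n}(t) + o(1)$, concluding by a Gronwall-type argument; you instead solve the one-dimensional Stein equation for $\varphi$ and convert asymptotic normality into the $L^{2}(\prob)$-approximate identity $\varphi(X_{n}) - c_{0} = -\mathsf{L}\eta(X_{n}) + \delta_{n}$, after which a single integration by parts plus the chain rule finishes the argument. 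Both proofs rest on exactly the same two analytic inputs, namely $\Gamma[X_{n},X_{n}] \to q$ in $L^{2}(\prob)$ (via \cref{th:wiener:normal-convergence} and $\Esp*{X_{n}^{2}} \to 1$) and $\Gamma[X_{n},Y_{n}] \to 0$; what yours buys is a quantitative covariance bound, of the form $\norm{\eta'}_{\infty}\norm{\psi'}_{\infty}\norm{\Gamma[X_{n},Y_{n}]}_{L^{2}(\prob)} + 2\norm{\psi}_{\infty}\norm{\eta''}_{\infty}\norm{\Gamma[X_{n},X_{n}]-q}_{L^{2}(\prob)}$, valid for every smooth test pair, and it avoids the characteristic-function/Gronwall step; the price is the Stein regularity estimates and the second-order chain rule $\mathsf{L}\eta(F) = \eta'(F)\mathsf{L}F + \eta''(F)\Gamma[F,F]$, which the paper never states: it does follow from the listed calculus rules (test against an arbitrary $H \in \mathbb{D}^{\infty}$, use \cref{eq:wiener:ipp}, the derivation property of $\Gamma$ coming from \cref{eq:wiener:gamma-ell-2}, and \cref{eq:wiener:chain-rule}), but you should record that short derivation rather than merely cite the three displays. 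Finally, your reduction from $\mathscr{C}_{b}$ to smooth test functions implicitly requires tightness of $(Y_{n})$ (or uniform continuity of $\psi$), a point the paper's own proof also leaves implicit since it only treats $\varphi(x) = \mathrm{e}^{\mathrm{i}tx}$ and $\psi \in \mathscr{C}^{\infty}_{b}$; it is harmless where the proposition is applied (the $Y_{n}$ there are $L^{2}$-bounded chaoses), but worth flagging.
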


\begin{proof}
  Take $\psi \in \mathscr{C}^{\infty}_{b}(\mathbb{R})$ and define
  \begin{equation*}
    c_{n}(t) \coloneq \Esp*{ \psi(Y_{n}) \mathrm{e}^{\mathrm{i} t X_{n}} } - \Esp*{ \psi(Y_{n}) } \mathrm{e}^{-t^{2}/2}, \qquad t \in \mathbb{R}.
  \end{equation*}
  It suffices to show that $c_{n}(t) \to 0$ for all $t \in \mathbb{R}$.
  The function $c_{n}$ is smooth and verifies $c_{n}(0) = 0$.
  Moreover, we find
  \begin{equation*}
    \dot{c}_{n}(t) = \mathrm{i} t \Esp*{ X_{n} \psi(Y_{n}) \mathrm{e}^{\mathrm{i} tX_{n}} } + t \mathrm{e}^{-t^{2}/2} \Esp*{\psi(Y_{n})}.
  \end{equation*}
  By integration by parts \cref{eq:wiener:ipp} and the chain rule \cref{eq:wiener:chain-rule}, we thus find
  \begin{equation*}
    \dot{c}_{n}(t) = \frac{\mathrm{i} t}{q} \Esp*{ \Gamma(X_{n}, \psi(Y_{n})) \mathrm{e}^{\mathrm{i} t X_{n}} } - \frac{t^{2}}{q} \Esp*{ \Gamma(X_{n}, X_{n}) \mathrm{e}^{\mathrm{i} tX_{n}} \psi(Y_{n}) } + t \mathrm{e}^{-t^{2}/2} \Esp*{ \psi(Y_{n}) }.
  \end{equation*}
  Since $(Y_{n}) \in \mathcal{A}(X_{n})$, we have that the first term vanishes.
  Moreover, by \cref{th:wiener:normal-convergence} \cref{i:wiener:normal-convergence:variance}, we have that $\Gamma(X_{n}, X_{n}) \to q$.
  Thus
  \begin{equation*}
    \dot{c}_{n}(t) = - t c_{n}(t) + o(1).
  \end{equation*}
\end{proof}

\subsection{Decomposition of variables in the direction of strongest directional influence}
In this section we gather several technical lemmas allowing us to factor directions out of a non-asymptotically normal chaos.
Thus in all this section we fix $p \geq 2$ and we let $(F_n)_{n\ge 1}$ be a sequence in $\mathcal{W}_p$ with $\Esp*{F_n^2}=1$ that is not asymptotically Gaussian.
As eluded in the introduction, we want to factorize out of $F_{n}$ \emph{macroscopic directions}.
We formalize this intuition by measuring the importance of a direction using the directional influence.
This naturally leads to the following definition.

\begin{definition}
We define the \emph{degree of strongest directional influence}
\begin{equation*}
  q(F_{n}) = q \coloneq \min\set*{ k \in \mathbb{N}^{*} : \rho_k(F_n)\ \text{does not converge to}\ 0 }.
\end{equation*}
We call any sequence $(X_n)$ realising $\rho_{q}(F_{n})$ a \emph{direction of strongest influence}.
\end{definition}

\begin{remark}
  Since $F_n\not\to\mathcal{N}(0,1)$ then $\rho_{\floor{\frac{p}{2}}}(F_n) \not\to 0$ by \cref{th:wiener:normal-convergence} \cref{i:wiener:normal-convergence:influence-min} and $q \leq \floor{\frac{p}{2}}$ is well defined as the minimum of a non empty set of positive integers.
  Up to extracting a subsequence, we always assume that $\rho_q(F_n)$ is lower bounded.
\end{remark}

The following result ensures that directions of strongest influence are asymptotically Gaussian.
\begin{lemma}\label{th:wiener:decomposition:strongest-influence-gaussian}
  For any direction of strongest influence $(X_{n})$, we have
\begin{equation*}
  X_{n} \xrightarrow[n \to \infty]{\law} \mathcal{N}(0,1).
\end{equation*}
\end{lemma}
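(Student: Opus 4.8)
The strategy is to verify one of the criteria of \cref{th:wiener:normal-convergence} for the chaos sequence $(X_{n})$, by combining the approximate optimality of $X_{n}$ as a maximiser of $X \mapsto \norm{\Gamma[F_{n},X]}_{L^{2}(\prob)}$ over the unit ball of $\mathcal{W}_{q}$ with the fact that, by minimality of $q = q(F_{n})$ and \cref{th:directional-influence-monotone}, $\rho_{k}(F_{n}) \to 0$ for every $k \leq q-1$. If $q = 1$ there is nothing to prove, since $\mathcal{W}_{1}$ consists of centred Gaussian variables and $\Esp*{X_{n}^{2}} = 1$; so assume $q \geq 2$. Recall also that $q \leq \floor{p/2}$ (hence $p-q \geq q$), that $\rho_{q-1}(F_{n}) \to 0$, and that $\liminf_{n} \rho_{q}(F_{n}) > 0$.

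\emph{Locating the mass of $\Gamma[F_{n},X_{n}]$.} By \cref{eq:wiener:multiplication-gamma} one has $\Gamma[F_{n},X_{n}] = \sum_{m} \mathsf{J}_{m}\Gamma[F_{n},X_{n}]$ with $m$ running over $p-q, p-q+2, \dots, p+q-2$. I would first show that every summand with $m > p-q$ obeys $\norm{\mathsf{J}_{m}\Gamma[F_{n},X_{n}]}_{L^{2}(\prob)} \leq C_{p,q}\,\rho_{q-1}(F_{n}) \to 0$: such a component is built by contracting $F_{n}$ against $X_{n}$ over strictly fewer than $q$ coordinates, so the very same Cauchy--Schwarz manipulations (with $\mathsf{D}^{r}$, $r < q$) that produce \cref{eq:influence:cs} bound it by a directional influence $\rho_{r}(F_{n})$ with $r \leq q-1$, hence by $\rho_{q-1}(F_{n})$ via \cref{th:directional-influence-monotone}. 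It follows that the lowest-degree component $W_{n} \coloneq \mathsf{J}_{p-q}\Gamma[F_{n},X_{n}]$ carries asymptotically all of the $L^{2}$-norm of $\Gamma[F_{n},X_{n}]$, and in particular $\liminf_{n} \norm{W_{n}}_{L^{2}(\prob)} \geq \liminf_{n}\rho_{q}(F_{n}) > 0$.

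\emph{A reduced operator and an approximate eigenvector.} Since $X \mapsto \mathsf{J}_{p-q}\Gamma[F_{n},X]$ is linear on $\mathcal{W}_{q}$, the functional $X \mapsto \norm{\mathsf{J}_{p-q}\Gamma[F_{n},X]}^{2}_{L^{2}(\prob)}$ is a bounded positive quadratic form, which in the multiple-integral representation reads $x \mapsto c_{p,q}\,\psh{M_{n}x}{x}$, where $c_{p,q}>0$ and $M_{n}$ is the positive trace-class operator on symmetric $q$-tensors obtained by contracting the kernel of $F_{n}$ with itself over $p-q$ of its coordinates. By the previous step $\norm{\Gamma[F_{n},X]}^{2}_{L^{2}(\prob)} = c_{p,q}\psh{M_{n}x}{x} + O\bigl(\rho_{q-1}(F_{n})^{2}\norm{X}^{2}_{L^{2}(\prob)}\bigr)$ uniformly over $X \in \mathcal{W}_{q}$; consequently $\norm{M_{n}}_{\mathrm{op}}$ is bounded away from $0$, and, writing $\hat{x}_{n}$ for the normalised kernel of $X_{n}$ and $\nu_{n} \coloneq \psh{M_{n}\hat{x}_{n}}{\hat{x}_{n}}$, the fact that $X_{n}$ realises $\rho_{q}(F_{n})$ (up to $1/n$) forces $\norm{M_{n}}_{\mathrm{op}} - \nu_{n} \to 0$ with $\nu_{n}$ bounded away from $0$. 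A Jensen/Cauchy--Schwarz inequality in the eigenbasis of $M_{n}$ then upgrades this to $\norm{M_{n}\hat{x}_{n} - \nu_{n}\hat{x}_{n}} \to 0$: the vector $\hat{x}_{n}$ is an approximate top eigenvector of $M_{n}$.

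\emph{Partial traces, conclusion, and the main obstacle.} Fix $r \in \set{1,\dots,q-1}$. Tracing out $q-r$ of its $q$ legs turns $M_{n}$ into the positive operator on $r$-tensors obtained by contracting the kernel of $F_{n}$ with itself over $p-r$ coordinates; exactly as in the first step its operator norm is $\leq C_{p,r}\,\rho_{r}(F_{n})^{2} \leq C_{p,r}\,\rho_{q-1}(F_{n})^{2} \to 0$. The same partial trace applied to the rank-one projection $P_{n}$ onto $\hat{x}_{n}$ produces precisely the self-contraction of $\hat{x}_{n}$ over $q-r$ coordinates, an operator whose norm dominates $\norm{\hat{x}_{n} \widetilde{\otimes}_{q-r} \hat{x}_{n}}^{2}$. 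To transfer the smallness of the reduced $M_{n}$ to the reduced $P_{n}$ one would like to use $M_{n} \succeq \nu_{n} P_{n} - \norm{M_{n}\hat{x}_{n}-\nu_{n}\hat{x}_{n}}\,\mathsf{I}$, but the $\mathsf{I}$-term has infinite partial trace; this is the main obstacle, and it is handled by first conjugating everything by the spectral projector of $M_{n}$ onto $[\nu_{n}-\delta,\infty)$ — on which $M_{n}$ is bounded below by a positive constant, and which $\hat{x}_{n}$ misses by at most $\eta_{n}/\delta$ with $\eta_{n} = \norm{M_{n}}_{\mathrm{op}}-\nu_{n} \to 0$ — and then letting $\delta \to 0$ after $n\to\infty$. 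This gives $\norm{\hat{x}_{n}\widetilde{\otimes}_{s}\hat{x}_{n}} \to 0$ for every $s = 1,\dots,q-1$, i.e.\ the contraction (fourth-moment) criterion of \cite{NualartPecattiCLT}, equivalently $\var\Gamma[X_{n},X_{n}]\to 0$; by \cref{th:wiener:normal-convergence} we conclude that $X_{n} \xrightarrow[n\to\infty]{\law}\mathcal{N}(0,1)$. (The various symmetrisations of contractions only affect the constants throughout.)
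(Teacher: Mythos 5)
Your argument is correct in substance, but it takes a genuinely different route from the paper's. The paper proves the lemma by induction on $p$ and by contradiction: if a direction of strongest influence $(X_{n})$ in $\mathcal{W}_{q}$ were not asymptotically Gaussian, the induction hypothesis (applied to $(X_{n})$, which lives in a chaos of order $q<p$) produces an asymptotically Gaussian $(Y_{n})$ in some $\mathcal{W}_{r}$ with $\norm{\Gamma[X_{n},Y_{n}]}_{L^{2}(\prob)}\geq\delta$; projecting $X_{n}$ onto $Y_{n}\mathcal{W}_{q-r}$ and combining \cref{th:wiener:gamma-asymptotic-chaos} with \cref{th:wiener:spectral-inequality} shows that a macroscopic part of $X_{n}$ factors through $Y_{n}$, and removing that part improves the almost-optimal direction, yielding $\rho_{q}(F_{n})\leq(1-\delta'^{2})^{1/2}\rho_{q}(F_{n})$, a contradiction. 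You instead give a direct, non-inductive spectral argument at the level of kernels: $\hat{x}_{n}$ is an approximate top eigenvector of the positive trace-class operator $M_{n}$ with kernel the contraction of $f_{n}$ with itself over $p-q$ coordinates, the reduced operators obtained by partial traces are small because their quadratic forms are dominated by $\rho_{r}(F_{n})^{2}$ with $r<q$, and positivity of partial traces transfers this smallness to the contractions of $\hat{x}_{n}$, so the Nualart--Peccati contraction criterion applies. This buys a self-contained and more quantitative proof (a rate in terms of $\rho_{q-1}(F_{n})$ and the optimality defect), at the price of leaving the paper's purely $\Gamma$-based formalism and of two points you leave thin but which are genuinely fixable: first, the bound $\norm{\mathsf{J}_{m}\Gamma[F_{n},X_{n}]}_{L^{2}(\prob)}\leq C\,\rho_{q-1}(F_{n})$ for $m>p-q$ is true, but the clean derivation is via \cref{th:algebraic-lemma}, writing $\Esp*{\Gamma[F_{n},X_{n}]Z_{n}}=c\,\Esp*{F_{n}X_{n}Z_{n}}=c'\sum_{i}\Esp*{\Gamma[F_{n},\partial_{i}X_{n}]\,\partial_{i}Z_{n}}$ and applying Cauchy--Schwarz, rather than literally the computation leading to \cref{eq:influence:cs}; second, $\rho_{r}(F_{n})$ only tests symmetric kernels, so to control the quadratic form of the reduced operator on arbitrary $r$-tensors you need the (easy, but necessary) observation that this operator is of the form $T_{r}^{*}T_{r}$ and factors through symmetrisation because $f_{n}$ is symmetric --- this is an identity of suprema, not merely ``a matter of constants''. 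With these two remarks made explicit, your transfer step (restricting to the spectral subspace of $M_{n}$ above $\nu_{n}-\delta$, letting $n\to\infty$ and then $\delta\to 0$, and bounding the Hilbert--Schmidt norm of the reduced rank-one piece by the square root of operator norm times trace) goes through and the conclusion follows from \cref{th:wiener:normal-convergence}.
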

\begin{proof}
  We prove the claim by induction on $p$.
  \begin{enumerate}[(a),wide,nosep]
    \item \textbf{Initialisation}.
  Let $p\coloneq2$ then, $\floor{\frac{p}{2}} = 1$, and thus, $q = 1$.
  Hence, every $(X_{n})$ realising $\rho_{1}$ is an element of $\mathcal{W}_{1}$ with unit variance, that is $\Law*{X_{n}} = \mathcal{N}(0,1)$ for all $n \in \mathbb{N}$.

\item \textbf{Induction step}.
Assume that $p>2$, and that we have established the claim for all $p' < p$.
Take $(X_{n})$ realising $\rho_{q}(F_{n})$, and assume, by contradiction, that it is not asymptotically Gaussian.
Since $q \leq \floor{\frac{p}{2}} < p$, we can apply the induction hypothesis, on $(X_{n})$.
Thus, up to extracting a subsequence, there exists $r \leq \floor{\frac{q}{2}}$ and a sequence $(Y_n)$ living in $\mathcal{W}_r$ such that
\begin{align}
  &\label{eq:wiener:decomposition:bound-rho} \norm*{ \Gamma[X_n,Y_n]}_{L^{2}(\prob)} \ge \rho_r(X_n)-\frac{1}{n} \geq \delta>0,
\\&\label{eq:wiener:decomposition:Y-Gaussian} Y_n \xrightarrow[n \to \infty]{\law} \mathcal{N}(0,1).
\end{align}
Consider $A_{n}$ the orthogonal projection of $X_n$ on the closed vector space $Y_n \mathcal{W}_{q-r}$.
Thus, we write $X_n=A_n Y_n+R_n$ with $A_n\in\mathcal{W}_{q-r}$ and $R_n\perp Y_n \mathcal{W}_{q-r}$.
Using \cref{th:wiener:gamma-asymptotic-chaos}, $\norm*{(\mathsf{L}+q-r)[\Gamma[X_n,Y_n]]}_{L^{2}(\prob)} \to 0$.
In particular, $Y_{n} \Gamma(X_{n}, Y_{n})$ is asymptotically in $Y_{n} \mathcal{W}_{q-r}$.
Thus, using properties of orthogonal projections, we find:
\begin{equation*}
  \Esp*{X_n Y_n \Gamma[X_n,Y_n]} - \Esp*{A_n Y_n \Gamma[X_n,Y_n]} \xrightarrow[n \to \infty]{} 0.
\end{equation*}

On the other hand, combining \cref{th:wiener:spectral-inequality,eq:wiener:decomposition:bound-rho},
\begin{equation*}
  \Esp*{X_n Y_n \Gamma[X_n,Y_n]} \ge \frac{p+r}{4} \Esp*{\Gamma[X_n,Y_n]^2} \ge \frac{p+r}{4} \delta^2.
\end{equation*}
Hence, the two previous equations with the Cauchy--Schwarz inequality gives
\begin{equation*}
  \frac{p+r}{4} \delta^2\le \|A_n Y_n\|_{L^{2}(\prob)} \|\Gamma[X_n,Y_n]\|_{L^{2}(\prob)},
\end{equation*}
which implies that for some $\delta'>0$ we have $\|A_n Y_n\|\ge\delta'$.
Then, since $\Esp*{F_{n}^{2}}=1$, by orthogonality of $R_n$ and $A_n Y_n$ we have also $\Esp*{R_n^{2}} \le 1-\delta'^2$.
Moreover, the chain rule \cref{eq:wiener:chain-rule} implies that 
\begin{equation*}
  \Gamma[F_n,X_n]=\Gamma[F_n,A_n] Y_n + A_n \Gamma[F_n, Y_n] + \sum_{s = 1}^{q-1} \Gamma[F_n,\mathsf{J}_{r}R_n] + \Gamma[F_n,\mathsf{J}_{q}R_n].
\end{equation*}
Assume for the moment, that $(A_n)_{n\ge 1}$ is bounded in $L^2$, then all the terms in the right-hand side vanish, but the last one, by minimality of $q$, as they all involve quantities of the form $\Gamma[F_n,Z_n]$ with $Z_n$ a bounded sequence in $\mathcal{W}_{t}$ with $t<q$.
Since $\Esp*{ R_{n}^{2}} \le 1-\delta'$, taking into account the normalization in \cref{df:directional-influence}, the last term has a norm bounded from above by $( 1-\delta'^2 )^{1/2} \rho_q(F_n)$.
Hence, letting $n\to\infty$ in the above equation yields that $\rho_q(F_n)\le ( 1-\delta'^2 )^{1/2}\rho_q(F_n)$ which is absurd, hence the result.

It remains to prove that $(A_n)_{n\ge 1}$ is a $L^{2}(\prob)$-bounded sequence.
To do so, we combine the equivalence of norms \cref{eq:wiener:norm-equivalence} with the fact that $Y_n\to\mathcal{N}(0,1)$, which is \cref{eq:wiener:decomposition:Y-Gaussian}.
We thus have
\begin{eqnarray*}
  \|A_n\|_{L^{2}(\prob)} &\le \frac{1}{M}\|A_n Y_n\|_{L^{2}(\prob)} +\|A_n 1_{|Y_n|<M}\|_{L^{2}(\prob)}
                         \\&\le \frac{1}{M}+3^{\frac r 2}\|A_n\|_{L^{2}(\prob)} \Prob*{ |Y_n|\le M }^{\frac 1 4}.
\end{eqnarray*}
Since, $\Prob*{ |Y_n|\le M }\to \Prob*{ |N|\le M }$ for $N\sim\mathcal{N}(0,1)$, for $M$ small enough fixed and $n$ large enough one has $3^{\frac{r}{2}} \Prob*{|Y_n|\le M}^{\frac 1 4}\le \frac{1}{2}$ which by the above inequality gives that $\|A_n\|_{L^{2}(\prob)} \le \frac{2}{M}$ for $n$ large enough, hence the result.
\end{enumerate}
\end{proof}

We now present the decomposition procedure, which serves as the cornerstone of our proof by induction.
\begin{lemma}\label{th:wiener:decomposition:once}
  Let $p \in \mathbb{N}^{*}$ and $(F_n)_{n\ge 1}$ be a sequence in $\mathcal{W}_p$ with $\Esp*{F_n^2}=1$.
  Take an asymptotically normal sequence $(X_n)_{n\ge 1}$ in $\mathcal{W}_q$ with $q \in \{1, \dots, p-1\}$.
Then, the following decomposition holds
\begin{equation}\label{eq:wiener:decomposition:once}
  F_n = \sum_{l=1}^{\floor{\frac{p}{q}}} A_{n,l} H_{l}(X_n)+A_{n,0},
\end{equation}
where, for every $l \in \set*{ 0, \dots,\floor*{\frac{p}{q}} }$, $A_{n,l} \in \mathcal{W}_{\leq (p - lq)}$ is
\begin{enumerate}[(i)]
  \item {\bf asymptotically independent:} $(A_{n,l}) \in \mathcal{A}(X_{n})$, namely 
  \begin{equation*}
    \Gamma[A_{n,l},X_n]\ \xrightarrow[n \to \infty]{L^{2}(\prob)} 0.
  \end{equation*}
\item {\bf asymptotically chaoses:}
  \begin{equation*}
    (\mathsf{L}+p-lq)A_{n,l} \xrightarrow[n \to \infty]{L^{2}(\prob)} 0.
\end{equation*}
\end{enumerate}
\end{lemma}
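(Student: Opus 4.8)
The plan is to construct the coefficients $A_{n,l}$ explicitly, by a downward recursion on $l$ driven by the operator $\mathfrak{d}_n \coloneq \tfrac1q\Gamma[\,\cdot\,,X_n]$. Since $(X_n)$ is asymptotically normal, \cref{th:wiener:normal-convergence} gives $\Gamma[X_n,X_n]\to q$ in $L^2$ and $(\mathsf{L}+lq)H_l(X_n)\to 0$ for every $l$, so each $H_l(X_n)$ is asymptotically a chaos of degree $lq$ and $\mathfrak{d}_n$ asymptotically differentiates with respect to $X_n$: for any bounded $(B_n)\in\mathcal{A}(X_n)$ one has $\mathfrak{d}_n\bigl(B_nH_l(X_n)\bigr)=l\,B_nH_{l-1}(X_n)+o(1)$ in $L^2$, by the Leibniz and chain rules for $\Gamma$. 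Put $L\coloneq\lfloor p/q\rfloor$, write $\mathsf{J}_{\leq r}\coloneq\sum_{m\leq r}\mathsf{J}_m$, and define recursively, for $l=L,L-1,\dots,1$,
\begin{equation*}
  A_{n,l}\coloneq\frac1{l!}\,\mathsf{J}_{\leq(p-lq)}\!\left(\mathfrak{d}_n^{\,l}F_n-\sum_{j=l+1}^{L}\frac{j!}{(j-l)!}\,A_{n,j}\,H_{j-l}(X_n)\right),
\end{equation*}
and finally $A_{n,0}\coloneq F_n-\sum_{l=1}^{L}A_{n,l}H_l(X_n)$. Then \cref{eq:wiener:decomposition:once} holds by construction; moreover, since $\mathfrak{d}_n$ maps $\mathcal{W}_{\leq m}$ into $\mathcal{W}_{\leq(m+q-2)}$ by \cref{eq:wiener:multiplication-gamma} while $A_{n,j}H_{j-l}(X_n)\in\mathcal{W}_{\leq(p-lq)}$ for $j>l$, the projection is well posed and $A_{n,l}\in\mathcal{W}_{\leq(p-lq)}$ for every $l$, as required.

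The first substantial step is to establish, by an auxiliary \emph{upward} induction on $m$, that $\mathfrak{d}_n^{\,m}F_n$ is bounded in $L^2$ and asymptotically a chaos of degree $p-mq$ (meaning $\mathfrak{d}_n^{\,m}F_n\to0$ once $p-mq<0$). The base case $m=0$ is immediate. For the step, writing $\mathfrak{d}_n^{\,m}F_n=C_n+o(1)$ with $C_n\in\mathcal{W}_{p-mq}$ and using that $\Gamma[\,\cdot\,,X_n]$ is bounded on any fixed sum of chaoses uniformly in $n$, one gets $\mathfrak{d}_n^{\,m+1}F_n=\tfrac1q\Gamma[C_n,X_n]+o(1)$; applying \cref{th:wiener:gamma-asymptotic-chaos} to the pair $(C_n,X_n)$ then shows that $\tfrac1q\Gamma[C_n,X_n]$ is asymptotically in $\mathcal{W}_{p-(m+1)q}$, or tends to $0$ when $q>p-mq$. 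A minor point is that \cref{th:wiener:gamma-asymptotic-chaos} is stated for a unit-variance first argument; this is dealt with by rescaling along subsequences, the case $\|C_n\|_2\to0$ being trivial.

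With this in hand, the two asserted properties are proved together, by downward induction on $l\in\{L,\dots,1\}$, along with the auxiliary identity $\mathfrak{d}_n^{\,l}F_n=\sum_{j=l}^{L}\tfrac{j!}{(j-l)!}A_{n,j}H_{j-l}(X_n)+o(1)$ in $L^2$. Writing $E_n^{(l)}$ for the argument of $\mathsf{J}_{\leq(p-lq)}$ in the definition of $A_{n,l}$, one first checks that $E_n^{(l)}$ is bounded and asymptotically in $\mathcal{W}_{p-lq}$: this uses the upward induction for $\mathfrak{d}_n^{\,l}F_n$, the inductive hypothesis for $A_{n,j}$ with $j>l$ (asymptotic membership in $\mathcal{W}_{p-jq}$ together with $A_{n,j}\in\mathcal{A}(X_n)$), the fact that $H_{j-l}(X_n)$ is asymptotically in $\mathcal{W}_{(j-l)q}$, and the elementary remark that if $(\alpha_n),(\beta_n)$ are bounded, asymptotically in $\mathcal{W}_a$ respectively $\mathcal{W}_b$, and $\Gamma[\alpha_n,\beta_n]\to0$, then $(\alpha_n\beta_n)$ is asymptotically in $\mathcal{W}_{a+b}$, because $(\mathsf{L}+a+b)(\alpha_n\beta_n)=2\Gamma[\alpha_n,\beta_n]+o(1)$. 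Hence $l!\,A_{n,l}=\mathsf{J}_{\leq(p-lq)}E_n^{(l)}=E_n^{(l)}+o(1)$, which is property (ii) for $A_{n,l}$ and gives the auxiliary identity at level $l$. For property (i) one computes $\Gamma[E_n^{(l)},X_n]$: the contribution of $\Gamma[\mathfrak{d}_n^{\,l}F_n,X_n]=q\,\mathfrak{d}_n^{\,l+1}F_n$ is evaluated through the auxiliary identity at level $l+1$, the contributions of $\Gamma[A_{n,j}H_{j-l}(X_n),X_n]$ are expanded by the Leibniz and chain rules together with $\Gamma[X_n,X_n]\to q$ and $A_{n,j}\in\mathcal{A}(X_n)$, and the two resulting finite sums cancel term by term, the combinatorial factors matching since $\tfrac{j!}{(j-l)!}(j-l)=\tfrac{j!}{(j-l-1)!}$; therefore $\Gamma[A_{n,l},X_n]\to0$. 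Finally the coordinate $A_{n,0}=F_n-\sum_{l\geq1}A_{n,l}H_l(X_n)$ is handled directly by the same two computations, using $F_n\in\mathcal{W}_p$ and the properties of $A_{n,1},\dots,A_{n,L}$ just obtained.

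The main obstacle is the intertwined bookkeeping: properties (i), (ii), and the auxiliary differentiation identity have to be carried along a single induction since each feeds the others, and — because \cref{th:wiener:gamma-asymptotic-chaos} requires its first argument to lie \emph{exactly} in a chaos whereas the iterates $\mathfrak{d}_n^{\,m}F_n$ do so only asymptotically — one must interleave the separate upward induction controlling the remainders. Keeping track of which $L^p$-norm is used at each estimate is routine, all norms being equivalent on a fixed sum of chaoses by \cref{eq:wiener:norm-equivalence}.
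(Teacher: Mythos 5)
Your construction is correct, but it follows a genuinely different route from the paper. The paper obtains the decomposition by orthogonally projecting $F_n$ onto the $L^{2}$-closure of $X_n\mathcal{W}_{p-q}+\mathcal{W}_{\leq(p-1)}$, proves asymptotic independence of the remainder via \cref{th:algebraic-lemma} and \cref{th:wiener:gamma-asymptotic-chaos}, derives the asymptotic-chaos property from asymptotic independence by applying $\mathsf{L}+p$ to the identity \cref{eq:wiener:decomposition:once}, and then produces the full expansion by an induction on the degree $p$: the coefficient $A_n\in\mathcal{W}_{p-q}$ of the projection is decomposed by the induction hypothesis and reassembled using the Hermite three-term recurrence. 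You instead build each $A_{n,l}$ explicitly, as a projection of the iterated normalized carré du champ $\mathfrak{d}_n^{\,l}F_n$ corrected by the higher coefficients --- the asymptotic analogue of extracting Hermite coefficients by repeated differentiation in the direction $X_n$ --- with an upward induction (via \cref{th:wiener:gamma-asymptotic-chaos}) controlling the iterates and a downward recursion in $l$ yielding (i), (ii) and the differentiation identity simultaneously; the combinatorial cancellation $\tfrac{j!}{(j-l)!}(j-l)=\tfrac{j!}{(j-l-1)!}$ you use is exactly what makes $\Gamma[A_{n,l},X_n]\to0$. Your approach buys a proof that is non-inductive in $p$, gives closed-form candidates for the coefficients, and avoids both the closure/$o(1)$ bookkeeping of the paper's projection step and the Hermite-recurrence reshuffling; the paper's approach buys a shorter argument per degree (one orthogonal projection plus the induction hypothesis) and some extra structural information about the remainder (its orthogonality to $X_n\mathcal{W}_{p-q}+\mathcal{W}_{\leq(p-1)}$). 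Two points you lean on that the paper does not isolate as statements, but which are true and should be recorded if this proof were written out: the uniform $L^{2}$-boundedness of $U\mapsto\Gamma[U,X_n]$ on a fixed finite sum of chaoses (it follows from the pointwise Cauchy--Schwarz bound $\abs{\Gamma[U,X_n]}\leq\Gamma[U,U]^{1/2}\Gamma[X_n,X_n]^{1/2}$ together with \cref{eq:wiener:norm-equivalence}, or from \cref{th:wiener:spectral-inequality}), and the rescaling/subsequence reduction you mention to apply \cref{th:wiener:gamma-asymptotic-chaos} to the non-normalized chaotic parts of $\mathfrak{d}_n^{\,m}F_n$; with these in place the remaining manipulations are of the same routine $L^{2}$/$L^{4}$-on-fixed-chaoses type the paper itself performs.
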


\begin{proof}
  Introduce the vector space $E_n$ to be the $L^{2}(\prob)$-closure of $X_n \mathcal{W}_{p-q} +{\mathcal{W}_{\leq (p-1)}}$.
 Let $P_{n}$ be the orthogonal projection of $F_n$ on $E_n$.
 Write $F_n= P_{n} + R_n$ where $R_n \in E_{n}^{\perp}$.
 We proceed by induction with some preparatory steps.

 \begin{enumerate}[(a),wide,nosep]
   \item 
     \textbf{Asymptotic independence of $(R_{n})$}.
     Using \cref{eq:wiener:multiplication}), $X_n \mathcal{W}_{p-q}\subset \mathcal{W}_{\leq p}$ which implies that $E_n\subset \mathcal{W}_{\leq p}$, and thus $R_n=F_n-P_{n} \in \mathcal{W}_{\leq p}$.
     Besides, since $\mathcal{W}_{\leq (p-1)} \subset E_{n}$, we have $(E_n)^{\perp}\subset \mathcal{W}_{\leq (p-1)}^{\perp}$ guaranteeing that $R_n \in \mathcal{W}_{\leq p} \cap \mathcal{W}_{\leq (p-1)}^{\perp} = \mathcal{W}_p$.
     Assume by contradiction that $\Gamma[R_n,X_n]\not\to 0$.
     Up to extracting a subsequence one finds $\delta>0$ such that $\|\Gamma[R_n,X_n]\|\ge \delta$.
     By definition of orthogonal projection, and \cref{th:algebraic-lemma}, we get
\begin{equation*}
\Esp*{R_n X_n A} = 0
= \frac{1}{q}\Esp*{\Gamma[R_n,X_n]A}.
\end{equation*}
On the other hand, using \cref{th:wiener:gamma-asymptotic-chaos}, we infer that $\Gamma[R_n,X_n]$ asymptotically belongs to $\mathcal{W}_{p-q}$.
Choosing $A \coloneq \mathsf{J}_{p-q}(\Gamma[R_n,X_n])$, which by \cref{th:wiener:gamma-asymptotic-chaos} is $\Gamma[R_n,X_n] + o_{L^{2}(\prob)}(1)$, we find that for $n$ large enough that $\Esp*{R_n X_n A}\ge \frac{\delta^2}{2}>0$ which is contradictory.
Thus, we deduce that $\Gamma[R_n,X_n]\to 0$

\item \textbf{Asymptotic chaos from asymptotic independence}.
  Let us assume that we are given a decomposition \cref{eq:wiener:decomposition:once} with $(A_{n,l}) \in \mathcal{A}(X_{n})$ and let us show that they are automatically asymptotically chaotic.
  Since $F_{n} \in \mathcal{W}^{p}$ applying $(\mathsf{L} + p\mathsf{I})$ to \cref{eq:wiener:decomposition:once}, and developing $\mathsf{L}(AX)$ using the definition of $\Gamma$ we find that
  \begin{equation*}
  0 = \sum_{l=1}^{\floor{\frac{p}{q}}} \bracket[\Big]{2 \Gamma(A_{n,l}, H_{l}(X_{n})) + A_{n,l} (\mathsf{L} + lq) H_{l}(X_{n}) + H_{l}(X_{n}) (\mathsf{L} + p - lq) A_{n,l} } + (\mathsf{L} + p) A_{n,0}.
  \end{equation*}
  Since $A_{n,l} \in \mathcal{A}(X_{n})$, by the chain rule \cref{eq:wiener:chain-rule}, $\Gamma(A_{n,l}, H_{l}(X_{n})) \to 0$, in $L^{2}(\prob)$; while, by \cref{th:wiener:normal-convergence} \cref{i:wiener:normal-convergence:eigenvalue} $(\mathsf{L} + lq) H_{l}(X_{n}) \to 0$ also in $L^{2}(\prob)$.
  On the other hand, $(A_{n,l}) \in \mathcal{A}(X_{n})$ thus by \cref{th:wiener:asymptotic-independence} the $A_{n,l}$'s are asymptotically independent of $(X_{n})$.
  In particular, using that asymptotically the $H_{l}(X_{n})$ are orthogonal for different values of $l$, we find, taking the variance in the above equation that
  \begin{equation*}
    0 = \sum_{l=0}^{\floor{\frac{p}{q}}} \Esp*{ \paren*{ (\mathsf{L} + lq) A_{n,l}}^{2} } + o(1).
  \end{equation*}
  This shows that the $A_{n,l}$'s are indeed asymptotically chaotic.
\end{enumerate}
It remains to show that a decomposition such as \cref{eq:wiener:decomposition:once} exists a that it satisfies the asymptotic independence, which we prove by induction.

\begin{enumerate}[(a),wide,nosep,resume]
\item \textbf{Initialisation}.
  Let $p \coloneq 2$. 
Since $q<p$, we have $q=1$.
Considering as before the orthogonal projection of $F_n$ on $E_{n}$, the closure of  $X_n\mathcal{W}_1 + \mathcal{W}_{\leq 1}$, we write $F_{n} = P_{n} + R_{n}$.
The previous step ensures that $\Gamma(F_{n}, R_{n}) \to 0$.
Since $X_{n} \in \mathcal{W}_{1}$ we actually have that
\begin{equation*}
X_{n} \mathcal{W}_{1} + \mathcal{W}_{\leq 1} = H_{2}(X_{n}) \mathbb{R} \oplus X_{n} (X_{n}^{\perp} \cap \mathcal{W}_{\leq 1}) \oplus (X_{n}^{\perp} \cap \mathcal{W}_{\leq 1}),
\end{equation*}
which is already closed.
Thus $P_{n} = a_{n} H_{2}(X_{n}) + X_{n} N_{n} + B_{n}$ for some $a_{n} \in \mathbb{R}$, $N_{n} \in \mathcal{W}_{\leq 1}$ independent of $X_{n}$ and $B_{n} \in \mathcal{W}_{\leq 1}$, also independent of $X_{n}$.
We claim that setting $A_{n,2} \coloneq a_{n}$, $A_{n,1} \coloneq N_{n}$, and $A_{n,0} \coloneq B_{n} + R_{n}$ yields the desired decomposition.
Indeed, since $N_{n}$, $B_{n}$ and $a_{n}$ are independent of $X_{n}$, we get that
\begin{equation*}
  \Gamma(X_{n}, a_{n}) = \Gamma(X_{n}, N_{n}) = \Gamma(X_{n}, B_{n}) = 0,
\end{equation*}
Since $\Gamma(X_{n}, R_{n}) \to 0$, we have that the asymptotic independence is satisfied, by \cref{th:wiener:asymptotic-independence}.

\item \textbf{Induction step}.
Take $p > 2$ and assume that the decomposition is proved for all $p' < p$.
In this case, although it must be possible to decompose $E_{n}$ in a direct sum as in the initialization step, since we are dealing with Wiener chaoses of degree more than $1$, orthogonality and independence are not the same and that would render finding this orthogonal decomposition more tedious, thus we adopt a slightly different strategy.
Since we are projecting on $E_{n}$ that is a closure, we write $F_n = X_n A_n + B_n + R_n+o(1)$ where $A_n \in \mathcal{W}_{p-q}$, $B_{n} \in \mathcal{W}_{\leq (p-1)}$, and where the $o(1)$ represent the approximation of the closure by elements of $E_{n}$.
Since $p-q<p$, we apply our induction hypothesis on $A_{n}$, this yields coefficients $C_{n,l} \in \mathcal{A}(X_{n})$ such that 
\begin{equation*}
  A_n=\sum_{l=1}^{\floor*{\frac{p-q}{q}}}C_{n,l} H_l(X_n).
\end{equation*}
Then using well-known recursive formulas for Hermite polynomials, we get
\begin{equation*}
  A_n X_n = \sum_{l=2}^{\floor{\frac{p-q}{q}}+1}C_{l-1,n}H_l(X_n) + \sum_{l=0}^{\floor{ \frac{p-q}{q}} -1}(l+1) C_{l+1,n}H_l(X_n).
\end{equation*}
We handle $B_{n}$ similarly by considering its different chaotic decomposition.
As a result, one can decompose $A_n X_n +B_n$ as linear combinations of $(H_l(X_n))_l$ with coefficients in $\mathcal{A}(X_{n})$.
The maximal index $l$ involved in this decomposition is given by $\floor{\frac{p-q}{q}}+1\le \floor{\frac{p}{q}}$.
\end{enumerate}
\end{proof}

Since by \cref{th:wiener:decomposition:strongest-influence-gaussian} any direction of strongest influence is asymptotically normal, these directions are natural candidates to apply \cref{eq:wiener:decomposition:once}.
We now establish that iterating this decomposition is a compatible with our influence-based criterion.

\begin{lemma}\label{th:wiener:decomposition:influence}
  Apply the decomposition \cref{eq:wiener:decomposition:once} with $(X_{n})$ a direction of strongest influence.
   Then, for some constant $c_{p,q}>0$ only depending on $(q,p)$, we have $\|A_{0,n}\|_{L^{2}(\prob)} \le 1-c_{p,q}\delta$.
  Moreover,
  \begin{equation*}
    \rho_{r}(A_{0,n}) \xrightarrow[n \to \infty]{} 0, \qquad r < q.
  \end{equation*}
\end{lemma}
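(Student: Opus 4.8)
Throughout, $\delta>0$ denotes a lower bound of $\rho_q(F_n)$ along the subsequence under consideration, so that $\norm{\Gamma[F_n,X_n]}\ge\rho_q(F_n)-\tfrac{1}{n}\ge\delta-\tfrac{1}{n}$, and recall $X_n\to\mathcal{N}(0,1)$ by \cref{th:wiener:decomposition:strongest-influence-gaussian}. \emph{For the norm estimate}, the plan is to show that $X_n$ carries off a definite portion of the $L^{2}$-mass of $F_n$. Differentiating \cref{eq:wiener:decomposition:once} against $X_n$ by the Leibniz and chain rules (\cref{eq:wiener:chain-rule}),
\begin{equation*}
  \Gamma[F_n,X_n]=\Gamma[X_n,X_n]\sum_{l\ge1}A_{n,l}\,H_l'(X_n)+\sum_{l\ge0}H_l(X_n)\,\Gamma[A_{n,l},X_n],
\end{equation*}
where $H_l'$ is a positive scalar multiple of $H_{l-1}$. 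By \cref{th:wiener:decomposition:once} each $(A_{n,l})\in\mathcal{A}(X_n)$, so $\Gamma[A_{n,l},X_n]\to0$ in $L^{2}(\prob)$; since $H_l(X_n)$ is bounded in every $L^{s}(\prob)$ and $\Gamma[A_{n,l},X_n]$ lies in a chaos of bounded degree, \cref{eq:wiener:norm-equivalence} and Cauchy--Schwarz make the second sum vanish in $L^{2}(\prob)$. As $\Gamma[X_n,X_n]\to q$ in $L^{2}(\prob)$ by \cref{th:wiener:normal-convergence}, the first sum equals $q\sum_{l\ge1}A_{n,l}H_l'(X_n)+o_{L^{2}(\prob)}(1)$. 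Taking squared $L^{2}$-norms and using the asymptotic independence of $(A_{n,l})$ from $X_n$ (\cref{th:wiener:asymptotic-independence}) together with the asymptotic orthogonality, with constant norms, of the $H_{l-1}(X_n)$'s, we get $\norm{\Gamma[F_n,X_n]}^{2}=q^{2}\sum_{l\ge1}\kappa_l\,\Esp*{A_{n,l}^{2}}+o(1)$ with $\kappa_l>0$ depending only on $l$; hence $\sum_{l\ge1}\Esp*{A_{n,l}^{2}}\ge c_{p,q}\,\delta^{2}-o(1)$. Expanding $1=\Esp*{F_n^{2}}$ along \cref{eq:wiener:decomposition:once} with the same independence and orthogonality gives $1=\sum_{l\ge1}\lambda_l\,\Esp*{A_{n,l}^{2}}+\Esp*{A_{n,0}^{2}}+o(1)$ with $\lambda_l\ge1$, so $\Esp*{A_{n,0}^{2}}\le1-c_{p,q}\delta^{2}+o(1)$, the asserted strict decrease of the $L^{2}$-norm.

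\emph{For the influence estimate}: since $(\mathsf{L}+p)A_{0,n}\to0$ in $L^{2}(\prob)$, we may replace $A_{0,n}$ by $\mathsf{J}_pA_{0,n}\in\mathcal{W}_p$, which alters $\rho_r(A_{0,n})$ by a vanishing amount because $\rho_r$ is Lipschitz on $\mathcal{W}_{\le p}$ for $\norm{\cdot}_{L^{2}(\prob)}$ (subadditivity of $\rho_r$ together with \cref{th:wiener:spectral-inequality}); so assume $A_{0,n}\in\mathcal{W}_p$. If $A_{0,n}$ is asymptotically Gaussian then $\rho_s(A_{0,n})\to0$ for all $s\le\floor*{\frac{p}{2}}$ by \cref{th:wiener:normal-convergence}, and $r<q\le\floor*{\frac{p}{2}}$ settles it; and if the degree of strongest influence $r_0\coloneq q(A_{0,n})$ satisfies $r_0\ge q$, then $\rho_r(A_{0,n})\to0$ for all $r<q$ by definition of $r_0$. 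So suppose, aiming at a contradiction, $r_0<q$; extract a subsequence along which $\rho_{r_0}(A_{0,n})\ge\delta_0>0$ and $\norm{A_{0,n}}$ is bounded below, and fix $(Z_n)\subset\mathcal{W}_{r_0}$, $\norm{Z_n}=1$, realising $\rho_{r_0}(A_{0,n})$.

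The key point is that $(Z_n)$ is asymptotically Gaussian: \cref{th:wiener:decomposition:strongest-influence-gaussian}, applied to $A_{0,n}/\norm{A_{0,n}}$ (a unit-variance, non-asymptotically-Gaussian element of $\mathcal{W}_p$ with degree of strongest influence $r_0$, of which $(Z_n)$ is a direction of strongest influence), gives $Z_n\to\mathcal{N}(0,1)$. Then $\deg Z_n=r_0<q=\deg X_n$ and $X_n\to\mathcal{N}(0,1)$, so \cref{th:wiener:gamma-asymptotic-chaos} applied to the pair $(Z_n,X_n)$ yields $\Gamma[X_n,Z_n]\to0$ in $L^{2}(\prob)$, i.e.\ $(Z_n)\in\mathcal{A}(X_n)$. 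Differentiating \cref{eq:wiener:decomposition:once} against $Z_n$,
\begin{equation*}
  \Gamma[F_n,Z_n]=\Gamma[A_{0,n},Z_n]+\sum_{l\ge1}H_l(X_n)\,\Gamma[A_{n,l},Z_n]+\Gamma[X_n,Z_n]\sum_{l\ge1}A_{n,l}H_l'(X_n),
\end{equation*}
where the last term vanishes in $L^{2}(\prob)$ (its coefficient is bounded in $L^{4}(\prob)$ and $\Gamma[X_n,Z_n]\to0$) and $\norm{\Gamma[F_n,Z_n]}\le\rho_{r_0}(F_n)\to0$ because $r_0<q=q(F_n)$. Thus $\Gamma[A_{0,n},Z_n]+\sum_{l\ge1}H_l(X_n)\Gamma[A_{n,l},Z_n]\to0$ in $L^{2}(\prob)$, and it remains to show the two blocks are asymptotically orthogonal. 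For this, note that $\mathcal{A}(X_n)$ is an algebra stable under $\Gamma$ (\cref{th:independence:algebra}): since $(A_{0,n}),(A_{n,l}),(Z_n)\in\mathcal{A}(X_n)$, the product $\Gamma[A_{0,n},Z_n]\,\Gamma[A_{n,l},Z_n]$ again lies in $\mathcal{A}(X_n)$, hence is asymptotically independent of $X_n$ by \cref{th:wiener:asymptotic-independence}; as $\Esp*{H_l(X_n)}\to0$ for $l\ge1$, this forces $\Esp*{H_l(X_n)\,\Gamma[A_{0,n},Z_n]\,\Gamma[A_{n,l},Z_n]}\to0$. Expanding the square of the vanishing sum then gives $\norm{\Gamma[A_{0,n},Z_n]}\to0$, contradicting $\rho_{r_0}(A_{0,n})\ge\delta_0$; hence $\rho_r(A_{0,n})\to0$ for all $r<q$.

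The main obstacle, I expect, is this second half: establishing that the realising direction $(Z_n)$ is asymptotically Gaussian — which forces one to pass to the top chaos $\mathsf{J}_pA_{0,n}$ and to control normalisations when invoking \cref{th:wiener:decomposition:strongest-influence-gaussian} — and the asymptotic-orthogonality step, which genuinely relies on the closure of $\mathcal{A}(X_n)$ under $\Gamma$ and products and on the cancellation $\Esp*{H_l(X_n)}\to0$. The rest — the differentiation identities, the upgrade from $L^{2}$ to $L^{4}$ via hypercontractivity, and the uses of \cref{th:wiener:gamma-asymptotic-chaos} — is routine bookkeeping.
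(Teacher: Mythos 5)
Your argument is correct in substance, and for the influence claim it ultimately runs on the same engine as the paper's proof: differentiate the decomposition \cref{eq:wiener:decomposition:once} against a test direction $Z_{n}\in\mathcal{W}_{r}$ with $r<q$, kill $\Gamma[F_{n},Z_{n}]$ by minimality of $q$, kill the term carrying $\Gamma[X_{n},Z_{n}]$ by asymptotic normality of $X_{n}$ (i.e.\ $\rho_{q-1}(X_{n})\to 0$), and then isolate $\Gamma[A_{n,0},Z_{n}]$ from $\sum_{l\ge 1}H_{l}(X_{n})\Gamma[A_{n,l},Z_{n}]$ by an asymptotic-orthogonality argument. The differences are these. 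First, you wrap the argument in a contradiction and a case analysis on the degree of strongest influence of $A_{n,0}$, and you invoke \cref{th:wiener:decomposition:strongest-influence-gaussian} to make $Z_{n}$ asymptotically Gaussian; none of this is needed, since for any normalized $Z_{n}\in\mathcal{W}_{r}$ with $r<q$ you already have $\Gamma[X_{n},Z_{n}]\to 0$ (as you yourself note via \cref{th:wiener:gamma-asymptotic-chaos}), so one can simply take $Z_{n}$ realising $\rho_{r}(A_{n,0})$ directly, which is what the paper does. Second, your justification of the asymptotic orthogonality --- products of elements of $\mathcal{A}(X_{n})$ stay in $\mathcal{A}(X_{n})$ by \cref{th:independence:algebra}, then $\Esp*{H_{l}(X_{n})}\to 0$ kills the cross term with the $l=0$ block, and expanding the square of the vanishing sum keeps only that cross term --- is more detailed than the paper's terse appeal to ``$H_{l}(X_{n})$ asymptotically a chaos plus orthogonality of Wiener chaoses'', and is arguably the honest way to carry out that step; note only that \cref{th:wiener:asymptotic-independence} is stated for bounded test functions, so the moment factorizations you use require the routine upgrade via \cref{eq:wiener:norm-equivalence} together with $L^{2}$-boundedness of the $A_{n,l}$'s (a point the paper also leaves implicit). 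Third, you actually prove the norm estimate, which the paper's displayed proof does not address at all; your second-moment computation is the natural argument, but observe that it yields $\norm{A_{n,0}}_{L^{2}(\prob)}\le 1-c_{p,q}\delta^{2}+o(1)$ rather than the stated $1-c_{p,q}\delta$. This mismatch in the power of $\delta$ is harmless for the way the bound is used later (only a definite decrease depending on $p$, $q$, $\delta$ is ever needed), but you should state the bound you obtain rather than the literal one.
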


\begin{remark}
  Since $r < q$, we have by minimality of $q$ that $\rho_{r}(F_{n}) \to 0$.
Thus, this result states that if $\rho_{r}(F_{n}) \to 0$, then $\rho_{r}(A_{n,0}) \to 0$.
\end{remark}

\begin{proof}
  Take $(Z_{n})$ a sequence in $\mathcal{W}_{r}$.
  Taking the carré du champ in \cref{eq:wiener:decomposition:once} yields
  \begin{equation*}
    \Gamma(F_{n}, Z_{n}) = \sum_{l=1}^{\floor{\frac{p}{q}}} A_{n,l} H_{l}'(X_{n}) \Gamma(X_{n}, Z_{n}) + \sum_{l=1}^{\floor{\frac{p}{q}}} H_{l}(X_{n}) \Gamma(A_{n,l}, Z_{n}) + \Gamma(A_{n,0}, Z_{n}).
  \end{equation*}
  By minimality of $q$, the left-hand side vanishes as $n \to \infty$.
  Since $(X_{n})$ is asymptotically Gaussian by \cref{th:wiener:decomposition:strongest-influence-gaussian}, we find by \cref{th:wiener:normal-convergence} \cref{i:wiener:normal-convergence:influence-max} that the first term in the right-hand side also vanishes.
  Thus
  \begin{equation*}
    o_{L^{2}(\prob)}(1) = \sum_{l=1}^{\floor{\frac{p}{q}}} H_{l}(X_{n}) \Gamma(A_{n,l}, Z_{n}) + \Gamma(A_{n,0}, Z_{n}).
  \end{equation*}
  Using that by \cref{th:wiener:normal-convergence} \cref{i:wiener:normal-convergence:eigenvalue}, $H_{l}(X_{n})$ is asymptotically a chaos, and the orthogonality of Wiener chaos, this yields
  \begin{equation*}
    \Gamma(A_{n,l}, Z_{n}) \xrightarrow[n \to\infty]{L^{2}(\prob)} 0, \qquad l \in \set*{0,\dots, \floor{\frac{p}{q}} }.
  \end{equation*}
\end{proof}

We also establish that iterating our construction preserves the asymptotic independence.
\begin{lemma}\label{th:wiener:decomposition:independence}
Let $(R_{n})$ sequence in $\mathcal{W}_{p}$ that is not asymptotically Gaussian.
Let $(X_{n,1})$ an asymptotically Gaussian sequence in $\mathcal{W}_{q_{1}}$.
Let $(X_{n,2})$ a sequence in $\mathcal{W}_{q_{2}}$ a direction of strongest influence for $(R_{n})$.
Apply the decomposition \cref{eq:wiener:decomposition:once} to $R_{n}$ that is
\begin{equation*}
  R_{n} = \sum_{l=0}^{\floor*{\frac{p}{q_{2}}}} A_{n,l} H_{l}(X_{n,2}).
\end{equation*}
If $(R_{n}) \in \mathcal{A}(X_{n,1})$.
Then, $(X_{n,2})$ and $(A_{n,l}) \in \mathcal{A}(X_{n,1})$.
\end{lemma}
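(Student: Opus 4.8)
The plan is to prove the two assertions in turn: first that $(X_{n,2})\in\mathcal{A}(X_{n,1})$, i.e.\ $\Gamma[X_{n,2},X_{n,1}]\to0$ in $L^{2}(\prob)$, and then, granting this, that each $(A_{n,l})\in\mathcal{A}(X_{n,1})$, by feeding the chain rule into the decomposition $R_{n}=\sum_{l}A_{n,l}H_{l}(X_{n,2})$ exactly as in the proof of \cref{th:wiener:decomposition:influence}. Up to extraction I assume $\rho_{q_{2}}(R_{n})\ge\delta>0$, so that $\norm*{\Gamma[R_{n},X_{n,2}]}\ge\rho_{q_{2}}(R_{n})-o(1)$; recall that $X_{n,2}$ is asymptotically normal by \cref{th:wiener:decomposition:strongest-influence-gaussian} and that $(A_{n,l})\in\mathcal{A}(X_{n,2})$ with $A_{n,l}$ asymptotically a chaos of degree $p-lq_{2}$ and bounded in every $L^{p}(\prob)$ by \cref{th:wiener:decomposition:once}. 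Throughout I use the $L^{p}$-equivalence on fixed-degree chaoses (\cref{eq:wiener:norm-equivalence}), so that $\Gamma[\,\cdot\,,Z]$ and multiplication by $Z$ are bounded on such chaoses when $\norm*{Z}_{L^{2}(\prob)}$ is bounded.

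\textbf{Step 1: $(X_{n,2})\in\mathcal{A}(X_{n,1})$.} I argue by contradiction: assume $\norm*{\Gamma[X_{n,2},X_{n,1}]}\not\to0$. If $q_{1}>q_{2}$, \cref{th:wiener:gamma-asymptotic-chaos} applied to the pair $(X_{n,2},X_{n,1})$ gives $\Gamma[X_{n,2},X_{n,1}]\to0$ outright, so I may assume $q_{1}\le q_{2}$. I then decompose $X_{n,2}$ \enquote{along $X_{n,1}$}: when $q_{1}<q_{2}$, \cref{th:wiener:decomposition:once} (with $X_{n,2},X_{n,1}$ in the roles of $F_{n},X_{n}$) gives $X_{n,2}=\sum_{m=0}^{\floor*{q_{2}/q_{1}}}D_{n,m}H_{m}(X_{n,1})$ with $(D_{n,m})\in\mathcal{A}(X_{n,1})$ and $D_{n,m}$ asymptotically a chaos of degree $q_{2}-mq_{1}$; when $q_{1}=q_{2}$, I set $c_{n}\coloneq\Esp*{X_{n,1}X_{n,2}}$, $D_{n,1}\coloneq c_{n}$, $D_{n,0}\coloneq X_{n,2}-c_{n}X_{n,1}\in\mathcal{W}_{q_{2}}$, and note $(D_{n,0})\in\mathcal{A}(X_{n,1})$ since $\Gamma[X_{n,1},X_{n,2}]-q_{1}c_{n}\to0$ by \cref{th:wiener:gamma-asymptotic-chaos}. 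In either case $\norm*{D_{n,0}}\le1+o(1)$ by asymptotic independence and the orthogonality of the $H_{m}(X_{n,1})$. The decisive point is that $\Gamma[R_{n},D_{n,m}]\to0$ for every $m\ge1$: indeed $D_{n,m}$ is asymptotically in a chaos of degree $q_{2}-mq_{1}<q_{2}=q(R_{n})$, so this follows from the minimality of $q(R_{n})$ (together with the boundedness of $\rho_{k}(R_{n})$ and $\norm*{\mathsf{J}_{j}D_{n,m}}\to0$ for $j<q_{2}-mq_{1}$). Plugging this and $\Gamma[R_{n},X_{n,1}]\to0$ into the chain-rule expansion of $\Gamma[R_{n},X_{n,2}]=\sum_{m}\Gamma[R_{n},D_{n,m}H_{m}(X_{n,1})]$ annihilates every term except $m=0$, so $\norm*{\Gamma[R_{n},D_{n,0}]}\ge\rho_{q_{2}}(R_{n})-o(1)$; on the other hand $\mathsf{J}_{q_{2}}D_{n,0}/\norm*{\mathsf{J}_{q_{2}}D_{n,0}}$ is a competitor for $\rho_{q_{2}}(R_{n})$, so $\norm*{\Gamma[R_{n},D_{n,0}]}\le\norm*{D_{n,0}}\,\rho_{q_{2}}(R_{n})+o(1)$. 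Since $\rho_{q_{2}}(R_{n})\ge\delta$, comparing the two bounds forces $\norm*{D_{n,0}}\to1$, hence $\norm*{D_{n,m}}\to0$ for all $m\ge1$, hence $X_{n,2}=D_{n,0}+o(1)$ in $L^{2}(\prob)$; as $(D_{n,0})\in\mathcal{A}(X_{n,1})$ and $\Gamma[X_{n,1},\,\cdot\,]$ is bounded on fixed-degree chaoses, $\Gamma[X_{n,1},X_{n,2}]\to0$, the desired contradiction.

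\textbf{Step 2: $(A_{n,l})\in\mathcal{A}(X_{n,1})$.} Granting Step 1, this is routine. Applying the chain rule \cref{eq:wiener:chain-rule} to $R_{n}=\sum_{l}A_{n,l}H_{l}(X_{n,2})$ gives
\begin{equation*}
  \Gamma[R_{n},X_{n,1}]=\sum_{l\ge1}l\,A_{n,l}H_{l-1}(X_{n,2})\,\Gamma[X_{n,2},X_{n,1}]+\sum_{l\ge0}H_{l}(X_{n,2})\,\Gamma[A_{n,l},X_{n,1}].
\end{equation*}
The left-hand side vanishes by hypothesis and the first sum vanishes by Step 1 (bounded coefficients), so $\sum_{l}H_{l}(X_{n,2})\,\Gamma[A_{n,l},X_{n,1}]\to0$. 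Now $(A_{n,l})\in\mathcal{A}(X_{n,2})$ by \cref{th:wiener:decomposition:once} and $(X_{n,1})\in\mathcal{A}(X_{n,2})$ by Step 1 (symmetry of $\Gamma$, and $X_{n,2}$ asymptotically normal), so $\Gamma[A_{n,l},X_{n,1}]\in\mathcal{A}(X_{n,2})$ by \cref{th:independence:algebra}; hence, by (a vector version of) \cref{th:wiener:asymptotic-independence}, the family $(\Gamma[A_{n,l},X_{n,1}])_{l}$ is asymptotically independent of $X_{n,2}$. Expanding the square of the displayed $L^{2}(\prob)$-norm, using this asymptotic independence with the uniform integrability from \cref{eq:wiener:norm-equivalence} and the fact that $\Esp*{H_{l}(X_{n,2})H_{l'}(X_{n,2})}\to l!$ for $l=l'$ and to $0$ otherwise, yields $\sum_{l}l!\,\Esp*{\Gamma[A_{n,l},X_{n,1}]^{2}}\to0$; hence $\Gamma[A_{n,l},X_{n,1}]\to0$ for each $l$, i.e.\ $(A_{n,l})\in\mathcal{A}(X_{n,1})$.

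\textbf{Main obstacle.} The hard part is Step 1: ruling out that the optimal direction $X_{n,2}$ carries a non-negligible component along $X_{n,1}$. The argument isolates that putative component, observes it lives in a strictly lower chaos and is therefore invisible to $R_{n}$ by minimality of $q(R_{n})$, and then turns the optimality of $X_{n,2}$ against it; reconciling the two decompositions ($q_{1}<q_{2}$ via \cref{th:wiener:decomposition:once}, $q_{1}=q_{2}$ via orthogonal projection) with the normalisation built into $\rho_{q_{2}}$ is the one delicate bookkeeping point.
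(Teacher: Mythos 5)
Your proof is correct and, in the decisive case $q_{1}=q_{2}$ as well as in Step 2, it coincides with the paper's argument: there too one writes $X_{n,2}=c_{n}X_{n,1}+Z_{n}$ with $Z_{n}\perp X_{n,1}$, uses the hypothesis $(R_{n})\in\mathcal{A}(X_{n,1})$ to kill the $X_{n,1}$-component of $\Gamma[R_{n},X_{n,2}]$, and plays the near-optimality of $X_{n,2}$ for $\rho_{q_{2}}(R_{n})\ge\delta$ against the competitor bound $\sqrt{1-c_{n}^{2}}\,\rho_{q_{2}}(R_{n})$ to force $c_{n}\to0$; the coefficients $A_{n,l}$ are then handled exactly as you do, via the chain rule, \cref{th:independence:algebra}, and the asymptotic orthogonality of the $H_{l}(X_{n,2})$. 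The one place where you genuinely diverge is the subcase $q_{1}<q_{2}$ of Step 1: your detour through \cref{th:wiener:decomposition:once} (decomposing $X_{n,2}$ along $X_{n,1}$, controlling the $D_{n,m}$, comparing $\lVert D_{n,0}\rVert$ with $1$) is correct, but much heavier than needed, and it silently requires the $L^{2}$-boundedness and uniform-integrability bookkeeping for the $D_{n,m}$ that you only sketch. Since $X_{n,2}$ is itself asymptotically Gaussian by \cref{th:wiener:decomposition:strongest-influence-gaussian}, \cref{th:wiener:normal-convergence} \cref{i:wiener:normal-convergence:influence-max} together with the monotonicity \cref{th:directional-influence-monotone} give $\rho_{q_{1}}(X_{n,2})\le\rho_{q_{2}-1}(X_{n,2})\to0$ directly, hence $\Gamma[X_{n,2},X_{n,1}]\to0$ with no decomposition at all --- this is what the paper does for $q_{1}\ne q_{2}$, symmetric to your own one-line treatment of $q_{1}>q_{2}$ via \cref{th:wiener:gamma-asymptotic-chaos}, and it is the simplification you should adopt.
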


\begin{proof}
  If $q_{1} \ne q_{2}$ then by \cref{th:wiener:normal-convergence}, we find that $(X_{n,2}) \in \mathcal{A}(X_{n,1})$.
  If $q_{1} = q_{2}$, using orthogonal projections, write $X_{n,2} = a_{n} X_{n,1} + Z_{n}$, where $\Esp*{Z_{n} X_{n,1}} = 0$.
  Then,
  \begin{equation*}
    \Gamma(R_{n}, X_{n,2}) = a_{n} \Gamma(R_{n}, X_{n,1}) + \Gamma(R_{n}, Z_{n}),
  \end{equation*}
  where the first term on the right-hand side vanishes by assumption.
  Since $(Y_{n})$ is a directional of strongest influence, and that $Z_{n} \in \mathcal{W}_{q_{2}}$ with $\Esp*{Z_{n}^{2}} = 1 - a_{n}^{2}$, we find
  \begin{equation*}
    \rho_{q_{2}}(F_{n}) \leq \sqrt{1-a_{n}^{2}} \rho_{q_{2}}(F_{n}) + \frac{1}{n}.
  \end{equation*}
  Thus $a_{n} \to 0$ which implies that $\Gamma(X_{n,1}, X_{n,2}) \to 0$.

  Now let us handle, the $A_{n,l}$'s.
  Here, we have no assumption on $q_{1}$ and $q_{2}$.
  Write
  \begin{equation*}
    \Gamma(R_{n}, X_{n,1}) = \sum_{l} A_{n,l} \Gamma(H_{l}(X_{n,2}), X_{n,1}) + \sum_{l} H_{l}(X_{n,2}) \Gamma(A_{n,l}, X_{n,1}).
  \end{equation*}
  By assumption, the right-hand side vanishes as $n \to 0$.
  Also, since, we have shown that $\Gamma(X_{n,1}, X_{n,2})$ vanishes with $n \to \infty$, the first sum on the right-hand side also vanishes.
  Finally, by construction $(A_{n,l}) \in \mathcal{A}(X_{n,2})$, and form the previous point $(X_{n,1}) \in \mathcal{A}(X_{n,2})$.
  It follows that, by \cref{th:independence:algebra}, $\Gamma(A_{n,l}, X_{n,1}) \in \mathcal{A}(X_{n,1})$.
  Using that the Hermite polynomials are an orthonormal basis for the Gaussian measure and that $X_{n,2}$ is asymptotically Gaussian, we find
  \begin{equation*}
    o_{L^{2}(\prob)}(1) = \sum_{l} \Esp*{ \Gamma(A_{n,l}, X_{n,2})^{2} }.
  \end{equation*}
\end{proof}

We now describe the result of iterating the decomposition \cref{eq:wiener:decomposition:once}.
We define recursively, the random variables $(A_{n,k,l})$, $(X_{n,k})$, and $(R_{n,k})$ in the following way.
Set
\begin{align*}
  & \varepsilon_{n,1,} \coloneq (A_{n,0} - \mathsf{J}_{p} A_{n,0}), \qquad \text{and for $l > 1$}\ A_{n,1,l} \coloneq A_{n,l},
\\& X_{n,1} \coloneq X_{n},
\\& F_{n,1} \coloneq \sum_{l=1}^{\floor{\frac{p}{q}}} A_{n,1,l} H_{l}(X_{n}) + A_{n,1,0},
\\& R_{n,1} \coloneq \mathsf{J}_{p} A_{n,0}.
\end{align*}
Since $(A_{n,0})$ is asymptotically in $\mathcal{W}_{p}$, we have in fact that $\varepsilon_{n,1}$ is negligible when $n \to \infty$.
By construction, $R_{n,1} \in \mathcal{W}_{p}$ with $\Esp*{R_{n,1}^{2}} \leq 1 - c\delta$.
If $(R_{n,1})$ is asymptotically Gaussian, we stop; otherwise we apply the decomposition \cref{eq:wiener:decomposition:once} to $R_{n,1}$ which produces $A_{n,2,l}$, $X_{n,2}$, $F_{n,2}$ and $R_{n,2}$.
Repeating the procedure, we obtain the following result.

\begin{lemma}\label{th:decomposition:iteration}
  There exist an interval $K \subset \mathbb{N}$, possibly infinite, integers $(q_{k} : k \in K$), and for every $k \in K$ random sequences 
  \begin{itemize}
    \item $(R_{n,k})_{n}$ in $\mathcal{W}_{p}$, 
    \item $(X_{n,k})_{n}$ in $\mathcal{W}_{q_{k}}$ for some $q_{k}$,
    \item $(A_{n,k,l})_{n}$ in $\mathcal{W}_{\leq (p-lq_{k})}$ for $l \in \set*{1, \dots, \floor*{\frac{p}{q_{k}}} }$,
    \item $(\varepsilon_{n,k,})_{n}$ in $\mathcal{W}_{\leq (p-1)}$,
  \end{itemize}
  such that
  \begin{enumerate}[(i)]
    \item\label{i:decomposition:iteration:directions} $(X_{n,k+1})_{n}$ is a direction of strongest influence for $(R_{n,k})_{n}$, and $\Gamma(X_{n,k}, X_{n,k'}) \to 0$ for $k \ne k'$,
    \item\label{i:decomposition:iteration:asymptotic-independence} $(R_{n,k'})_{n}, (A_{n,k',l})_{n} \in \mathcal{A}((X_{n,k})_{n})$, for $k' < k$,
    \item\label{i:decomposition:iteration:negligible} $\varepsilon_{n,k} = o_{L^{2}(\prob)}(1)$,
    \item\label{i:decomposition:iteration:sum} we have $F_{n} = F_{n,1} + \dots + F_{n,k} + R_{n,k}$, where
      \begin{equation*}
        F_{n,k} \coloneq \sum_{l=1}^{\floor{\frac{p}{q_{i}}}} A_{n,k,l} H_{l}(X_{n,k}) + \varepsilon_{n,k,},
      \end{equation*}
      \item\label{i:decomposition:iteration:orthogonal} $(F_{n,1},\cdots,F_{n,k}, R_{n,k})_{n}$ forms an asymptotic orthogonal sequence.
  \end{enumerate}
\end{lemma}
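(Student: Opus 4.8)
The plan is to prove (i)--(v) simultaneously by induction on $k$, following the recursive construction described above. First, the recursion is well posed. With the convention $R_{n,0} \coloneq F_n$, at stage $k$ one is handed a sequence $(R_{n,k-1})$ in $\mathcal{W}_p$ that is $L^2$-bounded (by \cref{th:wiener:decomposition:influence}); if it is asymptotically Gaussian the procedure stops and $K$ is the set of stages reached so far, and otherwise \cref{th:wiener:decomposition:strongest-influence-gaussian} guarantees that any direction of strongest influence $(X_{n,k})$ for $(R_{n,k-1})$ is asymptotically normal, so \cref{th:wiener:decomposition:once} applies and produces $(A_{n,k,l})_{0 \le l \le \floor{p/q_k}}$, from which one defines $\varepsilon_{n,k} \coloneq A_{n,k,0} - \mathsf{J}_p A_{n,k,0}$, $R_{n,k} \coloneq \mathsf{J}_p A_{n,k,0}$ and $F_{n,k}$ as in the statement. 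With these definitions, (iv) amounts to summing the one-step identity $R_{n,k-1} = F_{n,k} + R_{n,k}$ (itself just the regrouping $A_{n,k,0} = \varepsilon_{n,k} + R_{n,k}$ inside the decomposition of $R_{n,k-1}$), and (iii) holds because the $l = 0$ instance of the second conclusion of \cref{th:wiener:decomposition:once}, namely $(\mathsf{L} + p)A_{n,k,0} \to 0$ in $L^2$, forces $\mathsf{J}_m A_{n,k,0} \to 0$ for every $m < p$ by orthogonality of the chaoses, i.e.\ $\varepsilon_{n,k} \to 0$.

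The content of (i) and (ii) is that asymptotic independence propagates along the iteration. The first conclusion of \cref{th:wiener:decomposition:once} gives $(A_{n,k,l}) \in \mathcal{A}(X_{n,k})$ for all $l$, hence $(R_{n,k}) = \mathsf{J}_p A_{n,k,0} \in \mathcal{A}(X_{n,k})$ because $\mathcal{A}(X_{n,k})$ is stable under projections by \cref{th:independence:algebra}. Assuming inductively that $(R_{n,k-1}) \in \mathcal{A}(X_{n,j})$ for every earlier stage $j$, and since $(X_{n,k})$ is a direction of strongest influence for $(R_{n,k-1})$ while $(X_{n,j})$ is asymptotically normal, \cref{th:wiener:decomposition:independence} --- applied with $(R_n) = (R_{n,k-1})$, $(X_{n,1}) = (X_{n,j})$, $(X_{n,2}) = (X_{n,k})$ --- yields both $(X_{n,k}) \in \mathcal{A}(X_{n,j})$, which is the orthogonality $\Gamma(X_{n,j}, X_{n,k}) \to 0$ in (i), and $(A_{n,k,l}) \in \mathcal{A}(X_{n,j})$ for all $l$; projecting again, $(R_{n,k}) \in \mathcal{A}(X_{n,j})$. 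So at stage $k$ every coefficient and remainder sits in $\mathcal{A}(X_{n,j})$ for every $j \le k$, which is (ii).

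The delicate point is (v). Write $F_{n,i} = \sum_{l \ge 1} A_{n,i,l} H_l(X_{n,i}) + \varepsilon_{n,i}$; since $\varepsilon_{n,i} \to 0$ in $L^2$ and all coefficients, all $H_l(X_{n,i})$ and all remainders are $L^2$-bounded uniformly in $n$ (from the construction and the hypercontractive bounds \cref{eq:wiener:norm-equivalence}), it suffices to show that $\Esp*{ A_{n,i,l} H_l(X_{n,i}) Z_n } \to 0$ for every $L^2$-bounded $(Z_n) \in \mathcal{A}(X_{n,i})$, and to apply this to $Z_n = R_{n,k}$ and, for $j > i$, to $Z_n = A_{n,j,l'} H_{l'}(X_{n,j})$ --- which does lie in $\mathcal{A}(X_{n,i})$ because $(A_{n,j,l'}),(X_{n,j}) \in \mathcal{A}(X_{n,i})$ by (i) and (ii), and $\mathcal{A}(X_{n,i})$ is an algebra stable under smooth composition. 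Given such $Z_n$, the product $A_{n,i,l} Z_n$ is again in $\mathcal{A}(X_{n,i})$, hence asymptotically independent of $X_{n,i}$ by \cref{th:wiener:asymptotic-independence}; since $A_{n,i,l}Z_n$ and $X_{n,i}$ are polynomials of bounded degree in $\vec{G}$, \cref{th:polynomials:convergence-moment} upgrades this to convergence of all mixed moments, so $\Esp*{ A_{n,i,l} H_l(X_{n,i}) Z_n } - \Esp*{ A_{n,i,l} Z_n } \Esp*{ H_l(X_{n,i}) } \to 0$; and $\Esp*{ H_l(X_{n,i}) } \to \Esp*{ H_l(N) } = 0$ for $l \ge 1$, $N \sim \mathcal{N}(0,1)$, again by \cref{th:polynomials:convergence-moment} and the asymptotic normality of $(X_{n,i})$. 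This yields $\Esp*{ F_{n,i} R_{n,k} } \to 0$ for $i \le k$ and $\Esp*{ F_{n,i} F_{n,j} } \to 0$ for distinct $i,j \le k$, the pairing $\Esp*{ F_{n,k} R_{n,k} }$ being treated identically, which is (v).

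The main obstacle is precisely this last paragraph: one must keep rigorous track of the fact that every object generated at a later stage of the iteration still belongs to the asymptotic independence algebra of every earlier direction, and then convert the \emph{a priori} bounded-test-function asymptotic independence of \cref{th:wiener:asymptotic-independence} into the convergence of the mixed moments that enter the orthogonality relations --- which is exactly where \cref{eq:wiener:norm-equivalence} and \cref{th:polynomials:convergence-moment} are indispensable. Controlling the stage-dependent bookkeeping (the integers $q_k$, the lower bounds on $\rho_{q_k}(R_{n,k-1})$, the $L^2$-bounds on the $A_{n,k,l}$) is routine but requires care.
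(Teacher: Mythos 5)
Your proposal is correct and follows essentially the same route as the paper: iterate \cref{th:wiener:decomposition:once} along directions of strongest influence (asymptotically normal by \cref{th:wiener:decomposition:strongest-influence-gaussian}), propagate membership in the algebras $\mathcal{A}(X_{n,k'})$ via \cref{th:wiener:decomposition:independence} and \cref{th:independence:algebra}, and deduce the asymptotic orthogonality \cref{i:decomposition:iteration:orthogonal} from $(A_{n,k',l}F_{n,k}) \in \mathcal{A}(X_{n,k'})$. Your treatment of \cref{i:decomposition:iteration:orthogonal} merely makes explicit the hypercontractivity/moment-convergence step that the paper leaves implicit when converting asymptotic independence into vanishing of the mixed expectations.
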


\begin{proof}
  As mentioned above we iterate \cref{eq:wiener:decomposition:once}.
  $K$ is simply given by the interval that contains all the $k$ until we stop the procedure, that is until $(R_{n,k})_{n}$ is asymptotically Gaussian, which might never happen in which case $K = \mathbb{N}$.
  From the construction and successive iteration of \cref{th:wiener:decomposition:independence}, we have already that \cref{i:decomposition:iteration:directions,i:decomposition:iteration:asymptotic-independence,i:decomposition:iteration:negligible,i:decomposition:iteration:sum} hold.
  Thus, only \cref{i:decomposition:iteration:orthogonal} remains to be proven.
  By \cref{th:wiener:decomposition:independence}, we have that for $k' < k$, $(F_{n,k}) \in \mathcal{A}(X_{n,k'})$.
  Thus
  \begin{equation*}
    \Esp*{ F_{n,k} F_{n,k'} } = \sum_{l=1}^{\floor*{\frac{p}{q_{k'}}}} \Esp*{A_{n,k',l} F_{n,k} H_{l}(X_{n,k'}) }.
  \end{equation*}
  The sum above vanishes since $(A_{n,k',l}F_{n,k}) \in \mathcal{A}(X_{n,k'})$.
  Finally, since $(R_{n,k}) \in \mathcal{A}(X_{n,k'})$, we also get that $\Esp*{F_{n,k'} R_{n,k}} \to 0$.
\end{proof}

We need a final ingredient for our proof by induction.
At this point, all the $F_{n,k}$ are decomposed in sum of chaoses of lower degrees that we could handle by induction.
However, $R_{n,k}$ is still in $\mathcal{W}_{p}$, to overcome this difficulty we show that it can be taken asymptotically Gaussian.
When $K$ is unbounded, for a fixed $k$, $(R_{n,k})$ might fail to be asymptotically Gaussian.
Our idea is to use that for $k_{n} \to \infty$ then we can restore the asymptotic normality.
The following result shows that up to taking an sufficiently slowly growing $(k_{n})$, we have that $(R_{n,k_{n}})$ is asymptotically Gaussian.
\begin{lemma}\label{th:wiener:decomposition:remainder-gaussian}
  Up to extraction, there exists a sequence of integers $(k_{n})$ such that $k_{n} \to \sup K$ and built upon \cref{th:analytical-lemma} such that we have that $(R_{n,k_{n}})$ is asymptotically Gaussian.
\end{lemma}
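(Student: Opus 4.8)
The plan is to split according to whether the procedure of \cref{th:decomposition:iteration} halts ($K$ finite) or runs forever ($K=\mathbb{N}$); in the latter case the slowly growing sequence $(k_{n})$ is produced by a diagonal argument based on \cref{th:analytical-lemma}. The whole statement I would actually prove, by downward induction on $\floor{p/2}-q(F_{n})$, is: for every sequence in $\mathcal{W}_{p}$ with $\Esp*{F_{n}^{2}}=1$, the remainder of the (if needed, re-run) construction can be made asymptotically Gaussian along some $k_{n}\to\sup K$. The case $K$ finite is immediate since the procedure then stopped precisely because $(R_{n,\sup K})$ was declared asymptotically Gaussian; so assume $K=\mathbb{N}$ from now on.

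First I would record two structural inputs. Write $q_{k}=q(R_{n,k-1})$ for the degree used at step $k$ and $(X_{n,k})$ for the realising direction. Applying \cref{th:wiener:decomposition:influence} to $R_{n,k-1}$, and recalling that $R_{n,k}=\mathsf{J}_{p}A_{n,k,0}$ with $\varepsilon_{n,k}=o_{L^{2}(\prob)}(1)$ supported in a fixed chaos, one gets $\rho_{r}(R_{n,k})\to 0$ for every $r<q_{k}$; hence $q_{k+1}=q(R_{n,k})\ge q_{k}$, and since $1\le q_{k}\le\floor{p/2}$ the sequence $(q_{k})$ is eventually constant, equal to some $q^{*}\le\floor{p/2}$ for $k\ge k_{0}$. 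Second, after a diagonal extraction over $k$ (legitimate, as all the quantities are bounded on bounded subsets of $\mathcal{W}_{p}$), set $\delta_{k}\coloneq\lim_{n}\rho_{q_{k}}(R_{n,k-1})$ and $\sigma_{k}^{2}\coloneq\lim_{n}\Esp*{R_{n,k}^{2}}$. Normalising $R_{n,k-1}$ to unit $L^{2}$-norm and using the bound $\|A_{n,k,0}\|\le\|R_{n,k-1}\|-c_{p,q_{k}}\delta_{k}+o(1)$ contained in \cref{th:wiener:decomposition:influence} together with $\|R_{n,k}\|\le\|A_{n,k,0}\|$, one obtains $\sigma_{k}\le\sigma_{k-1}-c\,\delta_{k}$ with $c\coloneq\min_{1\le q\le\floor{p/2}}c_{p,q}>0$; telescoping and $\sigma_{k}\ge 0$ give $\sum_{k}\delta_{k}\le c^{-1}<\infty$, so $\delta_{k}\to 0$.

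Now the diagonal extraction. For $k\ge k_{0}$ put $u_{n,k}\coloneq|\rho_{q^{*}}(R_{n,k})-\delta_{k+1}|$, and $u_{n,k}\coloneq 0$ otherwise; for each fixed $k$ we have $u_{n,k}\to 0$ as $n\to\infty$, so \cref{th:analytical-lemma} yields $k_{n}\to\infty$ with $\sum_{j\le k_{n}}u_{n,j}\to 0$. In particular $u_{n,k_{n}}\to 0$, and since $k_{n}\to\infty$ forces $\delta_{k_{n}+1}\to 0$, we conclude $\rho_{q^{*}}(R_{n,k_{n}})\to 0$. When $q^{*}=\floor{p/2}$ this is exactly criterion \cref{i:wiener:normal-convergence:influence-min} of \cref{th:wiener:normal-convergence}, so $(R_{n,k_{n}})$ is asymptotically Gaussian — if $\sigma_{n,k_{n}}\to 0$ this is trivial, otherwise pass to a subsequence on which $\sigma_{n,k_{n}}$ is bounded below, normalise, and apply the theorem — which settles the base case of the induction.

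The main obstacle is the case $q^{*}<\floor{p/2}$: then $\rho_{q^{*}}(R_{n,k_{n}})\to 0$ only annihilates the directional influence at the lowest non-trivial degree and does not by itself force asymptotic normality — \cref{rem-clt-3d} already exhibits sequences with $\rho_{q^{*}}\to 0$ whose limit is not Gaussian. I would close this using the induction hypothesis: if $(R_{n,k_{n}})$ is not already asymptotically Gaussian then its degree of strongest influence is some $q'>q^{*}$, so $\floor{p/2}-q(R_{n,k_{n}})<\floor{p/2}-q(F_{n})$, and we re-run the construction of \cref{th:decomposition:iteration} on $(R_{n,k_{n}})$; the inductive hypothesis produces a further slowly growing index $m_{n}$ along which the new remainder is asymptotically Gaussian, and composing the two decompositions and relabelling gives the claim. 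The delicate points here are that the working degree strictly increases at each re-run, so there are at most $\floor{p/2}$ of them, and that the asymptotic independence and orthogonality relations of \cref{th:decomposition:iteration}\cref{i:decomposition:iteration:asymptotic-independence,i:decomposition:iteration:orthogonal} are preserved when the decompositions are concatenated, which I would get from \cref{th:independence:algebra} and \cref{th:wiener:decomposition:independence}. This interaction between the diagonalisation and the forced increase of the strongest-influence degree is the genuinely subtle part of the argument.
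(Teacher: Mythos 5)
Your proposal is correct and follows the paper's proof in its skeleton: the finite-$K$ case is dismissed identically, the slowly growing diagonal $(k_{n})$ comes from \cref{th:analytical-lemma}, vanishing of the directional influence along the diagonal is converted into asymptotic normality via \cref{th:wiener:normal-convergence}, and the climb from the stabilised degree to $\floor{p/2}$ is done by finitely many repetitions. Two genuine differences are worth recording. First, the quantitative input: you obtain $\delta_{k}\to 0$ by telescoping the norm decrement $\norm{A_{n,0}}_{L^{2}(\prob)}\le 1-c_{p,q}\delta$ of \cref{th:wiener:decomposition:influence} (which requires the renormalisation care you indicate when $\sigma_{k-1}$ degenerates), whereas the paper does not use that decrement here: it bounds $\rho_{q_{k_{n}+1}}(R_{n,k_{n}})$ by $C\norm{F_{n,k_{n}+1}}_{L^{2}(\prob)}+o(1)$, using near-optimality of $X_{n,k_{n}+1}$ and the asymptotic independence of $R_{n,k_{n}+1}$, and gets $\norm{F_{n,k_{n}+1}}\to 0$ from the asymptotic orthogonality $\sum_{k}\sigma_{k}^{2}\le 1$ of \cref{th:decomposition:iteration}\,\cref{i:decomposition:iteration:orthogonal}; both routes are legitimate and rest on the same section's machinery. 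Second, the iteration: the paper compresses the passage from $s+1$ (your $q^{*}$) to $\floor{p/2}$ into the phrase ``applying this fact a finite number of times'', and your explicit downward induction --- re-running the construction on $(R_{n,k_{n}})$, concatenating the decompositions, and checking that \cref{i:decomposition:iteration:asymptotic-independence,i:decomposition:iteration:orthogonal} survive via \cref{th:independence:algebra,th:wiener:decomposition:independence} --- is exactly what that phrase has to mean, since a single diagonal pass only annihilates the influence at the stabilised degree and the continuation of the original construction, whose directions live at that same degree, cannot raise it further; so the re-run on the diagonal sequence and the accompanying relabelling of the remainder are unavoidable. You correctly identify this as the delicate point, and your treatment of it is more explicit than the paper's.
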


\begin{proof}
  There is only something to prove whenever $K$ is unbounded.
  Otherwise, any converging sequence $(k_{n})$ is eventually constant equals to $k^{*} \coloneq \max K$ and in this case $(R_{n,k^{*}})$ is asymptotically normal by construction.
  Thus, we assume that $K = \mathbb{N}$.
  First, by definition $\rho_{0}(F_{n}) = 0$ and $\rho_{q}(F_{n}) \geq \delta > 0$.
  By \cref{th:directional-influence-monotone}, we can take consider $s$ the largest integer such that $\rho_{s}(F_{n}) \to 0$.
  To conclude, it suffices to show that for any $(k_{n})$ converging to $\infty$, then $\rho_{s+1}(R_{n,k_{n}}) \to 0$.
  Thus, applying this fact a finite number of times, would yield that $\rho_{\floor{\frac{p}{2}}}(R_{n,k_{n}}) \to 0$, and we conclude by \cref{th:wiener:normal-convergence}.
To prove the claim, recall that for all $k \in \mathbb{N}$ we have constructed $(X_{n,k+1})$ in $\mathcal{W}_{q_{k+1}}$ that is a direction of strongest influence for $(R_{n,k})$.
Thus
\begin{equation*}
  \rho_{q_{k_{n}+1}}(R_{n,k_{n}}) \leq \frac{1}{n} + \norm*{ \Gamma(R_{n,k_{n}}, X_{n,k_{n}+1}) }_{L^{2}(\prob)} \leq \frac{1}{n} + C \norm{F_{n,k_{n}+1}}_{L^{2}(\prob)} + o_{L^{2}(\prob)}(1),
\end{equation*}
where we use that $R_{n,k_{n}} = F_{n,k_{n}+1} + R_{n,k_{n}+1}$, that $\Gamma(R_{n,k_{n}+1}, X_{n,k_{n}}) \to 0$, and that by equivalence of norms \cref{eq:wiener:norm-equivalence} and the asymptotic independence \cref{i:decomposition:iteration:asymptotic-independence} in \cref{th:decomposition:iteration} , $\Gamma(F_{n,k_{n}+1}, X_{n,k_{n}})$ has a norm comparable to that of $F_{n,k_{n}}$.
Up to extraction, we have that
\begin{equation*}
  \norm*{F_{n,k}}_{L^{2}(\prob)} \xrightarrow[n \to \infty]{} \sigma_{k}.
\end{equation*}
In view of the orthogonal property \cref{i:decomposition:iteration:orthogonal} in \cref{th:decomposition:iteration}, we find that $\sum_{k} \sigma_{k}^{2} \leq 1$.
Thus $\sigma_{k_{n}+1} \to 0$ for any $k_{n} \to \infty$, which shows that $\rho_{q_{k_{n}}}(R_{n,k_{n}}) \to 0$.
Moreover in view of \cref{th:analytical-lemma}, we have that $\norm{F_{n,k_{n}+1}}_{L^{2}(\prob)} \leq c \sigma_{k_{n}+1} + o(1)$.
Since by construction $k_{n} \geq s+1$, we conclude by \cref{th:directional-influence-monotone}.
\end{proof}

\subsection{Main result through an induction procedure}
To state properly our results, we consider convergence in law for infinite random vectors with Wiener chaotic components.
Here, we understand the convergence in the usual sense of convergence of all finite dimensional marginals, but with the additional requirement that the limit can be represented as an infinite random vector whose components belong to the Wiener space.

\begin{definition}\label{infinite-Wiener-convergence}
  Let $(\vec{F}_n) =(F_{n,1},\cdots,F_{n,i},\cdots)_{n\ge 1}$ such that for all $i\ge 1$, there exists $p_{i} \in \mathbb{N}$ such that $F_{n,i} \in \mathcal{W}_{p_i}$ for all $n \in \mathbb{N}$.
  We say that $(\vec{F}_n)_{n\ge 1}$ \emph{Wiener-converges in law} provided there exists $\vec{F}_{\infty} = (F_{\infty,1}, F_{\infty,2}, \dots)$ such that for all $n \in \mathbb{N}$ $F_{\infty,i} \in \mathcal{W}_{\leq p_{i}}$, and
\begin{equation*}
  \vec{F}_{n} \xrightarrow[n \to \infty]{\law} \vec{F}_{\infty}.
\end{equation*}
\end{definition}

\begin{remark}
  Our \cref{th:stabilite-wiener} can thus be rephrased by saying that convergence in law in equivalent to Wiener-convergence in law.
\end{remark}

Let us introduce the following definition that helps to formalize our proof.
\begin{definition}
  For $d \in \mathbb{N}$.
  We say that a sequence $(\vec{F}_{n})_{n}$ of infinite-random vector is \emph{Wiener admissible of degree $d$}, provided there exists $p \in \mathbb{N}$ and integers $p_{i} \leq p$ such that $F_{n,i} \in \mathcal{W}_{p_{i}}$, at least one of the following condition holds
  \begin{enumerate}[(i)]
    \item $(F_{n,i})$ is asymptotically Gaussian;
    \item or, $p_{i} \leq d$.
  \end{enumerate}
\end{definition}

We now complete the proof of \cref{th:stabilite-wiener} by proving the following result by induction.
\begin{theorem}
  Let $d \in \mathbb{N}$ and let $(\vec{F})_{n}$ be $d$-admissible.
  Assume that $(\vec{F}_{n})$ converges in law, then $(\vec{F}_{n})$ Wiener-converges in law.
  Moreover, notion $\vec{F}_{\infty}$ a vector realising the Wiener convergence in law, we have that $F_{\infty,i}$ is Gaussian if and only if $F_{\infty,i} \in \mathcal{W}_{1}$.
\end{theorem}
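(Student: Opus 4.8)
The plan is to prove the statement by induction on $d$, with the decomposition lemmas of this section doing the heavy lifting: at each step we peel off, from every coordinate of degree exactly $d$ that is not asymptotically Gaussian, a family of lower-degree polynomials together with an asymptotically Gaussian remainder, thereby exhibiting a $(d-1)$-admissible vector to which the inductive hypothesis applies. Throughout, \cref{th:wiener:convergence-in-law-moments} is used to pass moments and $L^{2}(\prob)$-orthogonality relations to the limit, and \cref{th:wasserstein-polynomial} to treat convergence in law as Wasserstein convergence.

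For the base case $d=1$ (which subsumes $d=0$), every coordinate $(F_{n,i})$ is either asymptotically Gaussian or lies in $\mathcal{W}_{\leq 1}$, hence converges to a centered Gaussian; the covariances $\Esp*{F_{n,i}F_{n,j}}$ converge to some $c_{ij}$ by \cref{th:wiener:convergence-in-law-moments}, one constructs a sequence $\vec{N}=(N_i)$ in $\mathcal{W}_1$ realising these covariances by the Gram--Schmidt type recursion sketched in \cref{s:proof-wiener}, and \cite{PeccatiTudor} gives $\vec{F}_n \to \vec{N}$ in law. Since each $N_i \in \mathcal{W}_1 \subseteq \mathcal{W}_{\leq p_i}$ and is Gaussian, the base case holds, including the ``moreover''.

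For the induction step, assume the result at level $d-1$ and let $(\vec{F}_n)$ be $d$-admissible and convergent in law. I would split the indices: coordinates that are asymptotically Gaussian, or have $p_i\leq d-1$, are kept untouched; each coordinate with $p_i=d$ that is \emph{not} asymptotically Gaussian is expanded, after an extraction and a choice $k_n\to\sup K$ as in \cref{th:wiener:decomposition:remainder-gaussian}, using \cref{th:decomposition:iteration}:
\begin{equation*}
  F_{n,i} = \sum_{k=1}^{k_n} F_{n,i,k} + R_{n,i,k_n}, \qquad F_{n,i,k} = \sum_{l=1}^{\floor*{\tfrac{p}{q_k}}} A_{n,i,k,l}\, H_l(X_{n,i,k}) + \varepsilon_{n,i,k},
\end{equation*}
with $(R_{n,i,k_n})$ and all $(X_{n,i,k})$ asymptotically Gaussian, $A_{n,i,k,l}\in\mathcal{W}_{\leq (p-lq_k)}$, and $\varepsilon_{n,i,k}=o_{L^{2}(\prob)}(1)$. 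Then I assemble one big infinite vector out of the kept coordinates, the chaotic pieces $\mathsf{J}_m A_{n,i,k,l}$, the $X_{n,i,k}$ and the $R_{n,i,k_n}$, discarding along a further extraction those pieces whose variance tends to $0$ and normalising the rest. Since $l\geq 1$ and $q_k\geq 1$ force every $\mathsf{J}_m A_{n,i,k,l}$ into a chaos of degree $\leq d-1$, since the $X$'s and $R$'s are asymptotically Gaussian, and since the whole vector is bounded in $L^{2}(\prob)$ (by \cref{th:wiener:decomposition:influence} and the asymptotic orthogonality \cref{i:decomposition:iteration:orthogonal} of \cref{th:decomposition:iteration}), this vector is $(d-1)$-admissible and, up to one last extraction and \cref{th:wiener:convergence-in-law-moments}, converges in law. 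The inductive hypothesis then furnishes a Wiener limit: coefficients $A_{\infty,i,k,l}\in\mathcal{W}_{\leq (p-lq_k)}$, directions $X_{\infty,i,k}\in\mathcal{W}_1$ and remainders $R_{\infty,i}\in\mathcal{W}_1$ (the last two in $\mathcal{W}_1$ by the ``moreover'' applied to asymptotically Gaussian sequences), together with limits $F_{\infty,i}$ of the kept coordinates, Gaussian ones lying in $\mathcal{W}_1$.

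It remains to build $F_{\infty,i}$ for the decomposed coordinates and to identify the law. I would set $F_{\infty,i,k}\coloneq\sum_l A_{\infty,i,k,l} H_l(X_{\infty,i,k})\in\mathcal{W}_{\leq p}$; passing the asymptotic orthogonality of $(F_{n,i,k})_k$ to the limit gives $\sum_k\Esp*{F_{\infty,i,k}^2}\leq 1$, so $\sum_k F_{\infty,i,k}$ converges in $L^{2}(\prob)$ and, $\mathcal{W}_{\leq p}$ being closed, $F_{\infty,i}\coloneq\sum_k F_{\infty,i,k}+R_{\infty,i}\in\mathcal{W}_{\leq d}$. \emph{The main obstacle is the exchange of limits} needed to show that $(\vec{F}_n)$ actually converges to $\vec{F}_\infty\coloneq(F_{\infty,i})$ (hence to the prescribed limit, which closes all the extractions): one must control, uniformly in $n$ and in $L^{2}(\prob)$, the tails $\sum_{k>K_\varepsilon}F_{n,i,k}$ using asymptotic orthogonality and $\sum_k\Esp*{F_{n,i,k}^2}\leq 1+o(1)$, let $k_n$ grow slowly enough via \cref{th:analytical-lemma}, and conclude with the Wasserstein metrization of \cref{th:wasserstein-polynomial}, exactly the diagonal argument carried out for $d\in\{1,2,3\}$ in \cref{s:proof-wiener}; making $K_\varepsilon$, $k_n$ and $n$ interact correctly across the infinitely many coordinates is the delicate point. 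Finally, the ``moreover'' is immediate in one direction since $\mathcal{W}_1$ consists of Gaussians, and in the other since a Gaussian $F_{\infty,i}$ forces $(F_{n,i})$ to be asymptotically Gaussian, hence to have been kept and treated by the inductive hypothesis, which places $F_{\infty,i}$ in $\mathcal{W}_1$.
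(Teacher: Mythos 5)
Your proposal is correct and follows essentially the same route as the paper: induction on $d$ with the $d=1$ base case via Peccati--Tudor, the induction step via \cref{th:decomposition:iteration} together with \cref{th:wiener:decomposition:remainder-gaussian} and a slowly growing $k_n$ from \cref{th:analytical-lemma}, assembly of a lower-degree admissible vector handled by the inductive hypothesis, reconstruction of the limit as an $L^2(\prob)$-convergent series in the closed space $\mathcal{W}_{\leq d}$, and the final truncation/Wasserstein diagonal argument borrowed from the $d\in\{1,2,3\}$ case. Your minor deviations --- keeping the coordinates of degree at most $d-1$ untouched rather than decomposing every non-Gaussian coordinate, and inserting the chaotic projections $\mathsf{J}_m A_{n,i,k,l}$ so that the assembled vector literally satisfies the admissibility definition --- are harmless refinements of the paper's argument, and the limit-exchange step you flag as delicate is treated at exactly the same level of detail in the paper, which also defers to the truncation argument of \cref{th:wiener:stabilite:outline}.
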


\begin{proof} 
  We work by induction on $d$.
  We have already proved the initialisation, that is $d=1$ (actually also $d=2$ and $d=3$) in \cref{th:wiener:stabilite:outline}.
Take $d \ge 2$, and assume the claim is proved for all $d' < d$.
Let $(\vec{F}_n)$ be $d$-admissible and converging in distribution, without loss of generality we further assume that $\Esp*{F_{n,i}^2}=1$ for $i$ and $n$.
Let us define $I \coloneq \{i\ge 1\,|\, F_{n,i}\not\to\mathcal{N}(0,1)\}$.
The indices not in $I$ are covered immediately by the induction hypothesis and their limits are in the first Wiener chaos.
Now for each $i\in I$ one apply our decomposition lemma \cref{th:decomposition:iteration}to $F_{n,i}$ and we obtain that there exists a $k^{*}_{i} \in \mathbb{N} \cap \{\infty\}$ such that for all $i \in \mathbb{N}^{*}$ and all $k \in \mathbb{N}$ with $k \leq k_{i}^{*}$
\begin{equation*}
  F_{n,i} = F_{n,i,1} + \dots + F_{n,i,k} + R_{n,i,k},
\end{equation*}
where $F_{n,i,k} =\sum_{l\ge 0} A_{n,i,k,l}H_{l}(X_{n,i,k})$.
Let us write $q_{i,k}$ for the integer such that $X_{n,i,k} \in \mathcal{W}_{q_{i,k}}$, and for short let $l_{i,k} \coloneq \floor{\frac{p}{q_{i,k}}}$.
Up to extraction, there exists real numbers $(\sigma_{i,k})$ such that $\norm{F_{n,i,k}}_{L^{2}(\prob)} \to \sigma_{i,k}$.
By \cref{th:analytical-lemma}, we can find $k_{n,i} \to k_{i}^{*}$, such that
\begin{equation*}
  \sum_{k=1}^{r_{n,i}} \abs*{ \Esp*{ F_{n,i,k}^{2} } - \sigma_{i,k}^{2} } \xrightarrow[n \to \infty]{} 0, \qquad i \in \mathbb{N}.
\end{equation*}
Consider the vectors
\begin{equation*}
  \vec{V}_{n} \coloneq \paren*{A_{n,i,k,l},X_{n,i,k},R_{n,i,k_{n,i}} : i \in \mathbb{N}^{*}, l \in \set*{1, \dots,l_{i,k} }}, \qquad n \in \mathbb{N}.
\end{equation*}
By \cref{th:wiener:decomposition:remainder-gaussian}, $(\vec{V}_{n})$ is admissible.
Thus, by the induction hypothesis, up to extraction, there exists
\begin{equation*}
  \vec{V}_{\infty} \coloneq \paren*{A_{\infty,i,k,l},X_{\infty,i,k},R_{\infty,i} : i \in \mathbb{N}^{*}, l \in \set*{ 1, \dots, l_{i,k} } },
\end{equation*}
with $A_{\infty,i,k,l} \in \mathcal{W}_{\leq (p-lq_{k,i})}$ and $R_{\infty,i}$ and $X_{\infty,i,k} \in \mathcal{W}_{1}$, such that
\begin{equation*}
  \vec{V}_{n} \xrightarrow[n \to \infty]{\law} \vec{V}_{\infty}.
\end{equation*}
Define further
\begin{equation*}
  F_{\infty,i,k} \coloneq \sum_{l=1}^{l_{i,k}} A_{\infty,i,k,l} H_{l}(X_{\infty,i,k}).
\end{equation*}
Moreover, since for $i \in \mathbb{N}^{*}$, $(F_{\infty,i,k})_{k]}$ are orthogonal by \cref{th:decomposition:iteration} \cref{i:decomposition:iteration:orthogonal}, we find that the $(F_{\infty,i,k})_{k}$ are also orthogonal, in particular the series
\begin{equation*}
  S_{\infty,i} \coloneq \sum_{k \in \mathbb{N}} F_{\infty,k,i},
\end{equation*}
exists in $L^{2}(\prob)$, and is an element of $\mathcal{W}_{\leq p_{i}}$ since the later is closed in $L^{2}(\prob)$.
Define $F_{\infty,i} \coloneq F_{\infty,i} + R_{\infty,i} \in \mathcal{W}_{\leq p_{i}}$.
We claim that
\begin{equation*}
  \vec{F}_{n} \xrightarrow[n \to \infty]{\law} \vec{F}_{\infty},
\end{equation*}
which would conclude the proof.
This is not completely immediate since $S_{\infty,i}$ is an infinite series.
However, we can use again the properties of $(k_{n})$ from \cref{th:analytical-lemma} as in the proof of \cref{th:wiener:stabilite:outline} to truncate the series and conclude for the convergence.
\end{proof}

We conclude this section with the proof of \cref{th:super-lemme}.
\begin{proof}[Proof of {\cref{th:super-lemme}}]
We proceed by induction on $p-s$. When $s=p-1$, the lemma \ref{th:wiener:normal-convergence} ensures a Gaussian limit for $F_n$ hence $Q$ must be linear in this case. Indeed $\left\lfloor\frac{p}{s+1}\right\rfloor=1$ which is consistent.
\medskip
As established above, for any sequence $(F_n)_{n\ge 1}$ in $\mathcal{W}_p$, one may write for any $r\ge 1$ that $F_n=\sum_{i=1}^{r} F_{n,r}+R_{n,r},$ where 
\begin{itemize}
\item \(F_{n,i}=\sum_{k=1}^{\lfloor p/q_i\rfloor}A_{k,n}^{(i)} H_k(X_{n,i})\),
\item $X_{n,i}\xrightarrow[n\to\infty]~\mathcal{N}(0,1)$ and $\|\Gamma\left[R_{n,i-1},X_{n,i}\right]\|_2\ge \rho_{q_i}(R_{n,i})-1/n$.
\end{itemize}
If one further assume that $\rho_s(F_n)\to 0$, for some $s\ge 1$, then one gets that $q_i\ge s+1$ in the above decomposition. Indeed, recall that $q_i\coloneq\min(k\ge 1|\rho_k(R_{n,i-1})=\rho_k(A_{0,n}^{(i-1)})\not\to 0)$ and that thanks to \cref{th:wiener:decomposition:remainder-gaussian} we get that $\rho_s(A_{k,n}^{(i)})\to 0$ for each $k\in\llbracket 0, \frac{p}{q_i}\rrbracket$ and each $i\ge 1$.  Besides, relying on the proof of our main Theorem, we may assert that for some suitable $r_n\to\infty$ we have $\rho_{s+1}\left(R_{n,r_n}\right)\to 0$ as well as (up to extracting subsequences)
\begin{itemize}
\item $F_{n,i}\to F_{\infty,i}\coloneq\sum_{k=1}^{\lfloor p/q_i\rfloor}A_{k,\infty}^{(i)}H_k(X_{\infty,i})$,
\item $R_{n,r_n}\to R_\infty$,
\item $F_n\to \sum_{i=1}^\infty F_{i,\infty}+R_{\infty}$ (the series is converging in $L^2$)
\end{itemize}
with all convergences being joint. By our induction hypothesis we may already infer that $R_\infty$ may be represented by a polynomial of degree less than $\lfloor p/(s+2)\rfloor$. Moreover, $X_{\infty,i}$ is an element of the first chaos and $H_k(X_{\infty,i})$ is then a degree $k$-polynomial. Besides, for each $i\ge 1$ and each $k\in\llbracket 1, p/q_i\rrbracket$, $A_{k,n}^{(i)}$ is asymptotically in $\mathcal{W}_{p-k q_i}$ with $\rho_s(A_{k,n}^{(i)})\to 0$. If $s\ge p-k q_i$ then we obtain that $A_{k,n}^{(i)}$ is asymptotically independent of a chaos of higher order  which entails that $A_{k,n}^{(i)}\to 0$. When $p-k q_i \ge s+1$, since $1\le p-k q_i-s <p-s$, we may use our induction hypothesis and $A_{k,\infty}^{(i)}$ may be represented by a polynomial of degree $\left\lfloor \frac{p-kq_i}{s+1}\right\rfloor$. Hence, $F_{i,\infty}$ is a polynomial of degree less than 
$$\max_{k\in\llbracket 1, \lfloor (p-s-1)/q_i\rfloor\rrbracket} \left\lfloor \frac{p-kq_i}{s+1}\right\rfloor +k\le \max_{k\in\llbracket 1, \lfloor (p-s-1)/q_i\rfloor\rrbracket} \left\lfloor \frac{p-k(s+1)}{s+1}\right\rfloor +k =\left\lfloor \frac{p}{s+1}\right\rfloor,$$
where the last inequality uses that $q_i\ge s+1$ and that $x\mapsto\lfloor x\rfloor$ is monotonic. The final argument uses the fact that the series $\sum_{i\ge 1} F_{\infty,i}$ is converging in $L^2$ and the fact $\mathcal{W}_{\le m}$ is a closed space for any $m\ge 0$.
\end{proof}

\section{From stability of Wiener chaoses to stability of arbitrary polynomial chaoses}

\begin{theorem}\label{th:stabilite-general-multidim}
  Let $(P_{i,n})_{i,n}$ be polynomials of degree at most $d \in \mathbb{N}$; let $\vec{X}$ be a vector of independent and identically random variables, that are centered, with unit variance, and admitting moments at every order; and let $\vec{F}_{n}$ be the infinite random vector given by
  \begin{equation*}
    F_{i,n} \coloneq P_{i,n}(\vec{X}), \qquad i,n \in \mathbb{N}.
\end{equation*}
Assume that $(\vec{F}_{n})$ converges in law.
Then, there exist polynomials $(Q_{i})_{i}$ of degree at most $d$ and an infinite random vector $\vec{Y}$ with independent entries such that $\Law{Y_{i}} \in \set*{ \Law{X_{1}}, \mathcal{N}(0,1)}$ and
\begin{equation*}
  \vec{F}_{n} \xrightarrow[n \to \infty]{\law} (Q_{i}(\vec{Y}))_{i}.
\end{equation*}
\end{theorem}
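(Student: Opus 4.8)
The plan is to reduce \cref{th:stabilite-general-multidim} to the Gaussian case \cref{th:stabilite-wiener-multidim} by means of the quantitative invariance principle of \cite{MOO}, after isolating the coordinates on which the $F_{i,n}$ depend macroscopically. We may assume $\Esp*{F_{i,n}^{2}} = 1$ (a coordinate of vanishing variance being deterministic), and, since we only need to identify the limiting law $\mu$, we are free to pass to subsequences. Let $(h_{k})_{k \ge 0}$ be the orthonormal polynomials of $\Law*{X_{1}}$, so $h_{0} = 1$ and $h_{1} = \mathrm{id}$. Expanding $P_{i,n}(\vec{X}) = \sum_{\kappa} \widehat{P}_{i,n}(\kappa) \prod_{l} h_{\kappa_{l}}(X_{l})$ realises each $F_{i,n}$ as a multilinear polynomial of degree at most $d$ over the independent, $\mathbb{R}^{d+1}$-valued ensemble $\bigl( h_{0}(X_{l}), \dots, h_{d}(X_{l}) \bigr)_{l}$, with influences $\operatorname{Inf}_{l}(P_{i,n}) \coloneq \sum_{\kappa \colon \kappa_{l} > 0} \widehat{P}_{i,n}(\kappa)^{2}$ summing to at most $d$ over $l$.

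First I would peel off the influential coordinates. For a threshold $\tau > 0$ at most $d/\tau$ of the $\operatorname{Inf}_{l}(P_{i,n})$ exceed $\tau$; since permuting the (i.i.d.) variables does not change any joint law, one may, stage by stage, relabel the macroscopic coordinates into a fixed set, and a diagonal extraction then singles out a countable set $\mathcal{I} \subset \mathbb{N}$ of indices carrying all the non-vanishing influence (including influence that migrates to infinity along the sequence). Grouping the expansion according to the orthonormal polynomials in the variables $(X_{l})_{l \in \mathcal{I}}$ yields a decomposition
\begin{equation*}
  P_{i,n}(\vec{X}) = \sum_{\beta} \Bigl( \prod_{l \in \mathcal{I}} h_{\beta_{l}}(X_{l}) \Bigr) R_{i,n,\beta}\bigl( (X_{l})_{l \notin \mathcal{I}} \bigr),
\end{equation*}
where $R_{i,n,\beta}$ is multilinear of degree at most $d - \abs{\beta}$, has maximal influence tending to $0$ as $n \to \infty$, and $\sum_{\beta} \norm{R_{i,n,\beta}}_{L^{2}(\prob)}^{2} = \norm{P_{i,n}}_{L^{2}(\prob)}^{2}$ by orthonormality; the entries $(X_{l})_{l \in \mathcal{I}}$ are precisely the coordinates of $\vec{Y}$ that will carry the law of $X_{1}$.

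Because $(X_{l})_{l \in \mathcal{I}}$ is independent of $(X_{l})_{l \notin \mathcal{I}}$ and of a fresh Gaussian sequence $\vec{G}$, the vector-valued quantitative invariance principle of \cite{MOO}, applied to the uniformly $L^{2}$-bounded, degree $\le d$, vanishing-max-influence family $(R_{i,n,\beta})$, shows that replacing $(X_{l})_{l \notin \mathcal{I}}$ by $\vec{G}$ (equivalently, reading the coefficients off in the Hermite basis) does not affect the limiting law; the uniform $L^{p}$-bounds afforded by hypercontractivity (cf.\ \cref{th:polynomials:convergence-moment}) upgrade the resulting smoothed estimate to the Wasserstein distance. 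The family $\bigl( R_{i,n,\beta}(\vec{G}) \bigr)$ is then bounded in $L^{2}(\prob)$, hence relatively compact for convergence in law with all moments, and \cref{th:stabilite-wiener-multidim} — in the degree-preserving, entrywise form established in its proof — provides, after a further extraction, a joint limit $\bigl( Q_{i,\beta}(\vec{G}') \bigr)$ with $\deg Q_{i,\beta} \le d - \abs{\beta}$; this limit is joint with $(X_{l})_{l \in \mathcal{I}}$ since independence passes to the weak limit. Reassembling, $\mu$ is the law of $\bigl( \sum_{\beta} \prod_{l \in \mathcal{I}} h_{\beta_{l}}(X_{l}) \, Q_{i,\beta}(\vec{G}') \bigr)_{i}$, whose argument is an independent vector with entries distributed as $X_{1}$ or as $\mathcal{N}(0,1)$ and whose components are polynomials of degree at most $d$ — exactly the asserted form.

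\textbf{The main obstacle} is the peeling step. Unlike for a single polynomial, the number of influential coordinates is not bounded uniformly over the infinitely many output coordinates, and influence may concentrate on indices drifting to infinity, so extracting one countable ``$X$-part'' that simultaneously works for every $F_{i,n}$ demands a careful relabelling and diagonalisation reconciling the finite-marginal constructions — in the same spirit as, but heavier than, the series-truncation arguments already used in the Gaussian case (see the proof of \cref{th:wiener:stabilite:outline}). A secondary subtlety is to keep the degree bookkeeping tight through the reassembly, which is why the entrywise, degree-sensitive version of \cref{th:stabilite-wiener-multidim} is needed rather than its plain statement.
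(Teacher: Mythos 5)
Your overall toolkit (multilinear reduction over orthonormal ensembles, the invariance principle of \cite{MOO}, an appeal to \cref{th:stabilite-wiener-multidim}, truncation in Wasserstein distance) is the right one, but the step you defer as ``the main obstacle'' is the heart of the proof, and as you have set it up it fails rather than merely being technical. First, for a fixed component $i$ the $(p+1)$-st largest influence of $P_{i,n}$ is only bounded by $d/(p+1)$; to make the maximal influence outside the retained block vanish you must peel a number of coordinates growing with $n$, so no \emph{fixed} set $\mathcal{I}$ (even after one global relabelling per $n$, which is all you may use since the same permutation must be applied to every component to preserve the joint law) can make every $R_{i,n,\beta}$ have maximal influence tending to $0$: that claim is unjustified as stated. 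Second, even granting a relabelling that keeps the influential coordinates inside $\mathcal{I}$, they can still drift \emph{within} $\mathcal{I}$, and your identification of the limit takes limits of the coefficients $R_{i,n,\beta}$ at fixed $\beta$; since your decomposition has no remainder term, mass carried by monomials whose $\mathcal{I}$-support moves off to infinity (e.g.\ variables influential for some other component $i'$ but of small, slowly decaying influence for $i$) is simply lost, so the reassembled law need not equal $\mu$. Third, the statement concerns the joint law of the whole vector, and pinning the influential variables of infinitely many components simultaneously requires first deciding which influential variables of different components coincide asymptotically; this coincidence analysis (the array $C_{n}((i,j),(\tilde\imath,\tilde\jmath))$, its limit and the resulting equivalence classes in the paper's initialisation) is entirely absent and cannot be absorbed into ``diagonalisation''.

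These are precisely the difficulties the paper's proof is organised to avoid: it argues by induction on the degree $d$ rather than by one global split of the variables. For each component it orders the influences with a permutation $\sigma_{n,i}$ \emph{depending on $i$}, peels the top $p_{n}$ variables one at a time, writing $F_{n,i}=Z_{\sigma_{n,i}(1)}R_{n,i,1}+\dots+Z_{\sigma_{n,i}(p_{n})}R_{n,i,p_{n}}+S_{n,i,p_{n}}$ with the uniform bound $\tau(S_{n,i,p})=O(1/p)$, and keeps the peeled factors $Z_{\sigma_{n,i}(j)}$, the lower-complexity coefficients $R_{n,i,j}$ and the low-influence remainder $S_{n,i,p_{n}}$ as separate entries of a $d$-admissible vector, whose joint limit is identified by the induction hypothesis; the base case $d=1$ carries the coincidence analysis and a hybrid (replace-the-finitely-many-relevant-variables) argument for asymptotic independence, and the final reassembly uses the same truncation/exchange-of-limits scheme as in the Gaussian case. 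To repair your single-invariance-step architecture you would need to reintroduce exactly these ingredients — a growing peeling with an explicit remainder, per-component permutations reconciled through the coincidence classes, and a tail control for the sum over $\beta$ — at which point you have essentially reconstructed the paper's induction.
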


\subsection{Reduction to multilinear polynomials}\label{s:reduction-multilinear}
We build upon an invariance results from \cite{MOO}.
Let us introduce some notions taken from there.
In view of our assumptions, there exist polynomials $T_{0} \coloneq 1,T_{1} \coloneq x,T_{2} \coloneq x^{2}-1, \dots, T_{d}$, such that the random variables $T_{0}X_{1}, T_{1}X_{1}, \dots, T_{d}X_{1}$ are orthonormal.
Let us consider the \emph{orthonormal ensembles}
\begin{equation*}
  \mathscr{X}_{j} \coloneq \set*{ T_{0}X_{j}, T_{1}X_{j}, \dots, T_{d}X_{j}}.
\end{equation*}
Then the $F_{n,i}$'s are \emph{multilinear polynomials} over these ensembles, namely
\begin{equation*}
  F_{n,i} = \sum_{\alpha \in \{1, \dots, d\}^{\mathbb{N}}} a_{n,i,\alpha} \prod_{j \in \mathbb{N}} \mathscr{X}_{j,\alpha_{j}},
\end{equation*}
for suitably chosen real coefficients $(a_{n,i,\alpha})$.
Since all the $\mathscr{X}_{j}$'s actually contains the same number of variables, it is sufficient for our purpose to work with elementary multilinear polynomials of the form
\begin{equation}\label{eq:definition-multlinear}
  \sum_{J \subset \mathbb{N}} a_{J} Z_{J},
\end{equation}
where the $a_{J}$'s are real coefficients and $Z_{J} \coloneq \prod_{j \in J} Z_{j}$, with $Z_{j} \in \mathscr{X}_{j}$.

\subsection{Proof by induction}
\begin{definition}
  We say that a sequence of vectors $(\vec{F}_{n})$ is $d$-admissible provided that there exists $K \in \mathbb{N}$ such that for all $i$ and $n \in \mathbb{N}$, $F_{n,i}$ is a multilinear polynomial as in \cref{eq:definition-multlinear} with degree at most $K$, and
  \begin{enumerate}[(i)]
    \item either $\tau(F_{n,i}) \to 0$, as $n \to \infty$,
    \item or $\mathrm{deg}(F_{n,i}) \leq d$.
  \end{enumerate}
\end{definition}

\begin{definition}[Induction hypothesis]
  For all $d \in \mathbb{N}$, we write $\mathsf{H}(d)$ to indicate that the conclusion of \cref{th:stabilite-general-multidim} holds provided $(\vec{F}_{n})$ is a $d$-admissible sequence.
\end{definition}
In view of \cref{s:reduction-multilinear}, establishing $\mathsf{H}(d)$ for all $d \in \mathbb{N}$ proves \cref{th:stabilite-general-multidim}.

\subsection{Initialisation}
Take a $1$-admissible sequence $(\vec{F}_{n})$.
This means that for $n$ and $i \in \mathbb{N}$ either $F_{n,i}$ is linear, either $\tau(F_{n,i}) \to 0$ as $n \to \infty$.
\subsubsection{Decomposition of the linear terms}
Let us write $I$ for the set of indices $i$ such that for all $n \in \mathbb{N}$ $F_{n,i}$ is linear.
Since we can discard the constants that do not play a role in the argument, we can write for all $i \in I$,
\begin{equation*}
  F_{n,i} = \sum_{j \in \mathbb{N}} a_{n,i,j} Z_{j},
\end{equation*}
for some real numbers $a_{n,i,j}$.
Choose a permutation $\sigma_{n,i}$ that order the $(a_{n,i,j})_{j}$ in module, namely we require that
\begin{equation*}
  |a_{n,i,\sigma_{n,i}(1)}| \geq |a_{n,i,\sigma_{n,i}(2)}| \geq ...
\end{equation*}
Without loss of generality, we assume that $\sigma_{n,1} = \mathrm{id}$.
Up to extracting non-relabelled subsequences, we have that
\begin{equation*}
  a_{n,i,\sigma_{n,i}(j)} \xrightarrow[n \to \infty]{} a_{\infty,i,j},
\end{equation*}
for some real numbers $a_{\infty,i,j}$.
Since the elements of the sequences are ordered, we find that the convergence holds in $\ell^{\infty}(\mathbb{N})$. Indeed, we first notice that we have
\begin{itemize}
\item $\forall j\ge 1$, $a_{n,i,\sigma_{n,i}(j)}^2\le \frac{a_{n,i,\sigma_{n,i}(1)}^2+\cdots+a_{n,i,\sigma_{n,i}(j)}^2}{j}\le \frac{1}{j}$,
\item $\forall j\ge 1$, $\left|a_{n,i,\sigma_{n,i}(j)}-a_{\infty,i,j}\right|\to 0$.
\end{itemize}
Gathering these two facts entails the $\ell^{\infty}$-convergence.

\subsubsection{Terms with small influence}
In particular, letting
\begin{equation*}
  R_{n,i} \coloneq
  \begin{dcases}
    \sum_{j \in \mathbb{N}} (a_{n,i,\sigma_{n,i}(j)} - a_{\infty,i,j}) Z_{\sigma_{n,i}(j)}, & i \in I;
    \\ F_{n,i}, & i \not \in I.
  \end{dcases}
\end{equation*}
we find that $\tau(R_{n,i}) \to 0$ as $n \to \infty$.
Take $\vec{G} = (G_{k})$ a standard Gaussian vector.
Our first result characterizes the limit in law for elements of low influence following a combination of an invariance principle \cite{RotarPolylinear,MOO} together with our stability result \cref{th:stabilite-wiener-multidim}.
\begin{lemma}
  There exist polynomials $(T_{\infty,i})$ with $\mathrm{deg}(T_{\infty,i}) \leq K$ for all $i \in \mathbb{N}$ and $\mathrm{deg}(T_{\infty,i}) \leq 1$ for $i \in I$ such that
\begin{equation}\label{eq:initialisation:low-influence:convergence}
  \paren*{ R_{n,i} : i \in \mathbb{N} } \xrightarrow[n \to \infty]{\law} \paren*{T_{\infty,i}(\vec{G}) : i \in \mathbb{N} }.
\end{equation}
\end{lemma}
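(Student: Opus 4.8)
The plan is to transport the statement to the Gaussian setting, where it becomes an instance of the already-proved stability of Wiener chaoses. First I would record that $\tau(R_{n,i}) \to 0$ for every $i \in \mathbb{N}$: for $i \in I$ this is exactly the $\ell^{\infty}(\mathbb{N})$-convergence of the reordered coefficients obtained above, while for $i \notin I$ one has $R_{n,i} = F_{n,i}$ and the convergence is built into $1$-admissibility. All the $R_{n,i}$ being multilinear of degree at most $K$ over the orthonormal ensembles $\mathscr{X}_{j}$, I would replace each $\mathscr{X}_{j} = \{T_{0}X_{j}, \dots, T_{d}X_{j}\}$ by the Gaussian ensemble $\mathscr{G}_{j} \coloneq \{H_{0}(G_{j}), \dots, H_{d}(G_{j})\}$, which is again orthonormal with the same first two moments; this turns $R_{n,i}$ into a Gaussian polynomial $\widehat{R}_{n,i} \in \mathcal{W}_{\leq K}$, with $\widehat{R}_{n,i} \in \mathcal{W}_{\leq 1}$ for $i \in I$. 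The multilinear invariance principle of \cite{MOO} (see also \cite{RotarPolylinear}), applied in vector form to finitely many coordinates at a time, then yields for every $m \in \mathbb{N}$ and every $\psi \in \mathscr{C}_{b}(\mathbb{R}^{m})$ that
\begin{equation*}
  \Esp*{ \psi(R_{n,1}, \dots, R_{n,m}) } - \Esp*{ \psi(\widehat{R}_{n,1}, \dots, \widehat{R}_{n,m}) } \xrightarrow[n \to \infty]{} 0,
\end{equation*}
since $\max_{i \leq m}\tau(R_{n,i}) \to 0$ and the degrees stay bounded by $K$.

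Next I would control the Gaussian surrogates. Convergence in law of $(\vec{F}_{n})$ together with \cref{th:polynomials:convergence-moment} gives $C_{i} \coloneq \sup_{n}\norm{F_{n,i}}_{L^{2}(\prob)} < \infty$; since the bulk term $\sum_{j} a_{\infty,i,j} Z_{\sigma_{n,i}(j)}$ has $L^{2}(\prob)$-norm $\bigl(\sum_{j}a_{\infty,i,j}^{2}\bigr)^{1/2} \leq C_{i}$ by Fatou, and since Gaussianisation is $L^{2}$-isometric on multilinear polynomials, $\norm{\widehat{R}_{n,i}}_{L^{2}(\prob)} = \norm{R_{n,i}}_{L^{2}(\prob)} \leq 2C_{i}$. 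Hence $\{(\widehat{R}_{n,i})_{i} : n \in \mathbb{N}\}$ is bounded in $L^{2}(\prob)$ in each coordinate, so by \cref{th:wiener:convergence-in-law-moments} it is relatively compact for convergence of finite-dimensional marginals, and a standard diagonal extraction produces a subsequence along which $(\widehat{R}_{n,i})_{i} \xrightarrow{\law} (\widehat{R}_{\infty,i})_{i}$. \Cref{th:stabilite-wiener-multidim} then furnishes polynomials $(T_{\infty,i})_{i}$ of degree at most $K$ with $\law{(\widehat{R}_{\infty,i})_{i}} = \law{(T_{\infty,i}(\vec{G}))_{i}}$; and for $i \in I$, since the $\widehat{R}_{n,i}$ are centred Gaussian with uniformly bounded variance, their limit $\widehat{R}_{\infty,i}$ is Gaussian, so by the refinement of \cref{th:stabilite-wiener-multidim} asserting that Gaussian components of the limit lie in $\mathcal{W}_{1}$, one may take $\deg T_{\infty,i} \leq 1$. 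Combining this with the display of Step 1 and the fact that $\mathscr{C}_{b}$ test functions determine convergence in law yields \cref{eq:initialisation:low-influence:convergence} along this subsequence.

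Two points deserve comment. Because the reordering permutations $\sigma_{n,i}$ may keep reshuffling the covariances $\Esp*{R_{n,i}R_{n,i'}}$, the vector $(R_{n,i})_{i}$ need not converge in law along the whole sequence; one therefore argues up to extraction throughout the proof of $\mathsf{H}(1)$ and reconciles at the very end by using that $(\vec{F}_{n})$ converges, so that every subsequential limit is the actual one. The genuine difficulty is Step 1: one must invoke the invariance principle of \cite{MOO} jointly for a finite block of polynomials — either by first smoothing the $\mathscr{C}_{b}$ functions to $\mathscr{C}^{3}_{b}$ ones and running the Lindeberg swap coordinatewise, or by appealing directly to the multivariate formulations in \cite{MOO,RotarPolylinear} — and to check that the hypotheses (uniformly vanishing maximal influence over the block, uniform degree bound, equality of the first two moments of $\mathscr{X}_{j}$ and $\mathscr{G}_{j}$) are met, which they are by construction.
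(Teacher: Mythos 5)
Your proof follows essentially the same route as the paper: Gaussianise the low-influence multilinear forms via the invariance principle of \cite{MOO} and then invoke the stability of Wiener chaoses (\cref{th:stabilite-wiener-multidim}); the paper runs the comparison in the Wasserstein metric (\cref{th:wasserstein-polynomial} together with the quantitative multivariate bound of \cite[Thm.~7.1]{NPR}), whereas you use smoothed test functions plus an explicit $L^{2}$-boundedness and extraction step --- a legitimate variant that in fact makes explicit a point the paper leaves implicit, namely that the Gaussian surrogates converge in law before the stability theorem is applied. The one imprecision is your choice of surrogate: replacing $T_{k}X_{j}$ by $H_{k}(G_{j})$ only places $\widehat{R}_{n,i}$ in $\mathcal{W}_{\leq K}$ if you invoke the bound on the total ensemble degree $\sum_{j}\alpha_{j}$ inherited from the $\vec{X}$-degree of $F_{n,i}$; with only the multilinear bound (at most $K$ non-constant ensemble members per monomial) the surrogate can have degree up to $dK$, which would weaken the claimed bound $\deg T_{\infty,i}\leq K$. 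The paper sidesteps this by substituting an independent standard Gaussian $G_{j}$ for each variable $Z_{j}$ of the elementary multilinear form, which keeps the degree at most $K$ outright; with that substitution (or the total-degree remark) your argument yields exactly the stated lemma, up to the subsequence caveat you already acknowledge and which is consistent with the paper's own ``up to non-relabelled extraction'' convention.
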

\begin{proof}
  Take polynomials $(T_{n,i})$ such that $R_{n,i} = T_{n,i}(\vec{Z})$.
  By \cref{th:stabilite-wiener-multidim}, there exist polynomials $T_{\infty,i}$ as in the claim such that
  \begin{equation}\label{eq:low-influence:gaussian}
    \paren*{ T_{n,i}(\vec{G}) : i \in I } \xrightarrow[n \to \infty]{\law} \paren*{ T_{\infty,i}(\vec{G}) : i \in I  }
  \end{equation}
  Take $M \in \mathbb{N}^{*}$.
  Consider the Wasserstein distance $\mathbf{W}_{2}$ defined in \cref{eq:def-wasserstein}, by the triangle inequality
  \begin{equation}\label{eq:low-influence:wasserstein}
    \mathbf{W}_{2}\paren*{ \begin{pmatrix}R_{n,1}\\\dots\\R_{n,M}\end{pmatrix}, \begin{pmatrix}T_{\infty,1}(\vec{G})\\\dots\\T_{\infty,M}(\vec{G})\end{pmatrix} } \leq
    \mathbf{W}_{2}\paren*{ \begin{pmatrix}R_{n,1}\\\dots\\R_{n,M}\end{pmatrix}, \begin{pmatrix}T_{n,1}(\vec{G})\\\dots\\T_{n,M}(\vec{G})\end{pmatrix} } +
    \mathbf{W}_{2}\paren*{ \begin{pmatrix}T_{n,1}(\vec{G})\\\dots\\T_{n,M}(\vec{G})\end{pmatrix}, \begin{pmatrix}T_{\infty,1}(\vec{G})\\\dots\\T_{\infty,M}(\vec{G})\end{pmatrix} }.
  \end{equation}
  Since the Wasserstein distance $\mathbf{W}_{2}$ metrizes the convergence in law for random polynomials vectors in $\mathbb{R}^{M}$ (see \cref{th:wasserstein-polynomial}), the second term in the right hand side vanishes as $n \to \infty$.
  On the other hand by the multivariate version of the invariance principle from \cite{MOO}, see \cite[Thm.~7.1]{NPR}, the first term is controlled, up to a constant, by $\max_{i=1,\dots,M} \tau(T_{n,i})$.
  Since by construction this influence goes to $0$, we conclude that the left-hand side in \cref{eq:low-influence:wasserstein} goes to $0$.
  Since this holds for an arbitrary $M \in \mathbb{N}^{*}$, the proof is complete.
\end{proof}

\subsubsection{The \texorpdfstring{$\ell^{2}$}{l2}-term}
Here, our goal is to understand the asymptotic as $n \to \infty$ of the linear terms
\begin{equation*}
  L_{n,i} \coloneq \sum_{j \in \mathbb{N}} a_{\infty,i,j} Z_{\sigma_{n,i}(j)}.
\end{equation*}
Since the coefficients $a_{\infty,i,j}$'s are already independent of $n$, we study the limit in law of $(Z_{\sigma_{n,i}(j)})$.
Define
\begin{equation*}
  C_{n}((i,j), (\tilde\imath, \tilde\jmath)) \coloneq \Esp*{ Z_{\sigma_{n,i}(j)} Z_{\sigma_{n,\tilde\imath}(\tilde\jmath)} } \in \{0,1\}.
\end{equation*}
Up to non-relabelled extractions, we have that $C_{n}$ converges pointwise to some $C_{\infty}$.
Define the relation $(i,j) \sim (\tilde\imath,\tilde\jmath)$ provided $C((i,j),(\tilde\imath,\tilde\jmath)) = 1$, which is equivalent to: for $n$ large enough $Z_{\sigma_{n,i}(j)} = Z_{\sigma_{n,\tilde\imath}(\tilde\jmath)}$.
From this representation, we see that $\sim$ is an equivalence relation.
Up to non-relabelled extraction, we can consider $\mu_{\infty,i,j}$ the limiting distribution of $Z_{\sigma_{n,i}(j)}$, which, by definition of $\sim$, only depends on the equivalence class of $(i,j)$.
Since the $Z_{i}$'s are independent and $\mathscr{Z} \coloneq \set*{ \Law*{Z_{i}} : i \in \mathbb{N} }$ is a finite set, we actually have that $\mu_{\infty,i,j} \in \mathscr{Z}$
Let us consider a sequence of independent random variables
\begin{equation*}
  \paren*{ Z_{\infty,C} : C \in (\mathbb{N}^{2} \setminus \sim) },
\end{equation*}
such that $\Law*{Z_{\infty,C}} = \mu_{\infty,i,j}$ for any $(i,j) \in C$.
We build a $\mathbb{N}^{2}$-indexed sequence from there by setting
\begin{equation*}
  Z_{\infty,i,j} \coloneq Z_{\infty,C}, \qquad (i,j) \in C,\, C \in (\mathbb{N}^{2} \setminus \sim).
\end{equation*}
Our construction guarantees that
\begin{equation}\label{eq:l2:convergence-Z}
  (Z_{\sigma_{n,i}(j)} : (i,j) \in \mathbb{N}^{2}) \xrightarrow[n \to \infty]{\law} (Z_{\infty,i,j} : (i,j) \in \mathbb{N}^{2}).
\end{equation}
Define
\begin{equation*}
  L_{\infty,i} \coloneq \sum_{j \in \mathbb{N}} a_{\infty,i,j} Z_{\infty,i,j},
\end{equation*}
which exists as a series converging in $L^{2}(\prob)$.
Indeed, since $\sigma_{n,i}$ is a bijection, it is impossible to have $(i,j) \sim (i, \tilde\jmath)$ for $j \ne \tilde\jmath$.
Thus all the terms in the series are actually orthogonal and the series is convergent since $\sum_{j \in \mathbb{N}} a_{\infty,i,j}^{2} \leq 1$.
We can now characterize the limit in law of the $L_{n,i}$'s.
\begin{lemma}
  With the above notations
  \begin{equation*}
    (L_{n,i} : i \in \mathbb{N}) \xrightarrow[n \to \infty]{\law} (L_{\infty,i} : i \in \mathbb{N} ).
  \end{equation*}
\end{lemma}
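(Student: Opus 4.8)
The plan is to prove convergence of all finite-dimensional marginals through the Wasserstein distance $\mathbf{W}_{2}$ of \cref{eq:def-wasserstein}, by a truncation argument based on \cref{eq:l2:convergence-Z}. Fix a finite set $J \subset \mathbb{N}$; since $\mathbf{W}_{2}$-convergence implies convergence in law (e.g.\ by \cite[Thm.~6.9]{Villani}), it suffices to show that $\mathbf{W}_{2}\paren*{(L_{n,i})_{i \in J}, (L_{\infty,i})_{i \in J}} \to 0$. For $M \in \mathbb{N}^{*}$ I would introduce the truncations $L_{n,i}^{(M)} \coloneq \sum_{j \le M} a_{\infty,i,j} Z_{\sigma_{n,i}(j)}$ and $L_{\infty,i}^{(M)} \coloneq \sum_{j \le M} a_{\infty,i,j} Z_{\infty,i,j}$, and split by the triangle inequality:
\begin{equation*}
  \mathbf{W}_{2}\paren*{(L_{n,i})_{i \in J}, (L_{\infty,i})_{i \in J}} \le \mathbf{W}_{2}\paren*{(L_{n,i})_{i\in J}, (L_{n,i}^{(M)})_{i\in J}} + \mathbf{W}_{2}\paren*{(L_{n,i}^{(M)})_{i\in J}, (L_{\infty,i}^{(M)})_{i\in J}} + \mathbf{W}_{2}\paren*{(L_{\infty,i}^{(M)})_{i\in J}, (L_{\infty,i})_{i\in J}}.
\end{equation*}

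For the two outer terms I would use that, for a fixed $i$, the summands $Z_{\sigma_{n,i}(j)}$, $j \ge 1$, are pairwise orthogonal: they involve pairwise distinct indices since $\sigma_{n,i}$ is a bijection, hence are independent, centered and of unit variance; and likewise the $Z_{\infty,i,j}$, $j \ge 1$, are pairwise orthogonal, here because $(i,j) \not\sim (i,\tilde\jmath)$ whenever $j \ne \tilde\jmath$ (the observation already used to make $L_{\infty,i}$ a well-defined $L^{2}$-series). Coupling $(L_{n,i})_{i\in J}$ with $(L_{n,i}^{(M)})_{i\in J}$ on the same probability space then gives
\begin{equation*}
  \mathbf{W}_{2}\paren*{(L_{n,i})_{i\in J}, (L_{n,i}^{(M)})_{i\in J}}^{2} \le \sum_{i \in J} \norm*{L_{n,i} - L_{n,i}^{(M)}}_{L^{2}(\prob)}^{2} = \sum_{i \in J} \sum_{j > M} a_{\infty,i,j}^{2},
\end{equation*}
and the identical bound holds for the third term; since $\sum_{j} a_{\infty,i,j}^{2} \le 1$ for every $i$, this quantity tends to $0$ as $M \to \infty$, \emph{uniformly in $n$}.

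For the middle term I would fix $M$ and observe that $(L_{n,i}^{(M)})_{i\in J}$ is the image of the finite random vector $(Z_{\sigma_{n,i}(j)} : i \in J,\, j \le M)$ under a fixed linear map $\mathbb{R}^{\abs{J} \times M} \to \mathbb{R}^{\abs{J}}$; by \cref{eq:l2:convergence-Z} this vector converges in law to $(Z_{\infty,i,j} : i \in J,\, j \le M)$, so the continuous mapping theorem yields $(L_{n,i}^{(M)})_{i\in J} \xrightarrow[n \to \infty]{\law} (L_{\infty,i}^{(M)})_{i\in J}$. Moreover, by the orthogonality within each coordinate, $\Esp*{\sum_{i \in J} (L_{n,i}^{(M)})^{2}} = \sum_{i \in J} \sum_{j \le M} a_{\infty,i,j}^{2}$, which is constant in $n$ and equals the corresponding second moment of the limiting vector; hence convergence in law together with convergence of the second moments gives $\mathbf{W}_{2}\paren*{(L_{n,i}^{(M)})_{i\in J}, (L_{\infty,i}^{(M)})_{i\in J}} \to 0$ by \cite[Thm.~6.9]{Villani}. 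Combining the three estimates: given $\varepsilon > 0$, choose $M$ so that the outer terms are at most $\varepsilon$ for all $n$, then let $n \to \infty$ to get $\limsup_{n} \mathbf{W}_{2}\paren*{(L_{n,i})_{i \in J}, (L_{\infty,i})_{i \in J}} \le 2\varepsilon$; letting $\varepsilon \to 0$ and then varying $J$ concludes. I do not expect a real obstacle here — this is a routine truncation-plus-tightness scheme — the only delicate point being the uniform-in-$n$ tail control, which rests precisely on the orthogonality, inside each coordinate $i$, of the reshuffled families $(Z_{\sigma_{n,i}(j)})_{j}$ and $(Z_{\infty,i,j})_{j}$; this is exactly why the variables were re-indexed by the bijections $\sigma_{n,i}$ and why $\sim$ cannot identify two distinct $j$'s within a single coordinate.
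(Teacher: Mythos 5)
Your proposal is correct and follows essentially the same route as the paper: truncate the series at a fixed level, control the two tail terms in $\mathbf{W}_{2}$ uniformly in $n$ via the within-coordinate orthogonality of the $(Z_{\sigma_{n,i}(j)})_{j}$ and $(Z_{\infty,i,j})_{j}$ (the coefficients $a_{\infty,i,j}$ being independent of $n$), and handle the truncated finite-dimensional vectors via \cref{eq:l2:convergence-Z}, the triangle inequality, and the fact that $\mathbf{W}_{2}$ metrizes convergence in law here. Your middle-term justification (continuous mapping plus convergence of second moments and \cite[Thm.~6.9]{Villani}) is just an unpacked version of the paper's appeal to \cref{th:wasserstein-polynomial}, so there is no substantive difference.
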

\begin{proof}
  For an integer $Q$ and $n \in \mathbb{N}$, write
\begin{equation*}
  L_{n,i}^{\leq Q} \coloneq \sum_{j=0}^{Q} a_{\infty,i,j} Z_{\sigma_{n,i}(j)}, \qquad \text{and} \qquad
L_{n,i}^{> Q} \coloneq \sum_{j=Q+1}^{\infty} a_{\infty,i,j} Z_{\sigma_{n,i}(j)}.
\end{equation*}
We use a similar notation $L_{\infty,i}^{\leq Q}$ and $L_{\infty,i}^{>Q}$.
Let $M \in \mathbb{N}^{*}$ and $\varepsilon > 0$.
Take $Q$ such that
\begin{equation*}
  \max_{i =1,\dots,M} \sum_{j=Q+1}^{\infty} a_{\infty,i,j}^{2} \leq \varepsilon.
\end{equation*}
This ensures that
\begin{equation}\label{eq:l2:small-variance}
  \sup_{n \in \mathbb{N}} \max_{i=1,\dots,M} \Var*{ L_{n,i}^{>Q} } \leq \varepsilon, \qquad \text{and} \qquad \max_{i=1,\dots,M} \Var*{L_{\infty,i}^{>Q}} \leq \varepsilon.
\end{equation}
Consider the Wasserstein distance $\mathbf{W}_{2}$ defined in \cref{eq:def-wasserstein}.
By \cref{eq:l2:small-variance}, we find that, uniformly in $n$,
\begin{equation*}
  \mathbf{W}_{2}\paren*{ \begin{pmatrix}L_{n,1}\\\dots\\L_{n,M}\end{pmatrix}, \begin{pmatrix}L_{n,1}^{\leq Q}\\\dots\\L_{n,M}^{\leq Q}\end{pmatrix}} + \mathbf{W}_{2}\paren*{ \begin{pmatrix}L^{\leq Q}_{\infty,1}\\\dots\\L_{\infty,M}^{\leq Q}\end{pmatrix}, \begin{pmatrix}L_{\infty,1}\\\dots\\L_{\infty,M}\end{pmatrix} } \leq 2 M \varepsilon.
\end{equation*}
On the other hand, \cref{eq:l2:convergence-Z} implies that
\begin{equation*}
  \mathbf{W}_{2}\paren*{\begin{pmatrix}L_{n,1}^{\leq Q}\\\dots\\L_{n,M}^{\leq Q}\end{pmatrix}, \begin{pmatrix}L_{\infty,1}^{\leq Q}\\\dots\\L_{\infty,M}^{\leq Q}\end{pmatrix} } \xrightarrow[n \to \infty]{} 0.
\end{equation*}
Since $\varepsilon$ is arbitrary, we actually find that $(L_{n,i} : i=1,\dots,M)$ converges in Wasserstein distance to $(L_{\infty,i} : i=1,\dots,M)$.
Since the Wasserstein distance metrizes the convergence in law \cref{th:wasserstein-polynomial}, and that the convergence holds for all $M \in \mathbb{N}^{*}$, this completes the proof.
\end{proof}

\subsubsection{Asymptotic independence and joint convergence}
So far we have established the convergence of the low influence $R_{n,i}$'s and the linear $L_{n,i}$'s separately.
Let us show that the two sequences are asymptotically independent.
\begin{lemma}\label{th:initialisation:asymptotic-independence}
We further assume that $\vec{G}$ and $(Z_{\infty,i,j})$ are independent.
Then,
\begin{equation*}
  \paren*{ L_{n,i}, R_{n,i} : i \in \mathbb{N} } \xrightarrow[n \to \infty]{\law} \paren*{ L_{\infty,i}, R_{\infty,i} : i \in \mathbb{N} }.
\end{equation*}
\end{lemma}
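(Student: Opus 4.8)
The plan is to fix $M \in \mathbb{N}^{*}$ and prove the convergence of the finite marginal $(L_{n,i}, R_{n,i} : i \leq M)$; since all quantities involved are polynomials of bounded degree in independent variables, \cref{th:wasserstein-polynomial} lets me argue throughout with the Wasserstein distance $\mathbf{W}_{2}$, and letting $M \to \infty$ yields the full statement. First I would reduce to a truncated version of the $\ell^{2}$-terms exactly as in the previous lemma: given $\varepsilon > 0$, choose $Q$ with $\max_{i \leq M}\sum_{j > Q} a_{\infty,i,j}^{2} \leq \varepsilon$, set $L_{n,i}^{\leq Q} \coloneq \sum_{j=0}^{Q} a_{\infty,i,j} Z_{\sigma_{n,i}(j)}$ and $L_{\infty,i}^{\leq Q}$ analogously, so that (as in \cref{eq:l2:small-variance}) $\mathbf{W}_{2}(L_{n,i}, L_{n,i}^{\leq Q}) \leq \sqrt{\varepsilon}$ uniformly in $n$, and the same bound holds at the limit. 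It then suffices to prove $(L_{n,i}^{\leq Q}, R_{n,i} : i \leq M) \xrightarrow{\law} (L_{\infty,i}^{\leq Q}, R_{\infty,i} : i \leq M)$ and let $\varepsilon \to 0$.

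The heart of the argument — and the step I expect to be the main obstacle — is a decoupling based on the smallness of the influences of the $R_{n,i}$'s. Let $\mathcal{I}_{n}$ be the finite set of blocks of the orthonormal ensembles on which at least one of $L_{n,1}^{\leq Q}, \dots, L_{n,M}^{\leq Q}$ depends, so that $\lvert \mathcal{I}_{n}\rvert \leq M(Q+1)$, independently of $n$. Split $R_{n,i} = R_{n,i}^{(0)} + R_{n,i}^{(1)}$, where $R_{n,i}^{(0)}$ collects the multilinear monomials of $R_{n,i}$ supported outside $\mathcal{I}_{n}$ and $R_{n,i}^{(1)}$ the remaining ones. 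A union bound over $\mathcal{I}_{n}$ gives $\norm{R_{n,i}^{(1)}}_{L^{2}(\prob)}^{2} \leq \sum_{j \in \mathcal{I}_{n}} \mathrm{Inf}_{j}(R_{n,i}) \leq M(Q+1)\max_{i \leq M}\tau(R_{n,i}) \to 0$, where $\mathrm{Inf}_{j}$ is the influence of the $j$-th block and where we use that $\tau(R_{n,i}) \to 0$ for every $i \leq M$ (already established, both for $i \in I$ via the $\ell^{\infty}$-convergence of coefficients, and for $i \notin I$ since then $R_{n,i} = F_{n,i}$ has vanishing influence). Hence $R_{n,i} = R_{n,i}^{(0)} + o_{L^{2}(\prob)}(1)$, so I may replace $R_{n,i}$ by $R_{n,i}^{(0)}$ without changing the limiting law.

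Now for each fixed $n$ the vector $(L_{n,i}^{\leq Q} : i \leq M)$ is a function of the variables sitting in the blocks of $\mathcal{I}_{n}$, while $(R_{n,i}^{(0)} : i \leq M)$ is a function of the variables in the complementary blocks; since distinct blocks are independent, these two random vectors are \emph{independent} for every $n$. On the one hand $(L_{n,i}^{\leq Q})_{i \leq M} \to (L_{\infty,i}^{\leq Q})_{i \leq M}$ in law by \cref{eq:l2:convergence-Z}; on the other hand $(R_{n,i}^{(0)})_{i \leq M}$ and $(R_{n,i})_{i \leq M}$ have the same limit law, namely $(R_{\infty,i})_{i \leq M}$ by \cref{eq:initialisation:low-influence:convergence}. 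Since a sequence of product laws converges weakly to the product of the two weak limits (e.g.\ by factorizing characteristic functions), $(L_{n,i}^{\leq Q}, R_{n,i}^{(0)} : i \leq M)$ converges in law to $\Law{(L_{\infty,i}^{\leq Q})_{i \leq M}} \otimes \Law{(R_{\infty,i})_{i \leq M}}$, which, by the standing assumption that $\vec{G}$ and $(Z_{\infty,i,j})$ are independent, is exactly $\Law{(L_{\infty,i}^{\leq Q}, R_{\infty,i} : i \leq M)}$. Undoing the truncation by letting $\varepsilon \to 0$, and then letting $M \to \infty$, completes the proof. The only genuinely delicate point is the decoupling: one must observe that bounded-cardinality-of-$\mathcal{I}_{n}$ together with $\tau(R_{n,i}) \to 0$ suffices to make $R_{n,i}^{(1)}$ uniformly $L^{2}$-negligible, after which exact (pre-limit) independence comes for free from block-independence; everything else is a routine Wasserstein truncation.
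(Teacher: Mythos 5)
Your proof is correct, and it reaches the crucial decoupling by a mechanism different from the paper's. The paper also truncates at level $Q$ and exploits that $(L^{\leq Q}_{n,i})_{i \leq M}$ involves only finitely many blocks, each of vanishing influence in the $R_{n,i}$'s; but it then runs a replacement (hybrid) argument with smooth bounded test functions $\Phi,\Psi$: the $(Q+1)(M+1)$ shared variables are swapped inside $R_{n,i}$ for independent copies $\hat{R}_{n,i}$, at a cost controlled by $\max_{i\leq M}\tau(R_{n,i})$, after which $\Esp*{\Phi((L^{\leq Q}_{n,i})_{i})\Psi((\hat{R}_{n,i})_{i})}$ factorizes exactly and one deduces asymptotic independence, hence joint convergence using the two marginal limits and the assumed independence of $\vec{G}$ and $(Z_{\infty,i,j})$. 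You instead perform an $L^{2}$ surgery on $R_{n,i}$ itself: discarding the monomials meeting the finitely many blocks $\mathcal{I}_{n}$, bounding the discarded part by the union bound $\norm{R^{(1)}_{n,i}}_{L^{2}(\prob)}^{2}\leq \sum_{j\in\mathcal{I}_{n}}\mathrm{Inf}_{j}(R_{n,i})\leq M(Q+1)\,\tau(R_{n,i})\to 0$ (valid by orthonormality of the multilinear monomials across blocks), and obtaining \emph{exact} independence of $(L^{\leq Q}_{n,i})_{i\leq M}$ and $(R^{(0)}_{n,i})_{i\leq M}$ at every finite $n$; the joint limit is then the product law by an elementary characteristic-function argument, Slutsky restores $R_{n,i}$, and the identification of the product with $\Law{(L^{\leq Q}_{\infty,i},R_{\infty,i})_{i\leq M}}$ uses, exactly as in the paper, only the standing independence of $\vec{G}$ and $(Z_{\infty,i,j})$, together with \cref{eq:l2:convergence-Z} and \cref{eq:initialisation:low-influence:convergence} for the marginals. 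Both arguments consume the same quantitative input (boundedly many shared blocks, vanishing maximal influence) and finish with the same Wasserstein/truncation bookkeeping in $Q$, $\varepsilon$ and $M$ via \cref{eq:l2:small-variance} and \cref{th:wasserstein-polynomial}; what yours buys is a pre-limit independence statement and the avoidance of the Taylor/swapping estimate, at the (small) price of invoking the orthogonality of the multilinear expansion, whereas the paper's swapping argument never modifies $R_{n,i}$ and works directly at the level of test functions.
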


\begin{proof}
Fix $M \in \mathbb{N}^{*}$ take $\Phi, \Psi \in \mathscr{C}^{\infty}_{b}(\mathbb{R}^{M})$.
As above take $\varepsilon > 0$ and $Q \in \mathbb{N}^{*}$ such that \cref{eq:l2:small-variance} holds.
Thus by a Taylor expansion, we find that
\begin{equation}\label{eq:joint-convergence:small-variance}
  \Esp*{ \Phi((L_{n,i}))_{i \leq M} \Psi((R_{n,i})_{i \leq M}) } = \Esp*{ \Phi((L_{n,i}^{\leq Q})_{i\leq M} ) \Psi((R_{n,i})_{i\leq M}) } + O(M \varepsilon).
\end{equation}
The random variables $(L_{n,i}^{\leq Q})_{i \leq M}$ only depend on the $l \coloneq (Q+1)(M+1)$ random variables $(Z_{\sigma_{n,i}(j)} : i \leq M, j \leq Q)$.
Let us write $(\hat{R}_{n,i})$ for the corresponding random variables where these $l$ random variables are replaced by independent copies.
\begin{equation}\label{eq:joint-convergence:small-influence}
  \Esp*{ \Phi((L_{n,i}^{\leq Q}))_{i \leq M} \Psi((R_{n,i})_{i \leq M}) } = \Esp*{ \Phi((L_{n,i}^{\leq Q})_{i\leq M} ) \Psi((\hat{R}_{n,i})_{i\leq M}) } + O(M\tau_{n}),
\end{equation}
where $\tau_{n} \coloneq \max_{i=1,\dots,M} \tau(R_{n,i}) \to 0$ as $n \to \infty$.
Since the two random vectors on the right-hand side are now independent, combining \cref{eq:joint-convergence:small-influence,eq:joint-convergence:small-influence} we find that
\begin{equation*}
  \begin{split}
    \Esp*{ \Phi((L_{n,i}))_{i \leq M} \Psi((R_{n,i})_{i \leq M}) } &= \Esp*{ \Phi((L_{n,i}^{\leq Q})_{i\leq M} ) } \Esp*{ \Psi((\hat{R}_{n,i})_{i\leq M}) } + O(M (\varepsilon + \tau_{n})) 
                                                                 \\&= \Esp*{ \Phi((L_{n,i})_{i \leq M}) } \Esp*{ \Psi((R_{n,i})_{i \leq M}) } + O(M(\varepsilon + \tau_{n})).
  \end{split}
\end{equation*}
  Since $\varepsilon$ was arbitrary, we obtain, by letting $n \to \infty$, the asymptotic independence for all vector of finite length $M$.
  Since $M$ is also arbitrary, we conclude.
\end{proof}

\subsubsection{Conclusion}
By the continuous mapping theorem, \cref{th:initialisation:asymptotic-independence} ensures that
\begin{equation*}
  (F_{n,i} : i \in \mathbb{N}) \xrightarrow[n \to \infty]{\law} (L_{\infty,i} + R_{\infty,i} : i \in \mathbb{N}),
\end{equation*}
where we recall that the Gaussian vector $\vec{G}$ from which the $R_{\infty,i}$'s are constructed, and the array $(Z_{\infty,i,j})_{i,j}$ from which the $L_{\infty,i}$ are constructed are independent. 
To complete the proof, we now construct explicitly $\vec{Y}$ and the $Q_{i}$'s such that the conclusion of \cref{th:stabilite-general-multidim} holds. 
Take any bijection $\varphi \colon \mathbb{N}^{2} \to \mathbb{N}$.
Define $\vec{Y} = (Y_{k})$ with
\begin{equation*}
  Y_{2k} \coloneq G_{k}, \qquad \text{and} \qquad
Y_{2k+1} \coloneq Z_{\infty,\varphi^{-1}(k)}.
\end{equation*}
Recall that $R_{\infty,i} = T_{\infty,i}(\vec{G})$ for some polynomials $T_{\infty,i}$ which we can immediately write as $Q_{i,1}(\vec{Y})$.
Defining $\mathbb{N}_{i} \coloneq \varphi(\{i\} \times \mathbb{N})$, we have
\begin{equation*}
  L_{\infty,i} = \sum_{k \in \mathbb{N}_{i}} a_{\infty,\varphi^{-1}(k)} Y_{2k+1} \coloneq Q_{i,2}(\vec{Y}).
\end{equation*}
Setting $Q_{i} \coloneq Q_{i,1} + Q_{i,2}$ we conclude.

\subsection{Induction step}
Let us assume $\mathsf{H}(d)$ for some $d \in \mathbb{N}$, and let us show $\mathsf{H}(d+1)$.
Take $(\vec{F}_{n})$ a $(d+1)$-admissible sequence.
The idea is to build a $d$-admissible vector out of $\vec{F}_{n}$ and use our induction hypothesis.
Let $I_{0} \coloneq \set*{ i \in \mathbb{N} : \tau(F_{n,i}) \to 0 }$.
Since elements with vanishing influence are for free in the definition of admissibility we keep $F_{n,i}$ as is for $i \in I_{0}$.
\subsubsection{Decomposing the terms of large influence}
For $i \not\in I_{0}$, we consider permutations $\sigma_{n,i} \colon \mathbb{N} \to \mathbb{N}$ that orders the influences, namely
\begin{equation*}
  \tau_{\sigma_{n,i}(1)}(F_{n,i}) \geq \tau_{\sigma_{n,i}(2)}(F_{n,i}) \geq \dots
\end{equation*}
Our strategy consists in successively removing variables with large influence.
To that extent, define
\begin{equation*}
  \mathscr{J}_{n,i,l} \coloneq \set*{ J \subset \mathbb{N} : \forall k \in \{1, \dots,l-1\},\sigma_{n,i}(k) \not\in J, \ \text{and}\ \sigma_{n,i}(l) \in J };
\end{equation*}
In other words, $\mathscr{J}_{n,i,l}$ is the set of all subsets where the $l$-th largest indices, in terms of influence, appears but not the indices with larger influence.
It is thus natural to define the polynomial with the $l$-th influence removed
\begin{equation*}
  R_{n,i,l} \coloneq \sum_{J \in \mathscr{J}_{n,i,l}} a_{J} Z_{J \setminus \sigma_{n,i}(l)};
\end{equation*}
  as well as the reminder
\begin{align*}
S_{n,i,p} \coloneq \sum_{J \in \cap_{l \leq p} \mathscr{J}_{n,i,l}^{C}} a_{J} Z_{J}.
\end{align*}
By this construction, we find that for all $p \in \mathbb{N}^{*}$
\begin{equation}\label{eq:induction:decomposition-influence}
  F_{n,i} = Z_{\sigma_{n,i}(1)} R_{n,i,1} + \dots + Z_{\sigma_{n,i}(p)} R_{n,i,p} + S_{n,i,p}.
\end{equation}
Since we remove the $p$-th first largest influence and that $F_{n,i}$ is normalized, we also find that
\begin{equation}\label{eq:induction:reminder-small-influence}
  \tau(S_{n,i,p}) \leq O(1/p), \qquad \text{uniformly in $n$ and $i$}.
\end{equation}

\subsubsection{The admissible vector and its convergence}
Take $(p_{n})$ be an increasing sequence of integers converging to $\infty$, to be specified later.
By \cref{eq:induction:reminder-small-influence}, we find that $\tau(S_{n,i,p_{n}}) \to 0$.
On the other hand, the $Z_{\sigma_{n,i}(j)}$'s are polynomials of degree $\leq d$ --- actually, they are of degree exactly $1$.
Similarly, all the $R_{n,i,j}$'s are, by construction, of degree at most $d$.
Thus, any vector containing them is a $d$-admissible vector.
To be concrete, choose a bijection $\varphi \colon \mathbb{N}^{*} \times \mathbb{Z} \to \mathbb{N}$.
Consider the array $A_{n} = (A_{n}(i,j))_{i \in \mathbb{N}^{*}, j \in \mathbb{Z}}$ such that
\begin{align*}
  & A_{n}(i, 0) \coloneq S_{n,i,p_{n}};
\\& A_{n}(i, j) \coloneq R_{n,i,j}, \qquad j \in \mathbb{N}^{*};
\\& A_{n}(i, j) \coloneq Z_{\sigma_{n,i}(j)}, \qquad j \in \mathbb{Z}_{-}^{*};
\end{align*}
and define the vector
\begin{equation*}
  \vec{V}_{n} \coloneq (A_{n}(\varphi^{-1}(k)) : k \in \mathbb{N}).
\end{equation*}
Thus, $(\vec{V}_{n})$ is $d$-admissible and by the induction hypothesis, there exist $\tilde{Q}_{i}$ of degree at most $d$ and a vector $\tilde{Y}$ such that
\begin{equation*}
  \vec{V}_{n} \xrightarrow[n \to \infty]{\law} \vec{V}_{\infty} \coloneq (\tilde{Q}_{i}(\vec{Y})).
\end{equation*}

\subsubsection{Identification of the limit of \texorpdfstring{$\vec{F}_{n}$}{Fn}}
We can subsequently define
\begin{align*}
  & Z_{\infty,i,j} \coloneq V_{\infty,\varphi(i,-j)};
\\& R_{\infty,i,j} \coloneq V_{\infty,\varphi(i,j)};
\\& S_{\infty,i} \coloneq V_{\infty,\varphi(i,0)};
\\& F_{\infty,i,j} \coloneq Z_{\infty,i,j} R_{\infty,i,j}.
\end{align*}
Since we are working with polynomials, by \cref{th:polynomials:convergence-moment}, we find that for all $i \in \mathbb{N}$, the terms of $(F_{\infty,i,j})_{j}$ are orthogonal and satisfy
\begin{equation*}
  \sum_{j=1}^{\infty} \Esp*{F_{\infty,i,j}^{2}} \leq 1.
\end{equation*}
In particular, we find that the following random variables is well-defined
\begin{equation*}
  F_{\infty,i} \coloneq \sum_{j=1}^{\infty} F_{\infty,i,j} + S_{\infty,i},
\end{equation*}
since the series converges in $L^{2}(\prob)$.
Since the space of polynomials is also closed in $L^{2}(\prob)$ \cref{th:polynomials:closed-L2}, we find that $F_{\infty,i} = Q_{i}(\vec{Y})$ for some $Q_{i}$ of degree at most $d$.

\subsubsection{Conclusion}
To conclude, let us show the following.
\begin{equation*}
  \vec{F}_{n} \xrightarrow[n \to \infty]{\law} \vec{F}_{\infty}.
\end{equation*}

Again we need to show that despite the infinite series, the infinite series is not a problem.
For this we use a truncation argument already presented in the initialisation case that we do not repeat.

\printbibliography%
\end{document}